\newcommand{\Title}{Rayleigh Random Flights on the Poisson line SIRSN}
\newcommand{\Date}{\today}
\definecolor{linkcolor}{named}{Maroon}
\definecolor{citecolor}{named}{PineGreen}
\definecolor{urlcolor}{named}{RoyalPurple}
\definecolor{okcolor}{named}{OliveGreen}
\definecolor{alertcolor}{named}{BrickRed}
\newcommand{\length}{\operatorname{len}}
\newcommand{\sdist}{\operatorname{dist}_\text{signed}}
\newcommand{\Diagonal}{\Delta}
\newcommand{\Equiv}{\operatorname{\mathcal{E}}}
\newcommand{\EquivClass}{\operatorname{\mathfrak{E}}}
\renewcommand{\Law}[1]{\operatorname{Law}(#1)}
\newcommand{\Line}{\operatorname{\mathcal{L}}}
\newcommand{\Network}{\operatorname{\mathcal{N}}}
\newcommand{\Route}{\operatorname{\mathcal{R}}}
\newcommand{\RRF}{RRF\xspace}
\newcommand{\SIRSN}{SIRSN\xspace}
\newcommand{\SIRSNRRF}{SIRSN-RRF\xspace}
\newcommand{\Similarity}{S}
\newcommand{\StateSpace}{\left(\Pi\times\Pi\right)\setminus\Diagonal}
\newcommand{\Zenv}{\Psi}
\newcommand{\leftrightsquigarrow}{\leftrightarrow}
\newcommand{\ArcAngle}{%
  \mathord{<\mspace{-9mu}\mathrel{)}\mspace{2mu}}%
}
\newtheorem{remark}{Remark}
\newtheorem{conjecture}{Conjecture}
\begin{document}

 \title{\Title}
 \author{%
\href{http://warwick.ac.uk/wsk}{Wilfrid S. Kendall} 
\\
\href{mailto:w.s.kendall@warwick.ac.uk}%
{\scriptsize w.s.kendall@warwick.ac.uk}%
}
 \date{\Date}
 \maketitle

\begin{abstract}\noindent
We study scale-invariant Rayleigh Random Flights (``\RRF'')
in random environments given by
planar Scale-Invariant Random Spatial Networks (``\SIRSN'')
based on speed-marked Poisson line processes.
A natural one-parameter family of such
\RRF (with scale-invariant dynamics) can be viewed as 
producing ``randomly-broken local geodesics'' on the \SIRSN; we
aim to 
shed some light on
a conjecture that 
a (non-broken) geodesic 
on such a \SIRSN
will never come to a complete stop \emph{en route}.
(If true, 
then all such geodesics can be represented as doubly-infinite sequences of sequentially connected line segments.
This would justify a natural procedure for computing geodesics.)
The family of these \RRF
(``{\SIRSNRRF}''),
is introduced \emph{via} a novel axiomatic theory of
abstract scattering representations for Markov chains
(itself of independent interest).
Palm conditioning
(specifically the Mecke-Slivnyak theorem for Palm probabilities of Poisson point processes)
and 
ideas from the 
ergodic theory of random walks in random environments
are used to show that
at a critical value of the parameter
the speed of the
scale-invariant {\SIRSNRRF} neither diverges to infinity nor tends to zero,
thus supporting the conjecture.

\end{abstract}

\bigskip
\noindent
Keywords and phrases:
\\
\textsc{ 
abstract scattering representation,
critical \SIRSNRRF,
Crofton cell;
delineated scattering process,
Dirichlet forms,
dynamical detailed balance,
environment viewed from particle,
ergodic theorem,
fibre process,
Kesten-Spitzer-Whitman range theorem,
Mecke-Slivnyak theorem,
Metro\-polis-Hastings acceptance ratio,
neighbourhood recurrence,
Palm conditioning,
Poisson line process,
\RRF (Rayleigh Random Flight),
RWRE
(Random Walk in a Random Environment),
\SIRSN
(Scale-invar\-iant random spatial network),
\SIRSNRRF.}

\bigskip
\noindent
AMS MSC 2010: Primary 60D05; Secondary 60G50, 37A50


 \section{Introduction}\label{sec:introduction}

 \citet{AldousGanesan-2013} and \citet{Aldous-2012}
 introduced
 the notion
 of 
 \emph{Scale-Invariant Random Spatial Networks} (\SIRSN),
motivated by the now ubiquitous navigational tool of 
online maps
 (\href{https://www.google.com/maps}{Google Maps}, 
 \href{https://www.bing.com/maps/}{Bing Maps}, 
 \href{https://www.openstreetmap.org}{OpenStreetMap}). 
 Informal experiments suggest that
  at normal scales
  the 
  route-finding algorithms
  of these map tools
  exhibit 
  scale-invariance 
 \citep[Section 1.5]{Aldous-2012},
  and the notion 
 of a \SIRSN
 was introduced
 to model this behaviour.
 A \SIRSN is a random mechanism that generates networks built out of 
 almost surely unique
random routes between specified locations,
required 
both 
to deliver scale-invariant statistics and to ensure 
considerable
 route-sharing between different routes.
 
Of course it is 
easy
 to produce random networks 
with
translation- and isotropy-invariant
statistics: 
 the challenge 
 is to find route-finding models which are also statistically invariant under change of scale.
 
 In particular \citet{AldousGanesan-2013} and \citet{Aldous-2012} introduced an elegant construction
 based on speed-marked Poisson lines
 (actually related to the ``random pattern of streets''
 described by \citealp[Plate 105]{Mandelbrot-1977}).
 Significant mathematical effort
 \citep{Kendall-2014c,Kahn-2015}
delivered
rigorous proof that this led to a random map,
 the geodesics of which did indeed provide a model 
 for the \SIRSN mechanism.
 However one issue is still unresolved:
 can the geodesics of this map always be expressed
 as sequentially connected doubly-infinite lists of
 segments from the Poisson lines? Colloquially
 this can be expressed as the conjecture that
 geodesics 
on such a \SIRSN
will never come to a complete stop \emph{en route}.
If this conjecture is true, then it justifies
the natural approximation of geodesics 
using finite-line approximations to the \SIRSN.

Motivated by these considerations,
this paper characterizes and describes 
a natural 
one-parameter family
of random flight processes
on the \SIRSN.
Such a process may be viewed as producing ``randomly-broken
local geodesics''.
We show that there is a critical value of the parameter 
at which the speed of the random process is neigh\-bourhood-recurrent, amounting to evidence in favour of the conjecture.
 
To fix ideas and notation, we summarize the
definition of 
 a general \SIRSN mechanism
 \citep{Aldous-2012}:
 \begin{defn}\label{def:SIRSN}
 A \SIRSN (based on a given probability space \((\Omega, \mathcal{F}, \mathbb{P})\)) 
 is a random mechanism that
 takes as input a set of nodes \(x_1\), \ldots, \(x_n\) in \(\Reals^d\), 
 and outputs a (random) network \(\Network(x_1,\ldots,x_n)=\Network_\omega(x_1,\ldots,x_n)\) 
 composed
 of continuous paths or \emph{routes} 
 \(\Route(x_i,x_j)=\Route_\omega(x_i,x_j)\) connecting all 
 pairs of distinct nodes \(x_i\) and \(x_j\).
 (The explicit dependence on \(\omega\in\Omega\) will 
typically
 be suppressed in the following).
 The connecting route \(\Route_\omega(x,y)\) 
 between
 two specified endpoints \(x\) and \(y\) 
 must be uniquely determined for almost all \(\omega\in\Omega\).
 In addition the
 following \emph{axioms} must be satisfied:
  \begin{enumerate}[label=\({\ref{def:SIRSN}.\arabic*}\)]
  \item\label{def:SIRSN.1} \emph{Similarity-invariant statistics:} 
  For each Euclidean similarity \(\Similarity\) 
  (combined translation, rotation and scaling dilation),
    the networks \(\Network(\Similarity x_1,\ldots,\Similarity x_n)\) and \(\Similarity \Network(x_1,\ldots,x_n)\)
  have the same statistical law.
  \item\label{def:SIRSN.2}\emph{Finite mean length:} Let \(D_1=\length(\Route(x,y))\) be the length of the route \(\Route(x,y)\) between two nodes \(x\) and \(y\) separated by unit distance.
   It is required that the mean \(\Expect{D_1}\)
   of this length be finite.
   \item\label{def:SIRSN.3} The \emph{(Strong) \SIRSN property}: Suppose that \((\Omega,\mathcal{F},\mathbb{P})\) supports independent unit intensity Poisson processes
     \(\Xi_1\), \(\Xi_2\), \ldots which are also independent of the \SIRSN.
    Consider the extended network connecting all points of the \emph{dense} Poisson point process
    \(\widetilde{\Xi}=\bigcup\{\Xi_1, \Xi_2, \ldots\}\). 
    Restrict attention to the ``long-range'' part of the network, containing those portions of connecting paths 
    which are more than distance \(1\) from source or destination, with union given by
    \(\bigcup\{\Route(x,y)\setminus(\ball(x,1)\cup\ball(y,1)):x,y\in\widetilde{\Xi}\}\).
    Viewing this part of the network 
    as a \emph{fibre process} \citep[Chapter 8]{ChiuStoyanKendallMecke-2013},
    it is required that 
    this ``long-range'' fibre process should have
    finite length fraction \(\rho\),
    which is to say, finite mean length per unit area / volume / hyper-volume.
    (\citeauthor{Aldous-2012} uses the term ``edge-intensity'' for the length fraction \(\rho\)).
 
\end{enumerate}
 \end{defn}

\begin{remark}
 Note that \citeauthor{Aldous-2012} defines \SIRSN only in 
 the planar case of \(d=2\).
 Despite the complete absence of intersections between Poisson lines 
 in spaces of dimension \(3\) or higher,
\SIRSN 
 based on Poisson line processes 
 in higher dimensions 
 do in fact exist \citep{Kendall-2014c,Kahn-2015}. 
 Nevertheless,
 this paper 
 focusses on the case \(d=2\);
 our questions (in
 particular the conjecture concerning \(\Pi\)-geodesics discussed below) have trivially negative answers
 for \SIRSN based on Poisson line processes in dimensions \(d\geq3\).
\end{remark}

\begin{remark}
 The notion of a ``dense'' Poisson point process \(\widetilde{\Xi}\) needs careful measure-theoretic interpretation
 \citep{AldousBarlow-1981,Kendall-2000b}: 
 it is used here as a convenient short-hand to refer to
 the union of a countable infinite ensemble of independent unit-intensity Poisson point processes
 \(\Xi_1\), \(\Xi_2\), {\ldots}\,.
\end{remark}

\begin{remark}
 The assertion that \(\Xi_1\), \(\Xi_2\), \ldots are independent of the \SIRSN
 should be interpreted as saying that they are independent of the \(\sigma\)-algebra
 \(\sigma\{\Network(x_1,\ldots,x_n):x_1,\ldots,x_n\in\Reals^d, n=2,3,\ldots\}\),
 viewing the networks \(\Network(x_1,\ldots,x_n)\) as random closed sets (the theory of random closed sets
 is covered for example in \citealp[Chapter 6]{ChiuStoyanKendallMecke-2013}).
\end{remark}

\begin{remark}
Axioms
\ref{def:SIRSN.1}, \ref{def:SIRSN.2}, \ref{def:SIRSN.3}
have strong implications.
For example:
\begin{enumerate}[(a)]
 \item 
 The network obtained by using straight lines for routes
 (thus non-random)
 cannot be a \SIRSN;
almost all pairs of distinct routes have intersections
which are singletons or empty, 
and if the network is used to connect the points of the Poisson point process \(\Xi\) 
then
almost surely any distant point would then be connected to some other distant point of \(\Xi\)
by a straight line passing within \(1/2\) of the origin \(\origin\)
and hence contributing length at least a positive amount (\(\surd{3}=2\surd(1-1/4)\) in dimension \(d=2\))
within unit distance of \(\origin\).
As a consequence,
the intersection of the ``long-range'' fibre process with any bounded open set 
will almost surely have infinite total length, violating Axiom \ref{def:SIRSN.3}
and indeed its weaker variants \ref{def:SIRSN.3'} and \ref{def:SIRSN.3''} discussed below.
%
%
%
 \item Axiom \ref{def:SIRSN.2}
 excludes networks generated by means of 
 coupled
 Brownian bridges. 
 \item The route \(\Route(x,y)\) is almost surely uniquely determined by its endpoints \(x\) and \(y\).
 Nevertheless
 this uniqueness need not (and typically does not) hold simultaneously for all possible inputs.
 \item 
 A related notion of a
 \emph{weak \SIRSN}
 replaces 
 Axiom \ref{def:SIRSN.3} by:
\begin{enumerate}[label=\({\ref{def:SIRSN}.3'}\)]
 \item\label{def:SIRSN.3'} The \emph{Weak \SIRSN property}: 
The infinite network 
  \(\Network(\Xi)=\bigcup\{\Route(x,y):x,y\in\Xi\}\),
 (which connects
 all points of an independent unit intensity Poisson point process \(\Xi\))
  should have finite mean length per unit area / volume / hyper-volume.
\end{enumerate}
   Of course Axiom \ref{def:SIRSN.3} implies Axiom \ref{def:SIRSN.3'}.
 \item A
 still weaker 
 notion is that of
 a \emph{pre-\SIRSN}, further weakening Axiom \ref{def:SIRSN.3'}:
\begin{enumerate}[label=\({\ref{def:SIRSN}.3''}\)]
 \item\label{def:SIRSN.3''}
The \emph{pre-\SIRSN property} \citep{Kendall-2014c}: 
The infinite network \(\Network(\Xi)=\bigcup\{\Route(x,y):x,y\in\Xi\}\), connecting all points of an independent unit intensity Poisson point process \(\Xi\),
should have locally-finite random length measure (the mean length measure need not be locally finite).
\end{enumerate}
   Similarly Axiom \ref{def:SIRSN.3'} implies Axiom \ref{def:SIRSN.3''}.
\end{enumerate}
\end{remark}

\emph{A priori} 
the axioms in Definition \ref{def:SIRSN} might be
mutually exclusive,
in which case no \SIRSN could exist.
\cite{Aldous-2012} 
proposed and rigorously justified a concrete
example of a (planar) \SIRSN, namely the ``binary hierarchy''.
The routes of the network \(\Network(x_1,\ldots,x_n)\)
are constructed as fastest paths 
lying in a dyadic cartesian network marked by varying speeds.
The statistics of 
this network are neither stationary, isotropic, nor scale-invariant;
however all these difficulties are removed by suitable randomization.

\cite{Aldous-2012} also proposed a possible \SIRSN based on a speed-marked improper
planar
Poisson line process \(\Pi\), 
in which the individual routes composing the network \(\Network(x_1,\ldots,x_n)\)
are fastest paths using \(\Pi\) (we call these paths \emph{\(\Pi\)-geodesics}).
This mechanism 
is determined by choice of a parameter \(\gamma>2\):
each line of \(\Pi\)
is marked by a positive speed-limit \(v\geq0\), and \(\Pi\) is defined 
using
a marked planar Poisson line process with intensity measure
\(\nu\) 
given in two equivalent forms by
 \begin{align}\label{eq:plp-sirsn-intensity}
  \nu({{\d}{v}}{{\d}{r}}{{\d}{\theta}}) \quad& =\quad \tfrac{\gamma-1}{2} v^{-\gamma} {{\d}{v}}{{\d}{r}}{{\d}{\theta}}\,,
  \\
  \label{eq:plp-sirsn-intensity-alt}
  \quad& =\quad \tfrac{\gamma-1}{2} v^{-\gamma} \; \sin\phi \;
  {{\d}{v}}{{\d}{s}}{{\d}{\phi}}\,.
 \end{align} 
The first form \eqref{eq:plp-sirsn-intensity}
is based on
parametrization 
of an (unsensed) line \(\Line\) 
using coordinates \(r\in\Reals\) and \(\theta\in[0,\pi)\):
here
\(r\) is the signed distance of \(\Line\) from a reference point often taken to be the origin \(\origin\), 
and \(\theta\) is the angle made by \(\Line\) with a reference line often taken to be the \(x\)-axis.
The second and equivalent form 
\eqref{eq:plp-sirsn-intensity-alt}
is based on a parametrization which
replaces \(r\) by the signed distance \(s\)
\emph{along} the reference line to the intersection with \(\Line\);
the angle \(\phi\) made by \(\Line\)
with the reference line now has to be sine-weighted.
We will use both kinds of parametrization below,
signalled by reference to \eqref{eq:plp-sirsn-intensity} 
or \eqref{eq:plp-sirsn-intensity-alt}.

It is convenient to write \(v(\Line)\) 
for the speed of a line \(\Line\in\Pi\).

Note that
\(\gamma>1\) is required if 
all lines \(\Line\) of speed \(v(\Line)\geq v_0>0\)
taken from such a 
speed-marked Poisson line process 
are to form a \emph{proper} (non-speed-marked) Poisson line process \(\Pi_{\geq v_0}\)
of finite intensity.
The factor \(\tfrac{\gamma-1}{2}\) in \eqref{eq:plp-sirsn-intensity} is a convenient normalization,
chosen so 
that 
\(\Pi_{\geq1}\)
(without speed-marks)
forms a unit intensity Poisson line process.
Routes 
of the \SIRSN
are fastest-possible Lipschitz paths whose almost-everywhere-defined velocities integrate 
the highly singular orientation field provided by \(\Pi\) 
and obey the speed limits given by the speed-marks \(v\).
If \(\gamma>2\) then \(\Pi\) can be used to define a random metric space on \(\Reals^2\):
the random metric is given by the time spent travelling from one point to another by the fastest route;
and this is indeed
a \SIRSN 
(proof is a combination of
\citealp{Kendall-2014c} and \citealp{Kahn-2015}).

The Poisson line process
model for a \SIRSN
has the advantage of being intrinsically stationary and isotropic, with no need for extra randomization;
this follows because
the intensity measure \eqref{eq:plp-sirsn-intensity}
is invariant under Euclidean isometries of
the underlying plane \(\Reals^2\). 
Moreover the scaling transformation
\begin{align}\label{eq:SIRSN-scaling}
 r \quad&\mapsto\quad a \;\overline{r}\,,\\
 v \quad&\mapsto\quad a^{\frac{1}{\gamma-1}}\;\overline{v}\,,\nonumber
\end{align}
also leaves both \eqref{eq:plp-sirsn-intensity} 
and the equivalent \eqref{eq:plp-sirsn-intensity-alt} invariant.
Consequently the distribution of the Poisson line \SIRSN is invariant under scaling if the speed marks are adjusted as indicated in \eqref{eq:SIRSN-scaling}.

%
%


As noted above, the following
conjecture on Poisson line \SIRSN remains open.
\begin{conjecture}\label{conj:complete-stop}
 Given a \SIRSN generated by a planar Poisson line process \(\Pi\), consider a \emph{\(\Pi\)-geodesic} 
providing the fastest route between two specified points.
It is conjectured that such a \(\Pi\)-geodesic
never comes to a complete halt
strictly between its start and its destination.
\end{conjecture}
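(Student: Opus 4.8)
The plan is to argue by contradiction: suppose that with positive probability some $\Pi$-geodesic comes to a complete halt at an interior point $p$, and derive a conclusion about the speed traversed along the geodesic that contradicts the neighbourhood-recurrence of the critical \SIRSNRRF established in the body of the paper. First I would make the notion of ``complete halt'' precise. Parametrizing the geodesic by elapsed travel-time, a halt at $p$ means that the path reaches $p$ only in the limit, traversing infinitely many segments whose speed-marks $v(\Line)$ tend to zero, so that near $p$ the geodesic fails to be a locally-finite concatenation of line-segments. I would then exploit the scale-invariance encoded in Axiom \ref{def:SIRSN.1} together with the scaling \eqref{eq:SIRSN-scaling}: since that scaling acts multiplicatively on $v$, the logarithm of the current speed-mark becomes an additive functional, and blowing up a neighbourhood of $p$ at the scale of the residual distance to $p$ should present the geodesic ``environment viewed from the moving particle'' as a \emph{stationary} process under the combined recentre-and-rescale dynamics. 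A complete halt at $p$ is then precisely the event that the log-speed along this stationary process drifts to $-\infty$.

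The second step is to compare this rescaled speed process with the \SIRSNRRF. The \RRF is constructed exactly as a ``randomly-broken local geodesic'', and its abstract scattering representation records how the particle's heading and speed are updated at successive line-encounters, with the dynamical detailed balance that makes the scattering reversible. I would seek a coupling, or at least a stochastic domination, between the speed-mark sequence traversed by a genuine $\Pi$-geodesic and the speed process of the critical \SIRSNRRF, using the Mecke--Slivnyak theorem to describe the Palm law of $\Pi$ seen from a typical point of the geodesic, and the ergodic theory of the environment-viewed-from-the-particle to convert these distributional statements into almost-sure statements along the trajectory. If the comparison can be arranged so that the geodesic log-speed is bounded below by (or comparable to) that of the critical \SIRSNRRF, then the neighbourhood-recurrence proved for the latter forces the geodesic speed to return to order-one values infinitely often as $p$ is approached, contradicting the assumed decay to zero and hence ruling out the halt.

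The hard part — and the reason the statement remains a conjecture rather than a theorem — is precisely the passage from the \RRF surrogate to the genuine geodesic. The \SIRSNRRF is a Markovian process whose transitions are governed by a purely \emph{local} scattering rule, whereas a $\Pi$-geodesic is a \emph{globally} optimal fastest path: its behaviour near any point is constrained by the entire environment, and it is not \emph{a priori} Markovian in the heading-and-speed variables, so the coupling of the previous paragraph cannot simply be read off the scattering representation. Establishing the required domination therefore demands genuine control of the global optimality of geodesics at arbitrarily small scales, for which the local Palm picture is strongly suggestive but not conclusive. The critical-parameter neighbourhood-recurrence proved here is exactly the quantitative input one would feed into such an argument, and it supplies firm evidence that the geodesic speed cannot vanish \emph{en route}; but closing the local/global gap rigorously is the obstacle that I expect to dominate the work, and it is what separates the present supporting evidence from a full proof.
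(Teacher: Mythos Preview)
The statement you are addressing is a \emph{conjecture}, not a theorem: the paper explicitly labels it as such and states immediately afterward that it ``remains open''. There is no proof in the paper to compare your proposal against. The entire paper is devoted to constructing and analysing the \SIRSNRRF as \emph{supporting evidence} for the conjecture (see the final sentence of Section~\ref{sec:ergodic} and the opening of Section~\ref{sec:conclusion}), not to proving it.

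To your credit, you clearly recognise this in your third paragraph, where you correctly identify the local/global gap between the Markovian \SIRSNRRF and the globally-optimal \(\Pi\)-geodesic as the essential obstruction. Your sketch of a hypothetical route --- contradiction via drift of log-speed to \(-\infty\), comparison with the critical \SIRSNRRF, invoking neighbourhood-recurrence --- is a sensible research programme and aligns with the paper's own motivation. But it is not a proof, and you should not present it as one: the coupling or domination you appeal to in the second paragraph is precisely what nobody knows how to establish, and your proposal supplies no mechanism for it beyond the words ``I would seek''. The honest summary is that the paper proves Theorem~\ref{thm:nbd-recurrence} as evidence for Conjecture~\ref{conj:complete-stop}, and the conjecture itself remains open.
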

This unresolved conjecture is related to various observations
in \cite{Aldous-2012}
concerning singly and doubly infinite geodesics
in general planar \SIRSN: however it emphasizes the 
behaviour of the \(\Pi\)-geodesic along its entire length
rather than at its start- and end-points.
In complete contrast, note that in dimension \(d\geq3\)
all non-trivial paths 
would have to
halt \emph{en route} a great deal,
since paths
 in dimension \(d\geq3\)
 can change from one line to another only by using infinitely
 iterated infinite cascades of intervening lines.
 
It is fairly straightforward to use the methods of 
\citet[Section 4]{Kendall-2014c} to show that a straight-line positive-speed internal portion
of a \(\Pi\)-geodesic
in dimension \(d=2\)
must connect directly to two other
straight-line portions.
However in principle it might still be possible
(albeit implausible)
for a \(\Pi\)-geodesic to contain a point which lies at the start and finish
of 
two successive
infinite sequences of 
sequentially connected straight-line portions whose speeds decay to zero
near that point.

One consequence of an affirmative answer to Conjecture 
\ref{conj:complete-stop}
would be that all \(\Pi\)-geodesics 
correspond to
doubly-infinite lists of 
sequentially connected segments of lines from \(\Pi\).
Furthermore \(\Pi\)-geodesics between two points
\(x\) and \(y\) lying on \(\Pi\) would be contained
in the locally finite network of lines of \(\Pi\)
of speed exceeding any small enough \(v>0\);
highly relevant when simulating
\(\Pi\)-geodesics.

We seek
insight concerning
Conjecture \ref{conj:complete-stop} by investigating an associated question
of intrinsic interest:
namely
whether one can build a natural scale-invariant random process on \(\Pi\) which can be viewed as a ``randomly-broken local \(\Pi\)-geodesic'',
and yet which is \emph{speed-neighbourhood-recurrent} (that is to say, neighbourhood recurrence holds for the process given by the
speed of the random process;
so that this speed neither tends to zero nor drifts off to infinity, but returns 
at arbitrarily large times to a neighbourhood of the original speed).
Failure to construct a natural random
process of this form would reasonably count as evidence against the conjecture.

The study of random processes on \SIRSN
is also prompted by the widespread
study of natural random processes on a random structure
(compare for example
the study of Liouville diffusions for Brownian maps and associated structures:
\citealp{Berestycki-2013,GarbanRhodesVargas-2013}).
Such random processes can be used to express a natural geometry for the structure.
For example, in a different context,
the Riemannian geometry expressed by a diffusion
has been
used to describe those smooth elliptic diffusions which admit Markovian maximal couplings
\citep{BanerjeeKendall-2014}.

There are various options
for
defining random processes on a planar
 Poisson line \SIRSN \(\Pi\):
\begin{enumerate}[1:]
 \item 
 a conventional random walk on the plane, independent of the \SIRSN, and connect successive random walk locations by \(\Pi\)-geodesic interpolation.
 However this construction is only weakly linked to the \SIRSN structure of \(\Pi\), and 
 yields
 a random process 
 for which speeds are trivially always revisiting zero,
 since almost surely each random walk location would miss all the lines of \(\Pi\);
 \item 
 construction of Brownian motion on the line structure of \(\Pi\), using a 
 generalization of Walsh or ``spider'' Brownian motion
 \citep{BarlowPitmanYor-1989} to describe the way in which the Brownian motion switches between lines of different speeds.
 However such constructions 
 are not easily related
 to the speed structure of \(\Pi\),
 except indirectly
 by relating Brownian diffusion rate to \(\Pi\) speed marks.
 \item we choose instead to adapt the notion of a Rayleigh Random Flight (\RRF: 
 introduced in \citealp[and associated correspondence]{Pearson-1905}).
 Implementation
 is scale-invariant
 using the following inductive construction:
 proceed at top speed along a chosen line, 
 switch to intersecting lines in a manner controlled by relative speeds,
 (requiring switches to faster lines always to occur),
 and choose the new direction of movement equiprobably 
 from
 the two directions along the new line.
\end{enumerate}

The
\RRF construction
is a planar version of the one-dimensional scattering processes
studied by \cite{KendallWestcott-1987},
in which coupling is used 
to prove limits of Brownian type for an inhomogeneous random scattering process on the line.
Such scattering processes also
arise naturally in statistical mechanics 
(see for example \citealp[chapter 11]{McKean-2014}).
In the \SIRSN context,
interest lies in whether it is possible to choose parameters for a scale-invariant \RRF on a Poisson line \SIRSN
(essentially, to determine the probability of switching when encountering an intersection)
such that the speed of movement of the \RRF particle forms a neighbourhood-recurrent random process,
neither diverging to infinite speed nor converging to zero speed.
The resulting \SIRSNRRF can be viewed as a ``randomly-broken local \(\Pi\)-geodesic'', so if
neighbourhood-recurrence of speed can be obtained by
a natural choice of parameters
then this 
supports
the conjecture that \(\Pi\)-geodesics do not 
halt \emph{en route}.

We ease the task of 
describing constructions of such \SIRSNRRF by assuming
that each line of the \SIRSN is additionally furnished
with a random choice of direction.

The current section has explained the
rationale and the mathematical content of 
the notion of a \SIRSN, and has
motivated the study of \SIRSNRRF
by relating the possibility of speed-neighborhood recurrence of \SIRSNRRF to the question
of whether \SIRSN \(\Pi\)-geodesics can contain
interior points at which they come to a complete stop.
Section \ref{sec:ASP}
then introduces 
concepts which are
helpful in analysing \RRF on \SIRSN, for which
possible switching points form countable dense subsets on
each line of the \SIRSN. 
The complexity of this situation
is usefully addressed by 
taking an axiomatic approach.
We consider an
\emph{abstract scattering representation} (Definition \ref{def:abstract-s-p});
namely an algebraic representation of non-lazy 
discrete-state-space Markov chains 
(chains that have no chance of not moving)
in terms of \emph{transmission probabilities}
(intuitively, the probability of arriving at a state but not necessarily stopping and changing direction there) and
\emph{scattering probabilities}
(intuitively the probability of stopping and changing direction at a state
given that the process arrives there).
In the context of a Poisson line \SIRSN \(\Pi\), the states of the chain
are ordered distinct pairs of lines, corresponding to points at which the \RRF switches from one line to another; so the state-space can be written as
\(\StateSpace\) where \(\Diagonal\) is the diagonal of \(\Pi\times\Pi\).

All non-lazy Markov chains admit abstract scattering representations: 
the presence of an involution
\(a\mapsto\widetilde{a}\)
of the state-space (corresponding to
reversal of direction of travel in our application) permits the state-space
to be broken up into \emph{scattering classes}
\(\Equiv\)
(eventually corresponding to the lines of \(\Pi\)),
and a suitably compatible 
total ordering for each scattering class
(see Definition \ref{def:delineated};
in the \RRF case, a selection for each line of
one of the two possible linear orderings)
then permits the 
transition probabilities to be expressed
purely in terms of the scattering probabilities
and 
probabilities \(\omega_{a,\pm}\)
of initial binary choices of direction within
the relevant scattering class 
(see Theorem \ref{thm:delineated};
and note that it is at this stage that it pays to assume preferred directions for the lines of \(\Pi\)).
If furthermore the involution leads to
dynamical detailed balance 
with respect to a given invariant measure \(\pi\) 
on \(\Pi\)
(Definition \ref{def:reversable})
and the scattering representation
is unbiased, in the sense 
given in that definition,
that 
then the scattering probabilities themselves,
and \(\pi\),
are necessarily defined in terms of ratios of 
prescribed functions \(\kappa(\Equiv)\) of
equivalence classes (Theorem \ref{thm:mh-ratio}).
This leads to 
a
highly desirable
conclusion: the stochastic dynamics of a
dynamically reversible \RRF on a \SIRSN \(\Pi\)
can be defined 
using
prescribed a scattering class function \(\kappa(\Equiv)\)
(where each \(\Equiv\) is actually a line \(\Line\)
of \(\Pi\)).

Section \ref{sec:SIRSN-RRF} continues the story 
by taking account of
the similarity
symmetries of the \SIRSN \(\Pi\) 
controlled by intensity measure
\(\tfrac{\gamma-1}{2}v^{-\gamma}{\d}v{\d}r{\d}\theta\).
A dynamically reversible \RRF on \(\Pi\) 
is said to have similarity-equivariant dynamics
if scattering probabilities and ratios of 
evaluations of \(\pi\)
(considered as functions of \(\Pi\))
are similarity-invariant, while
\(\pi\) itself is Euclidean-invariant.
Palm distribution theory
(specifically the Slivnyak-Mecke Theorem \ref{thm:Slivnyak-Mecke}) and
ergodicity of \(\Pi\) (Theorem \ref{thm:ergodic})
can now be used to argue that ratios of 
\(\kappa(\Line)\) must equal the \(\alpha\)-th
power of ratios of speeds \(v(\Line)\), for a fixed
positive exponent \(\alpha\) (Theorem \ref{thm:similarity-invariance}). If line-changes
are given by a recipe of Metropolis-Hastings form
then Theorem \ref{thm:similarity-invariance}
shows that scattering probabilities and \(\pi\)
are determined entirely by line-speeds and the exponent \(\alpha\): moreover \(\alpha>\gamma-1\) is required
if scattering is to be non-degenerate
(i.e: is not to happen immediately after
time \(0\)).

We thus obtain a natural definition of a \SIRSNRRF on the \SIRSN \(\Pi\), parametrized by the exponent \(\alpha\)
(Definition \ref{def:SIRSN-RRF}); moreover
this \SIRSNRRF is then an irreducible 
Markov chain on the state-space
of ordered 
intersections \(\StateSpace\)
when conditioned on \(\Pi\) (Lemma \ref{lem:irreducible}).

Section \ref{sec:environment} considers
the Markov chain given by the \emph{relative environment}
of the \SIRSNRRF, which is to say, the environment viewed from the \RRF 
after using the group of similarities to transform the \RRF
state to be the intersection of a unit-speed line along the
\(x\)-axis (corresponding to the current line of travel) 
and a further variable-speed line 
(corresponding to the previous line of travel)
intersecting the \(x\)-axis at the origin \(\origin\).
Working with Dirichlet forms related to the dynamically reversible \SIRSNRRF, and using the
Slivnyak-Mecke Theorem \ref{thm:Slivnyak-Mecke},
we establish that the
Markov chain given
by the relative environment
has stationary \emph{probability} distribution
given
(Theorem \ref{thm:stationarity}) by 
the independent superposition of
a unit-speed line along the \(x\)-axis
with a line through \(\origin\)
of 
\begin{enumerate}[(a)]
 \item a random log-speed given by a possibly asymmetric Laplace density;
 \item and a random angle \(\phi\) with the unit-speed line 
(with \(\phi\) having sine-weighted density);
 \item together with a copy of \(\Pi\).
\end{enumerate}
%
The density of the log-speed is symmetric exactly 
at the critical value
\(\alpha=2(\gamma-1)\).
The non-ergodic part of Birkhoff's ergodic theorem now allows us to rule
out non-critical \SIRSNRRF, as in these cases the average log-relative speed
has positive chance of converging to a non-zero limit, and thus the log-speed 
must have positive chance of never returning to any bounded interval 
around the initial log-speed.
 
Section \ref{sec:ergodic} 
uses all this
to show that the critical \SIRSNRRF is 
speed-neigh\-bourhood-recurrent.
This is done by establishing that 
the Markov chain given by the relative environment
of the \SIRSNRRF,
when started according to the stationary probability
distribution, is in fact ergodic.
This follows from Theorem \ref{thm:ergodic-l-r-s},
a variation on an argument 
of \cite{Kozlov-1985}, using the ergodicity of \(\Pi\)
(note that the argument works for all \(\alpha>\gamma-1\)).
The main result of this paper, the 
neighborhood-recurrence of the log-speed process
in the critical case 
(Theorem \ref{thm:nbd-recurrence})
now follows from Theorem \ref{thm:KestenSpitzerWhitman},
an adaptation of the classic Kesten-Spitzer-Whitman range
theorem to the case of continuous 
one-dimensional state-space.

The concluding Section \ref{sec:conclusion}
discusses
related results and possibilities for future work.

 \section[RRF and abstract scattering]{Rayleigh random flights (\RRF) and abstract scattering}\label{sec:ASP} 
 
 The first task is to define 
 a suitable family of Rayleigh random flight processes on \(\Pi\)
 with scale-invariant dynamics; 
 a principal criterion for suitability is that the resulting process
 should be amenable to calculation.
 Moreover it is appropriate for 
 the process 
to be able 
 to change direction whenever encountering any one of the dense 
 countable
 set of line-intersections along a given Poisson line.
 It is useful to control the complexity of this set-up 
 by adopting an abstract approach 
 based on general scattering processes. 
 An additional merit of this approach
 is that it permits
 isolation of
 a particular one-parameter family of 
 \emph{discrete-time} Rayleigh random flight processes (\RRF) on \(\Pi\)
 which
 can be naturally described as {\SIRSNRRF}.

 We motivate the definition of abstract scattering
 by first making
 a few remarks about possible (continuous-time) \RRF
 on Poisson line \SIRSN.
 Let \(\Pi\) be an improper speed-marked planar Poisson line process, with intensity measure given by Equation \eqref{eq:plp-sirsn-intensity} above.
Our primary interest is in
the \SIRSN case \(\gamma>2\), although our results extend to the borderline \emph{\SIRSN candidate} (but non-\SIRSN)
case \(\gamma=2\).
 Note that, in case \(\gamma=2\), 
 \(\Pi\) still possesses Euclidean- and scaling-invariance, 
 even though it no longer possesses
 the \SIRSN property.
   A reasonable if informal
   definition of a (continuous-time) Rayleigh random flight \(X\) 
   on \(\Pi\) runs as follows:
   it is a continuous-time process
   living on the set which is the 
   (countable)
   union of the lines of \(\Pi\) 
   (this random dense \(F_\sigma\) Lebesgue null-set is called the \emph{silhouette} in \citealp{Kendall-2014c}:
   it can be understood as the countable union of the random closed sets formed by lines of speed exceeding \(1/m\) for \(m=1,2,\ldots\)).
   The continuous-time Rayleigh random flight process
   travels along the lines of \(\Pi\), 
   moving at the maximum speed permitted by the relevant speed-limits on \(\Pi\), 
   with changes of direction (switching onto different lines) 
   occurring at a carefully defined
   sequence of random Markov times 
   \(0< \tau_1<\tau_2<\ldots\)
   which will be made up of
   some (but by no means all) encounters with intersections of lines of \(\Pi\).
   We will consider only cases
   in which the resulting sequence of random times will in fact
   be almost surely locally finite up to a
   possibly infinite 
   ``explosion time'' which is the accumulation point of the times of direction-change.
In fact if \(\gamma\geq2\) then
the explosion time 
almost surely 
cannot correspond to the path becoming unbounded in finite time.
This is because
(in the terminology of \citealp{Kendall-2014c})
the path of the RRF is a \emph{\(\Pi\)-path},
namely
a locally Lipschitz path on \(\Pi\)
with top speed almost always 
locally bounded above by relevant speed marks. 
When \(\gamma\geq2\)
a comparison argument
\citep[Theorem 2.6]{Kendall-2014c} 
bounds
distances 
travelled by \(\Pi\)-paths begun in a 
specified compact set and travelling for specified time \(T<\infty\).
Note that the case \(\gamma<2\) is \emph{not} 
 convenient for our purposes;
 the results of \citet{Kendall-2014c} 
 then imply that
 it is possible for locally
 Lipschitz \(\Pi\)-paths to obey the \(\Pi\) speed limits
 and yet to diverge to infinity in finite time, resulting in
 sterile questions about failure of 
 stochastic completeness.
%


As noted above,
we facilitate discussion of the direction of travel along lines of \(\Pi\),
by making arbitrary choices of sense of direction
to endow all the lines of \(\Pi\)
with preferred directions.
   
   The resulting
   continuous-time \RRF \(X\) 
   will be piecewise-linear, and so 
   its paths can be required
   to be c\`adl\`ag and right-differentiable.
   Let \(Y\) denote the right-hand time-derivative of \(X\):
   \[
    Y(t) \quad=\quad \lim_{s\downarrow0}\frac{X(t+s)-X(t)}{s}\,.
   \]
   In particular, the speed \(|Y|\) of \(X\) is the maximum permitted 
   on the current line, which is to say that it
   is determined by the speed-mark of the current line:
   \[
    |Y(t)|\quad=\quad v(\Line) \qquad \text{ for }\Line=X(t)+Y(t)\cdot\Reals\,,
   \]
   where \(\Line=X(t)+Y(t)\cdot\Reals\) is always a line of \(\Pi\) when \(Y(t)\neq0\), and \(v(\Line)\) is the speed-limit of \(\Line\).

   It is convenient to consider the \emph{augmented} process
   \[
   \left(
   (X(t)+Y(t)\cdot\Reals, |Y(t)|, X(t)+Y(t-)\cdot\Reals, |Y(t-)|)
   \;:\;t\geq0
   \right)\,
   \]
recording both the current unsensed line of travel
\(X(t)+Y(t)\cdot\Reals\) and the 
previous unsensed line \(X(t)+Y(t-)\cdot\Reals\)
as well as the 
corresponding
absolute speeds \(|Y(t)|, |Y(t-)|\). 
It is convenient to omit the actual sense or signed direction of travel;
this augmented process is Markov
conditional on \(\Pi\) even given the augmentation
by \(|Y(t)|\) and the further augmentation by \(|Y(t-)|\) 
(which will facilitate
later discussion of dynamical detailed balance),
this is because knowledge of
 \(|Y(t)|\) and \(|Y(t-)|\) can be obtained from
 knowledge of the
speed-marks of the corresponding lines of \(\Pi\).
We derive a discrete-time \RRF
by sampling 
the augmented process
at the 
times \(\tau_n\) when it switches from one line to another.
This (discrete-time) \RRF
is the main subject of study for this paper.
Letting \(\Line_-(\tau_n)\) be the previous line of travel
and letting \(\Line_0(\tau_n)\) be the current line of travel,
we know that \(X(\tau_n)\) is the unique point in the intersection \(\Line_-(\tau_n)\cap\Line_0(\tau_n)\).
This corresponds to obtaining the \RRF by sampling the 
continuous-time Rayleigh random flight process at the instants of scattering
and just before the choices of
direction of travel on the new line;
we may then consider the \RRF as the sampled process as
\(Z=(Z_n=(\Line_-(\tau_n),\Line_0(\tau_n))\;:\; n\geq1)\)
with state-space 
\begin{equation}\label{eq:state-space}
\StateSpace
\quad=\quad
\{(\Line_-,\Line_0)
    \;:\;\Line_-,\Line_0\in\Pi,\; \Line_-\neq\Line_0\}\,,
 \end{equation}
given by ordered pairs of
(speed-marked)
lines \(\Line_-,\Line_0\in\Pi\),
removing the diagonal set 
\(\Delta=\{(\Line,\Line):\Line\in\Pi\}\)
so that \(\Line_-,\Line_0\) must be distinct.
We repeat for emphasis
that \(Z\) is a Markov chain
when \emph{quenched}, which is to say, when conditioned on 
the random environment given by \(\Pi\).

The process \((Z_n:n\geq1)\) is a 
particular instance of a \emph{generalized scattering process}.
It can be viewed as moving from the intersection \(\Line_-\cap\Line_0\) 
along \(\Line_0\) past further intersections
until it chooses to stop
(is scattered) at a 
new intersection \(\Line_0\cap\Line_+\), 
where it will switch to the new line \(\Line_+\) and continue.
This is a planar variation of the scattering processes discussed for 
example in \cite{KendallWestcott-1987}.
We rise above confusing detail about {\SIRSN}s
by introducing a novel
algebraic representation of general scattering for
discrete state-space Markov chains,
always keeping in mind the motivating example of \RRF 
on a Poisson line \SIRSN.
A further benefit of this abstract approach is that it will later allow us to characterize a natural one-parameter
family of {\RRF}s respecting the symmetries of
the \SIRSN \(\Pi\).
\begin{defn}
\label{def:abstract-s-p}
Consider a non-lazy
 discrete-time countable state-space Markov chain \(Z\),
 (non-lazy, so the transition probability matrix has zeroes on the main diagonal).
 An \emph{abstract scattering representation} for \(Z\)
 expresses
 the one-step transition probabilities
 \(p_{a,b}\) of \(Z\)
 in product form
 \begin{equation*}
  p_{a,b} \quad=\quad \omega_{a,b} s_b \qquad \text{ for all states }a,b\,,
 \end{equation*}
for prescribed
\(s_a\in(0,1]\) and \(\omega_{a,b}\in[0,1]\),
where the \emph{transmission probabilities} \(\omega_{a,b}\) form a
matrix with zeroes on the diagonal and the 
\emph{scattering probabilities} \(s_a\) are all positive.
\end{defn}
 \begin{remark}
 Because of the positivity requirement \(s_a>0\), it follows that
\(p_{a,b}>0\) if and only if \(\omega_{a,b}>0\).
Since the matrix \((\omega_{a,b})\) vanishes on the diagonal,
the same must be true of all such matrices \((p_{a,b})\).
 \end{remark}

The content of this definition is algebraic rather than probabilistic.
 In particular the system of scattering and transmission probabilities is not uniquely defined by the resulting Markov kernel
 (note that the choice of transmission probabilities
 \(\omega_{a,b}=p_{a,b}\)
 and scattering probabilities \(s_a=1\) for all states \(a\) and \(b\)
 always determines an abstract scattering representation, since all \(p_{a,a}\) are required to vanish!)
 and the \(\omega_{a,b}\) and \(s_a\) are described as ``probabilities'' only because all are required to lie in the unit interval \([0,1]\). 
 Indeed the
 system of scattering and transmission probabilities
 need not 
 necessarily
 reflect a specific stochastic mechanism of transmission and reflection
 (though this will eventually be the case for our particular example). 
 In this sense a
 general abstract scattering representation
 is purely formal.
 Interesting examples arise by combining
 scattering probabilities
 with transmission according to a fixed stochastic dynamical system:
 in our case the very simple system of constant-speed
 movement in fixed directions.
The resulting axiomatic approach may offer useful perspectives on questions of statistical survival analysis 
 (see for example \citealp{AndersenBorganGillKeiding-1993}). 
 Scattering processes based on deterministic movement
 also arise in the \emph{ZigZag sampler} in
 Markov chain Monte Carlo theory
 (described for example by \citealp{BierkensFearnheadRoberts-2018}; 
 see also the notion of piecewise-deterministic Markov processes introduced
 by \citealp{Davis-1984}).

 In the context of possible \RRF on a
 \SIRSN or \SIRSN candidate, 
 as noted above,
 the relevant state space is the set of ordered pairs of
 distinct lines \((\Line_-,\Line_0)\)
 (for \(\Line_-, \Line_0\in\Pi\)),
 corresponding to pre- and post-scattering lines.
 The quantity \(\omega_{a,{b}}\) can be interpreted as the 
 probability of getting at least as far as 
 \({b}=(\Line_0,\Line_+)\) along a line \(\Line_0\) from \(a=(\Line_-,\Line_0)\),
 while \(s_{b}\) 
 measures the probability of the process being scattered from 
 the current line \(\Line_0\)
 onto the new line \(\Line_+\).

\begin{remark}\label{rem:HB}
 As an aside,
 we indicate a partial answer to an interesting foundational question: 
 which matrices of probabilities \((\omega_{a,b}:\text{ states }a,b)\) 
 (with zeroes down the diagonal)
 can serve as the matrix of transmission probabilities 
 for an abstract scattering representation of some Markov chain?
 Consider the vectors \(w^{(a)}\) given by \(w^{(a)}_b=\omega_{a,b}\). 
 Suppose these all lie in the Banach space \(\ell^{1}(S)\), so \(\|w^{(a)}\|_1=\sum_b \omega_{a,b}<\infty\) for all states \(a\).
 Let \(C\) be the \(\ell^1\)-closure of the convex hull of the vectors \(t e^{(a)}\), 
 where the \(e^{(a)}\) are canonical basis vectors of \(\ell^1(S)\) and
 \(-\infty < t < 1\).
 Then an application of the Hahn-Banach theorem shows that the 
 matrix
 \((\omega_{a,b}:\text{ states }a,b)\) 
 (with non-negative entries, and zeroes down the diagonal)
 can serve as part of an abstract scattering representation of some Markov chain 
 exactly when
 the \(\ell^1\)-closure of the affine span of the \(w^{(a)}\) (for all states \(a\))
 does not intersect the interior of the set \(C\).
 (However \(\ell^1\)-summability of rows
 \(w^{(a)}\) of the matrix \((\omega_{a,b})\) will not hold in the 
 case of our motivating example.)
\end{remark} 

Consider a 
Markov chain admitting a scattering representation, such that its state-space
admits an involution \(a\mapsto\widetilde{a}\) (with no fixed points). Suppose
further
(to simplify future exposition)
that \(p_{a,\widetilde{a}}=0\) for all states \(a\).
%
%
Then
the state-space can be partitioned into equivalence classes
as follows.
\begin{defn}\label{def:equivalence}
Consider a general abstract scattering representation 
of a Markov chain.
Suppose \(a\mapsto\widetilde{a}\) is an involution on the
state-space with no fixed points
and such that
\(p_{a,\widetilde{a}}=0\) for all states \(a\).
Then the state-space supports an equivalence relation
\(a\sim b\) which is obtained by saturating the relation
\(a\leftrightsquigarrow b\), holding
 if either \(\omega_{a,\widetilde b}>0\) 
 or
 \(\omega_{b,\widetilde a}>0\).
Equivalence classes \(\Equiv\), \(\Equiv'\) 
(which we will refer to as \emph{scattering classes})
are 
said to be \emph{connected} when there exists 
 a finite connecting chain of equivalence classes 
 \(\Equiv=\Equiv_1\), \(\Equiv_2\), \ldots, \(\Equiv_k=\Equiv'\)
 and states \(a_k\in\Equiv_k\) in these classes
 such that \(\widetilde{a}_k\in\Equiv_{k+1}\) for \(k=1,\ldots,k-1\).
\end{defn}

In the case of {\RRF}s on \(\Pi\), 
the relevant
involution is given by transposition of lines:
if \(a=(\Line_-,\Line_0)\) then 
\(\widetilde{a}=(\Line_0,\Line_-)\).
Subject to regularity conditions, scattering classes will then
correspond to the lines of \(\Pi\);
the
scattering class \(\Equiv(\Line)\) corresponding to line \(\Line\) is given by the set of all states 
\((\Line_-,\Line)\)
from which the scattering
process moves off along line \(\Line\);
\begin{equation}
 \Equiv(\Line)\quad=\quad 
 \left\{(\Line_-,\Line): \Line_-\in\Pi\setminus\{\Line\}\right\}\,.
\end{equation}
Note moreover that,
 in the Rayleigh Random Flight application, a state \(a\)
 equivalent to its involution \(\widetilde a\) 
 would correspond to a ``reverse scatterer'', 
 which would be able to reverse the direction of travel of the \RRF.
 These are not present in the simple version studied here, 
 but correspond to the one-dimensional symmetric scatterers considered by \citet{KendallWestcott-1987}.
 Their introduction might lead to 
 ``Brownian-like limits'' for \RRF on \(\Pi\), 
 but we do not pursue this here. In particular we
 would then need to adjust this exposition and definitions
 to account well for
 instances when \(p_{a,\widetilde{a}}\)
 could be positive.

\begin{remark}
 In general \(\omega_{a,\widetilde b}>0\), \(\omega_{b,\widetilde c}>0\) need \emph{not} imply \(\omega_{a,\widetilde c}>0\).
 If \(a \sim c\) then either \(a=c\) or there 
 is a finite sequence of
 states \(a=b_0\), \(b_1\), \ldots \(b_{n-1}\), \(b_n=c\)
 such that, for each successive pair of indices
 \(b_i\) and \(b_{i+1}\),
 either \(\omega_{b_{i},\widetilde b_{i+1}}>0\) 
 or \(\omega_{b_{i+1},\widetilde b_{i}}>0\).
\end{remark}

Given an abstract scattering representation and involution, if the scattering classes can be furnished with total orderings
which are compatible with the scattering representation then 
then we can obtain an attractively simple representation
of the transmission probabilities 
(see Theorem \ref{thm:delineated} below).

\begin{defn}
\label{def:delineated}
 A \emph{delineated scattering process} is a Markov chain which admits an abstract scattering 
 representation together with an involution
 \(a\mapsto\widetilde{a}\),
 and satisfying the following compatibility property:
 each scattering class \(\Equiv\) possesses a total ordering 
 \(\prec\) 
 such that the following holds: if \(a\prec b \prec c\) in \(\Equiv\) then
 \(\omega_{b,\widetilde a}+\omega_{b,\widetilde c}\leq1\)
 and moreover
 \begin{equation}\label{eq:total-order}
  \omega_{a,\widetilde c}\quad\leq\quad \omega_{a,\widetilde b} \left(1-s_{\widetilde b}\right)
 \qquad\text{ if }a\prec b \prec c \text{ or }c\prec b \prec a\,,
 \end{equation}
If it is required to emphasize the r\^oles of the involution \(a\mapsto\widetilde{a}\) 
and the family of scattering classes 
\(\EquivClass=\{\Equiv: \Equiv \text{ a scattering class}\}\)
then we speak of an
\emph{\((a\mapsto\widetilde{a}, \EquivClass)\)-delineated scattering process}.
\end{defn}

Inequality \eqref{eq:total-order} implies that \(\omega_{a,\widetilde b}\) is weakly increasing in \(b\) if 
\(b\) ranges over the scattering class of \(a\) and \(b\prec a\), 
 and weakly decreasing in \(b\) if 
 \(b\) ranges over the scattering class of \(a\) and \(a\prec b\). 
 The requirement \(\omega_{a,\widetilde b}+\omega_{a,\widetilde c}\leq1\) if \(b\prec a \prec c\)
 is suggestive of a scattering mechanism
 that chooses the direction of travel on 
 \(\Equiv\) at random once it is scattered at \(a\in\Equiv\).

 \begin{remark}
 Definition \ref{def:delineated} could be expressed  in
 terms of separation rather than ordering: however 
 the use of orderings permits easier notation.
 \end{remark}

In the case of \RRF on \(\Pi\), 
the required total ordering on a scattering class
is obtained from the natural linear ordering on the corresponding line
(using the arbitrary preferred direction chosen for that line).

In general the presence of an involution \(a\mapsto\widetilde{a}\)
as in Definition \ref{def:delineated},
together with compatible total orderings on scattering classes
\(\Equiv\in\EquivClass\),
makes it possible to write down explicit 
expressions for the transmission probabilities largely in
terms of scattering probabilities.
Suppose that \(a\) lies in the scattering class \(\Equiv\) 
Recall that 
the transition probabilities 
\(p_{a,\widetilde b}=\omega_{a,\widetilde b} s_{\widetilde b}\) form a stochastic matrix, 
moreover \(\omega_{a,\widetilde a}=0\)
(a consequence of the  requirement that \(p_{a,\widetilde{a}}=0\)
together with the requirement that \(s_a>0\)), 
while \(\omega_{a,\widetilde b}=0\) if \(b\not\in\Equiv\)
(following from the definition of
scattering class \(\Equiv\) in Definition \ref{def:equivalence}).
Accordingly,
\[
\sum_{z\in\Equiv\;:\; z\prec a} \omega_{a,\widetilde z} s_{\widetilde z}
\;+\;
\sum_{z\in\Equiv\;:\; a\prec z} \omega_{a,\widetilde z} s_{\widetilde z} \quad=\quad
 \sum_{z\in\Equiv\;:\; z\neq a} \omega_{a,\widetilde z} s_{\widetilde z} 
 \quad=\quad
 \sum_{z} p_{a,\widetilde z}\quad=\quad 1\,.
\]
This permits us to represent the statistical behaviour of a delineated scattering process solely
in terms of the scattering probabilities and of
limiting transmission probabilities \(\omega_{a,+}\)
 and
 \(\omega_{a,-}\) for \(a\in S\).
 \begin{thm}\label{thm:delineated}
  For 
  an \((a\mapsto\widetilde{a}, \EquivClass)\)-delineated scattering process, 
  if 
  states \(a\) and \(b\) lie in the same scattering class 
  \(\Equiv\in\EquivClass\) then
  \begin{align}
   p_{a,\widetilde b} \quad&=\quad 
   \omega_{a, \widetilde b} s_{\widetilde b} \quad=\quad
   \begin{cases}
   \omega_{a,+} \left(\prod_{z\in\Equiv\;:\; a\prec z\prec b}(1-s_{\widetilde z})\right) s_{\widetilde b}
   & \text{ if }a\prec b\,,
   \\
   \omega_{a,-} \left(\prod_{z\in\Equiv\;:\; b\prec z\prec a}(1-s_{\widetilde z})\right) s_{\widetilde b}
   & \text{ if }b\prec a\,.
   \end{cases}
   \label{eq:concrete}
  \end{align}
Here
 \begin{align}
\omega_{a,+} \quad&=\quad 
\sup\{\omega_{a,\widetilde{b}}\;:\;
b\in\Equiv,\, a\prec b\}
\quad\leq\quad1
\,,
\nonumber
\\
\omega_{a,-} \quad&=\quad 
\sup\{\omega_{a,\widetilde{b}}\;:\;
b\in\Equiv,\, b\prec a\}
\quad\leq\quad1\,.
\label{eq:limits}
 \end{align}
  Moreover
  \(\omega_{a,-}+\omega_{a,+}=1\), 
  so that \(\omega_{a,-}\), \(\omega_{a,+}\) may be interpreted as 
  the conditional probabilities of scattering in \(-\) and \(+\) directions,
  given scattering has occurred,
  while \(\omega_{a,+}\prod_{z\in\Equiv\;:\; a\prec z}(1-s_{\widetilde z})=\omega_{a,-}\prod_{z\in\Equiv\;:\; z\prec a}(1-s_{\widetilde z})=0\),
  so that eventual scattering occurs with probability \(1\)
  whichever direction is taken.
 \end{thm}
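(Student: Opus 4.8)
The plan is to prove the equivalent statement about transmission probabilities: after cancelling the strictly positive factor \(s_{\widetilde b}\), equation \eqref{eq:concrete} asserts that \(\omega_{a,\widetilde b}=\omega_{a,+}\prod_{z\in\Equiv:\,a\prec z\prec b}(1-s_{\widetilde z})\) when \(a\prec b\) (and the mirror-image identity when \(b\prec a\)). I would first obtain this as an \emph{inequality} \(\le\) by iterating the delineation bound \eqref{eq:total-order}, and then promote it to equality --- simultaneously with the normalisation \(\omega_{a,-}+\omega_{a,+}=1\) and with \(\omega_{a,\pm}\Pi_\pm=0\) --- by a single squeezing argument driven by the row-sum identity \(\sum_z p_{a,\widetilde z}=1\).

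For the upper bound, fix \(a\prec b\) and any finite chain \(a\prec z_1\prec\dots\prec z_m\prec b\) of intermediate states in \(\Equiv\). Applying \eqref{eq:total-order} successively to the triples \((a,z_k,z_{k+1})\) and finally to \((a,z_m,b)\) chains to \(\omega_{a,\widetilde b}\le\omega_{a,\widetilde{z_1}}\prod_{k=1}^{m}(1-s_{\widetilde{z_k}})\le\omega_{a,+}\prod_{k=1}^{m}(1-s_{\widetilde{z_k}})\), using \(\omega_{a,\widetilde{z_1}}\le\omega_{a,+}\) from \eqref{eq:limits}. Taking the infimum over all finite intermediate configurations replaces the finite product by the full (possibly infinite) product \(\prod_{a\prec z\prec b}(1-s_{\widetilde z})\), giving the claimed \(\le\). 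Separately, the first part of \eqref{eq:total-order}, namely \(\omega_{b,\widetilde a}+\omega_{b,\widetilde c}\le1\) for \(a\prec b\prec c\), yields \(\omega_{b,-}+\omega_{b,+}\le1\) on letting \(a\uparrow b\) and \(c\downarrow b\) along the monotone suprema of \eqref{eq:limits}.

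The key auxiliary bound is \(\Sigma_\pm+\Pi_\pm\le1\), where \(\Sigma_+=\sum_{a\prec z}\bigl(\prod_{a\prec w\prec z}(1-s_{\widetilde w})\bigr)s_{\widetilde z}\) and \(\Pi_+=\prod_{a\prec z}(1-s_{\widetilde z})\) (sums and products over \(\Equiv\), with the mirror-image \(\Sigma_-,\Pi_-\) on the other side). For a finite \(F\subseteq\{z:a\prec z\}\) the elementary telescoping identity gives \(\sum_{w\in F}(\prod_{w'\in F,w'\prec w}(1-s_{\widetilde{w'}}))s_{\widetilde w}=1-\prod_{w\in F}(1-s_{\widetilde w})\); bounding each \(F\)-product below by the full product \(h(w)=\prod_{w'\prec w}(1-s_{\widetilde{w'}})\) and the total product below by \(\Pi_+\) yields \(\sum_{w\in F}h(w)s_{\widetilde w}\le1-\Pi_+\), and the supremum over \(F\) gives \(\Sigma_+\le1-\Pi_+\). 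Now combine everything: applying the upper bound term-by-term, \(1=\sum_{z\neq a}\omega_{a,\widetilde z}s_{\widetilde z}\le\omega_{a,+}\Sigma_++\omega_{a,-}\Sigma_-\le\omega_{a,+}+\omega_{a,-}\le1\). Every inequality is therefore an equality. Equality at the last step gives \(\omega_{a,-}+\omega_{a,+}=1\); equality of the term-by-term bound (a sum of non-negative quantities attaining its own upper bound) forces \(\omega_{a,\widetilde z}s_{\widetilde z}=\omega_{a,+}h(z)s_{\widetilde z}\) for each \(z\succ a\), which is \eqref{eq:concrete}; and equality at the middle step gives \(\omega_{a,+}(1-\Sigma_+)=\omega_{a,-}(1-\Sigma_-)=0\), whence \(\omega_{a,\pm}\Pi_\pm=0\) via \(\Pi_\pm\le1-\Sigma_\pm\).

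The one genuinely delicate point is that \(\Equiv\) typically carries a dense order (in the motivating \RRF example the intersections along a line accumulate everywhere), so there need be no \emph{first} scattering state and the naive telescoping \emph{equality} \(\Sigma_\pm=1-\Pi_\pm\) can fail: probabilistically the walk may scatter with certainty yet possess no well-defined first scattering point. The observation that makes the argument go through is that exact telescoping is never needed --- only the one-sided bound \(\Sigma_\pm\le1-\Pi_\pm\) is used, and the opposite squeeze is supplied entirely by the stochasticity of the kernel through \(\sum_z p_{a,\widetilde z}=1\). Minor bookkeeping --- the conventions \(\sup\emptyset=0\), empty product \(=1\) and empty sum \(=0\) for states at an extremal position of \(\Equiv\) --- disposes of the boundary cases, and the \(b\prec a\) statements follow by the reflection symmetry of the ordering.
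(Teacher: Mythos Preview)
Your proof is correct and follows essentially the same route as the paper: chain the delineation inequality \eqref{eq:total-order} to obtain the upper bound on \(\omega_{a,\widetilde b}\), then squeeze using the row-sum identity \(\sum_z p_{a,\widetilde z}=1\) together with \(\omega_{a,-}+\omega_{a,+}\le1\) to force all inequalities to equalities. The one place you are slightly more careful than the paper is in proving only the inequality \(\Sigma_\pm\le1-\Pi_\pm\) rather than asserting the telescoping \emph{equality} \(\Sigma_\pm=1-\Pi_\pm\); your observation that the equality can genuinely fail for dense orders is well taken, and your argument shows that only the one-sided bound is needed.
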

 \begin{remark}
 In Equation \eqref{eq:limits} we may write
  \(\omega_{a,+} =\lim_{b\downarrow a}\omega_{a,\widetilde{b}}\) as a monotonely increasing limit, and similarly 
  \(\omega_{a,-} =\lim_{b\uparrow a}\omega_{a,\widetilde{b}}\).
 \end{remark}
\begin{proof}
 By the symmetry
 between \(\prec\) and \(\succ\) in 
 Definition \ref{def:delineated},
 it is sufficient to deal with the case of
 states \(a\prec b\), 
 with \(a\) and \(b\) 
 lying in the same scattering class \(\Equiv\). 
 Consider the following multiplicative relationship (an inductive consequence of Definition \ref{def:delineated}):
 if \(a\prec u \prec v\prec \ldots \prec z\prec  b\) lie in \(\Equiv\) then
 \begin{equation}\label{eq:chaining}
  \omega_{a,\widetilde b}
  \quad\leq\quad
  \omega_{a,\widetilde u}
  \times
  \left(1-s_{\widetilde u}\right)\left(1-s_{\widetilde v}\right)\ldots\left(1-s_{\widetilde z}\right)
  \,.
 \end{equation}
 As the 
 ordered 
 chain \(a\prec u \prec v\prec \ldots \prec z\prec  b\) is refined, 
 so the product
 \(\left(1-s_{\widetilde u}\right)\left(1-s_{\widetilde v}\right)\ldots\left(1-s_{\widetilde z}\right)\) decreases.
 Taking the limit over the lattice set of refinements, 
 we obtain an upper bound for \(p_{a,\widetilde b}\) in terms of a (typically infinite) product:
 \begin{equation}\label{eq:inequality1}
  p_{a,\widetilde b}
  \quad\leq\quad
  \omega_{a,+}
  \times
  \left(\prod_{z\in\Equiv\;:\; a\prec z\prec b}(1-s_{\widetilde z})\right)
  s_{\widetilde b}\,.
 \end{equation}
Similarly, when \(b\prec a\), we obtain
 \begin{equation}\label{eq:inequality2}
  p_{a,\widetilde b}\quad\leq\quad
    \omega_{a,-}
  \times
  \left(\prod_{z\in\Equiv\;:\; b\prec z\prec a}(1-s_{\widetilde z})\right)
  s_{\widetilde b}
\,.
 \end{equation}
 
 Using simple algebra and then taking limits over successive refinements,
\begin{equation}\label{eq:identity1}
 \sum_{z\in\Equiv\;:\;a\prec z}\left(\prod_{c\in\Equiv\;:\; a\prec c\prec z}(1-s_{\widetilde c})\right)s_{\widetilde z}
 \quad=\quad 
 1 - \prod_{c\in\Equiv\;:\; a\prec c}(1-s_{\widetilde c})
 \quad\leq\quad 1\,,
\end{equation}
with equality holding if and only if \(\sum_{c\in\Equiv\;:\; a\prec c}s_{\widetilde c}\) diverges 
or one of the \(s_{\widetilde c}\) is equal to \(1\).
Likewise
\begin{equation}\label{eq:identity2}
 \sum_{z\in\Equiv\;:\;z\prec a}\left(\prod_{c\in\Equiv\;:\; z\prec c\prec a}(1-s_{\widetilde c})\right)s_{\widetilde z}
 \quad\leq\quad 1\,,
\end{equation}
with equality holding if and only if \(\sum_{c\in\Equiv\;:\; c\prec a}s_{\widetilde c}\) diverges 
or one of the \(s_{\widetilde c}\) is equal to \(1\).

Finally it has been stipulated that \(\omega_{a,\widetilde y}+\omega_{a,\widetilde x}\leq1\)
when \(x\prec a\prec y\) (Definition \ref{def:delineated}) and therefore (using definition \eqref{eq:limits}) it is the case that
\(\omega_{a,-}+\omega_{a,+}\leq1\).
Consequently we can deduce
\begin{multline}\label{eq:grand-finale}
 1\quad=\quad\sum_{z} p_{a,\widetilde z}\quad=\quad 
 \sum_{z\in\Equiv\;:\; z\prec a} \omega_{a,\widetilde z} s_{\widetilde z}
\;+\;
\sum_{z\in\Equiv\;:\; a\prec z} \omega_{a,\widetilde z} s_{\widetilde z}
\\
\;\leq\;
\omega_{a,-} \sum_{z\in\Equiv\;:\;z\prec a}\left(\prod_{c\in\Equiv\;:\; z\prec c\prec a}(1-s_{\widetilde c})\right)s_{\widetilde z}
\;+\;
\omega_{a,+} \sum_{z\in\Equiv\;:\;a\prec z}\left(\prod_{c\in\Equiv\;:\; a\prec c\prec z}(1-s_{\widetilde c})\right)s_{\widetilde z}
\\
\quad\leq\quad
\omega_{a,-}+\omega_{a,+}
\quad\leq\quad
1\,.
\end{multline}
Thus all these inequalities become equalities, 
also forcing equality for 
\eqref{eq:inequality1}
and
\eqref{eq:inequality2},
and 
\eqref{eq:identity1} and \eqref{eq:identity2}.
The proof is completed by noting that
convergence of the infinite sum
\(\sum_{z} p_{a,\widetilde z}=1\)
now
forces
\(\omega_{a,+}\prod_{z\in\Equiv\;:\; a\prec z}(1-s_{\widetilde z})=\omega_{a,-}\prod_{z\in\Equiv\;:\; z\prec a}(1-s_{\widetilde z})=0\).
This is forced because \eqref{eq:grand-finale}
becomes a sequence of equalities:
hence for example either \(\omega_{a,+}=0\)
or \( \sum_{z\in\Equiv\;:\;a\prec z}\left(\prod_{c\in\Equiv\;:\; a\prec c\prec z}(1-s_{\widetilde c})\right)s_{\widetilde z}=1\):
and in the second case 
(by the reasoning following \eqref{eq:identity1}) either
at least one of the constituent
\(s_{\widetilde{z}}\) is equal to \(1\)
or the sum \(\sum_{c\in\Equiv\;:\; a\prec c}s_{\widetilde c}\) diverges
-- and in either case
\(\prod_{z\in\Equiv\;:\; a\prec z}(1-s_{\widetilde z})=0\).
\end{proof}

The following is an immediate corollary of Theorem \ref{thm:delineated}.
\begin{cor}\label{cor:final-divergence}
For a delineated scattering process, if \(a\in\Equiv\) is a state in a
scattering class then 
\begin{align}
 \prod_{c\in\Equiv\;:\;a\prec c}\left(1- s_{\widetilde{c}}\right)
 \quad&=\quad 0 
 \qquad\text{ if } \omega_{a,+}>0\,,
 \nonumber
 \\
 \prod_{c\in\Equiv\;:\;c\prec a} \left(1-s_{\widetilde{c}}\right)
 \quad&=\quad 0 
 \qquad\text{ if } \omega_{a,-}>0\,.
\label{eq:closed-end}
\end{align}
\end{cor}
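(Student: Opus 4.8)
The plan is to read off both identities directly from the final conclusion of Theorem \ref{thm:delineated}, which already asserts that
\[
 \omega_{a,+}\prod_{z\in\Equiv\;:\; a\prec z}(1-s_{\widetilde z})\quad=\quad\omega_{a,-}\prod_{z\in\Equiv\;:\; z\prec a}(1-s_{\widetilde z})\quad=\quad 0\,.
\]
First I would note that each of these expressions is a product of two non-negative real numbers: a limiting transmission probability \(\omega_{a,+}\) or \(\omega_{a,-}\) lying in \([0,1]\), and an infinite product of factors \(1-s_{\widetilde z}\in[0,1)\). Second, for the \(+\) case I would argue by the trivial fact that if a product of two non-negative reals vanishes and the first factor is strictly positive, then the second factor must vanish: assuming \(\omega_{a,+}>0\) therefore forces \(\prod_{c\in\Equiv\;:\;a\prec c}(1-s_{\widetilde c})=0\), which is the first claimed identity. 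The \(-\) case is obtained identically, replacing \(\omega_{a,+}\) by \(\omega_{a,-}\) and the index range \(a\prec c\) by \(c\prec a\).

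The only point that might appear to need care is the interpretation of the infinite product, so that the phrase ``the product vanishes'' is a genuine assertion rather than a formal manipulation. This, however, was already settled inside the proof of Theorem \ref{thm:delineated}: the products \(\prod_{z\in\Equiv\;:\;a\prec z}(1-s_{\widetilde z})\) are defined as decreasing limits of finite subproducts over the lattice of refinements, exactly as in the passage from \eqref{eq:chaining} to \eqref{eq:inequality1}. Consequently there is no substantive obstacle; the corollary is purely the contrapositive restatement of the vanishing-product conclusion of Theorem \ref{thm:delineated}, and I expect the entire argument to occupy a single line once that conclusion is invoked.
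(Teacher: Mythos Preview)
Your proposal is correct and matches the paper's approach exactly: the paper simply declares this an immediate corollary of Theorem~\ref{thm:delineated}, and the only content is precisely the observation you make, that the vanishing of \(\omega_{a,\pm}\prod(1-s_{\widetilde z})\) together with \(\omega_{a,\pm}>0\) forces the infinite product to vanish.
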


Nevertheless, the sum of scattering probabilities
has a local summability property.
\begin{cor}\label{cor:local-summability}
Consider a scattering class \(\Equiv\) 
for a delineated scattering process.
If \(a\prec b\in\Equiv\) then
\[
 \sum_{c\in\Equiv\;:\; a\preceq c \preceq b} s_{\widetilde{c}}
 \quad<\quad\infty\,.
\]
\end{cor}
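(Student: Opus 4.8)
The plan is to reduce the assertion to finitely many ``bridging'' estimates, each obtained by converting positivity of a single transmission probability into convergence of a product of the form \(\prod(1-s_{\widetilde z})\), and then to cover the order-interval \([a,b]\) by the finitely many sub-intervals supplied by a connecting chain between \(a\) and \(b\).

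First I would record the only analytic input: for scattering probabilities \(s_{\widetilde z}\in(0,1]\) indexed over a countable set, the product \(\prod_z(1-s_{\widetilde z})\) is strictly positive if and only if \(\sum_z s_{\widetilde z}<\infty\) (in particular, positivity forces every factor to satisfy \(s_{\widetilde z}<1\)). This is precisely the bridge between the multiplicative language of Theorem \ref{thm:delineated} and the additive statement to be proved. Next, since \(a\) and \(b\) lie in the same scattering class we have \(a\sim b\); as \(a\neq b\), the remark following Definition \ref{def:equivalence} supplies a finite chain \(a=b_0,b_1,\ldots,b_n=b\) for which, at each index \(i\), either \(\omega_{b_i,\widetilde b_{i+1}}>0\) or \(\omega_{b_{i+1},\widetilde b_i}>0\).

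Fixing \(i\) and relabelling so that \(b_i\prec b_{i+1}\), I would observe that whichever of the two transmission probabilities is positive, the relevant case of the product formula \eqref{eq:concrete} expresses it as \(\omega_{b_i,+}\) (respectively \(\omega_{b_{i+1},-}\)) multiplied by the \emph{same} product \(\prod_{b_i\prec z\prec b_{i+1}}(1-s_{\widetilde z})\). Hence that product is forced to be strictly positive, and the elementary fact yields \(\sum_{b_i\prec z\prec b_{i+1}}s_{\widetilde z}<\infty\); adding the two endpoint contributions \(s_{\widetilde b_i},s_{\widetilde b_{i+1}}\leq1\) gives \(\sum_{c\in\Equiv\;:\;b_i\preceq c\preceq b_{i+1}}s_{\widetilde c}<\infty\). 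Writing \(I_i\) for the closed order-interval between \(b_i\) and \(b_{i+1}\), consecutive intervals share the endpoint \(b_{i+1}\), so their union is a single order-interval containing every \(b_j\), and in particular containing \([a,b]\) (since \(a=b_0\) and \(b=b_n\)). Therefore \(\sum_{a\preceq c\preceq b}s_{\widetilde c}\) is dominated by \(\sum_{i=0}^{n-1}\sum_{c\in I_i}s_{\widetilde c}\), a finite sum of finite terms.

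The main obstacle to keep in view is that the total order on \(\Equiv\) may be dense (as it is for \RRF on \(\Pi\)), so one cannot argue via ``successive'' states; moreover a single transmission probability \(\omega_{a,\widetilde b}\) may well vanish even though \(a\sim b\), and the connecting chain may wander outside \([a,b]\). The covering argument sidesteps both difficulties: enlarging the covered interval only weakens the (upper) bound, while the overlaps at shared endpoints guarantee that the union of the \(I_i\) actually reaches across all of \([a,b]\).
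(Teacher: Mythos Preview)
Your proof is correct and follows essentially the same route as the paper's: both use the finite connecting chain from the definition of the scattering class, convert positivity of each transmission probability into positivity of the corresponding product \(\prod(1-s_{\widetilde z})\) via \eqref{eq:concrete}, deduce local summability on each sub-interval, and then cover \([a,b]\) by these finitely many overlapping intervals. The only cosmetic difference is that the paper isolates the states with \(s_{\widetilde c}=1\) explicitly before invoking the infinite-product criterion, whereas you absorb this into the observation that a strictly positive product forces every factor to be strictly less than \(1\); both formulations amount to the same thing.
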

\begin{proof}
Since \(a \prec b\in\Equiv\), there must be 
a finite chain of states
\(a = c_0, c_1, \ldots, c_n, c_{n+1} = b\) 
satisfying
either 
\(\omega_{c_{i-1}\widetilde{c}_i}>0\)
or 
\(\omega_{c_{i}\widetilde{c}_{i-1}}>0\)
for each \(i\).
It follows from \eqref{eq:total-order} 
and the representation provided by Theorem \ref{thm:delineated}
that if \(a \prec c \prec b\) and \(s_c=1\) then 
the finite chain \(c_1\), \ldots, \(c_n\) has to include \(c\).
Hence there can be at most \(n\) distinct states \(c\)
with \(a\prec c \prec b\) and \(s_{\widetilde{c}}=1\).

Moreover, applying \eqref{eq:concrete} of 
Theorem \ref{thm:delineated}
 and the theory of infinite products, 
 if
 one of 
 \(\omega_{c_{i-1}\widetilde{c}_i}>0\)
 or
  \(\omega_{c_{i}\widetilde{c}_{i-1}}>0\),
then
 \(\sum_{c\in\Equiv\;:\; c_{i-1}\prec c \prec c_i} s_{\widetilde{c}}<\infty\)
 if
   \(c_{i-1}\prec c_i\),
while
if \(c_{i}\prec c_{i-1}\)
 then 
 \(\sum_{c\in\Equiv\;:\; c_{i}\prec c \prec c_{i-1}} s_{\widetilde{c}}<\infty\).

Now dissection of the range of summation 
then shows that
 \[
  \sum_{c\in\Equiv\;:\; a\preceq c \preceq b} s_{\widetilde{c}}
  \quad\leq\quad
  \sum_{i=1}^{n+1}\left(
  s_{\widetilde{c}_{i-1}}
  +
  \sum_{c\in\Equiv\;:\;c\text{ lying between }c_{i-1}, c_i} s_{\widetilde{c}}
  \right)
  + s_{\widetilde{b}}
  \quad<\quad \infty\,.
 \]
\end{proof}

We isolate a particular situation 
which will be important later on.
\begin{defn}
 Consider a delineated scattering process.
 If \(\omega_{a,\pm}=\half\) for all states \(a\)
 (for \(\omega_{a,\pm}\) defined as in equation \eqref{eq:limits} 
 of Theorem \ref{thm:delineated})
 then we say that the delineated scattering process is \emph{balanced}.
\end{defn}

Useful structure is added if the abstract scattering
representation concerns a
Markov chain
satisfying dynamical detailed balance
(see for example \citealp[Theorem 1.14 and preceding material]{Kelly-1979}). 
In this case equations of detailed balance relate
(a) an invariant measure \(\pi\) defined on the state-space,
(b) the transition probabilities, and 
(c) an
involution of state-space 
which can be thought of as corresponding to reversal of direction of travel.
\begin{defn}
\label{def:reversable}
Let \((p_{a,b})\) be the transition matrix of a Markov chain
admitting an abstract scattering
representation by \(p_{a,b}=\omega_{a,b} s_b\)
(so in particular it is necessary that \(p_{a,a}=0\)).
 The process governed by \(p_{a,b}=\omega_{a,b} s_b\) 
 satisfies \emph{dynamical detailed balance}
 (is \emph{dynamically reversible) with invariant measure 
 \(\pi\)}
 if
 there is a state-space involution \(a \leftrightarrow\widetilde{a}\) 
 with \(p_{a,\widetilde{a}}=0\),
 and \(\pi\) is a non-negative measure
 on the state-space, 
 with involution and measure related to \(p_{a,b}\) as follows:
 \begin{enumerate}[\((a)\)]
  \item\label{item:conjugacy}
  \(\pi_a = \pi_{\widetilde{a}}\) for all states \(a\);
  \item\label{item:reversible}
  \(\pi_a p_{a,\widetilde b}=\pi_a\omega_{a,\widetilde b} s_{\widetilde b} = \pi_{b}\omega_{{b},\widetilde{a}} s_{\widetilde{a}}=\pi_{b}p_{{b},\widetilde{a}}\) 
  for all states \(a\neq b\);
  \item\label{item:non-triviality} \(\pi_a>0\) for all states \(a\).
  \item\label{item:no-self-transmission}
  \(\omega_{a,\widetilde a}=0\) for all states \(a\) (so \(p_{a,\widetilde a}=0\)).
 \end{enumerate}
  Additionally, the abstract scattering representation
  is said to be \emph{unbiased dynamically reversible} if 
  \(\omega_{a,\widetilde b}=\omega_{{b},\widetilde{a}}\) for all states \(a, b\).
\end{defn}
Note that the (typically \(\sigma\)-finite) 
measure \(\pi\) 
is not normalized: therefore the \(\pi_a\) are defined only up to a common multiplicative constant.

 The conditions \ref{item:conjugacy} and \ref{item:reversible} amount to the assertion 
 that the chain is statistically identical to its time-reversal, 
 so long as we also use the involution to reverse the ``direction of travel'' of the chain.

Unbiased dynamical detailed balance 
refers primarily to the scattering process representation rather than
the Markov chain.
Under unbiased dynamical detailed balance,
condition \ref{item:reversible} is equivalent to the following simpler condition
which does not involve the transmission probabilities:
 \begin{enumerate}[label=\((\alph*')\), start=2]
  \item  \label{item:reversible-adapted}  
\(\pi_a s_{\widetilde b} = \pi_{b} s_{\widetilde{a}}\) for all states \(a\neq b\) such that \(\omega_{a,\widetilde b}=\omega_{b,\widetilde a}>0\).
 \end{enumerate}

 \begin{remark}\label{rem:common-involution}
  When we consider a Markov chain which is both an
  \((a\mapsto\widetilde{a}, \EquivClass)\)-delineated scattering process
  and satisfies dynamical detailed balance, 
  then we will suppose the same involution 
  \(a\mapsto\widetilde{a}\)
 is used in the definition of delineation and in the definition of
 dynamical reversibility.
 In this case if the delineated scattering process is balanced then Theorem \ref{thm:delineated} implies that
 it is automatically unbiased as a scattering process satisfying dynamical reversibility. We will describe such a process as a balanced delineated
 reversible scattering process.
 \end{remark}

 \begin{remark}
 It is a consequence of conditions \ref{item:reversible} and \ref{item:non-triviality}
 that \(\omega_{a,\widetilde b}>0\) if and only if \(\omega_{b,\widetilde a}>0\).
 For \(\omega_{a,\widetilde b}>0\) implies \(p_{a,\widetilde b}>0\) (since \(s_{\widetilde b}>0\) for all \(b\)).
 Since \(\pi_a>0\) and \(\pi_b>0\) by condition \ref{item:non-triviality},
 it follows from condition \ref{item:reversible} that \(p_{b,\widetilde a}>0\) and hence \(\omega_{b,\widetilde a}>0\).
\end{remark}

 As noted above, in the context of \RRF
 on \(\Pi\), we always consider the state-space involution 
 supplied by
 \((\Line_-,\Line_0) \longleftrightarrow (\Line_0,\Line_-)\).
 Setting \(a=(\Line_-,\Line_0)\) and \(b=(\Line_0,\Line_+)\)
 for distinct \(\Line_-,\Line_0,\Line_+\in\Pi\), 
 unbiased dynamical reversibility means that
 transmission along \(\Line_0\) from \((\Line_-,\Line_0)\) 
 to \((\Line_0,\Line_+)\)
 has the same probability as transmission in the reverse direction along \(\Line_0\) from \((\Line_+,\Line_0)\) to \((\Line_0,\Line_-)\).

Recall the representation of transition probabilities
\eqref{eq:concrete} for a delineated scattering process.
The choices of \(\omega_{a,\pm}\) 
are rather more constrained than might at first be supposed.
\begin{cor}\label{cor:symmetry1}
Consider a delineated scattering process satisfying 
dynamical reversibility
 (not necessarily balanced or unbiased), and 
 choose states \(a\prec b\prec c\)
with \(\omega_{a,\widetilde c}>0\), equivalently \(\omega_{c,\widetilde a}>0\).
Then \(\omega_{b,+}=\omega_{b,-}=\frac12\).
\end{cor}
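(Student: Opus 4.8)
The plan is to write every transmission probability that appears in a detailed-balance relation in the concrete product form supplied by Theorem~\ref{thm:delineated}, and then to play the detailed-balance relations for the three pairs \((a,b)\), \((b,c)\) and \((a,c)\) against one another until \(\omega_{b,+}\) and \(\omega_{b,-}\) are forced to coincide. Throughout I abbreviate the two intervening products as \(P=\prod_{z\in\Equiv\;:\;a\prec z\prec b}(1-s_{\widetilde z})\) and \(Q=\prod_{z\in\Equiv\;:\;b\prec z\prec c}(1-s_{\widetilde z})\).

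First I would read off from \eqref{eq:concrete}, cancelling the strictly positive scattering factors, the identities
\[
\omega_{a,\widetilde b}=\omega_{a,+}P,\quad \omega_{b,\widetilde a}=\omega_{b,-}P,\quad \omega_{b,\widetilde c}=\omega_{b,+}Q,\quad \omega_{c,\widetilde b}=\omega_{c,-}Q,
\]
and, using that \(\{z:a\prec z\prec c\}\) is the disjoint union of \(\{z:a\prec z\prec b\}\), \(\{b\}\) and \(\{z:b\prec z\prec c\}\),
\[
\omega_{a,\widetilde c}=\omega_{a,+}\,P\,(1-s_{\widetilde b})\,Q,\qquad \omega_{c,\widetilde a}=\omega_{c,-}\,P\,(1-s_{\widetilde b})\,Q.
\]
The hypothesis \(\omega_{a,\widetilde c}>0\) now does the essential work: a product of factors each lying in \([0,1]\) is positive only if every factor is positive, so \(P>0\), \(Q>0\) and \(1-s_{\widetilde b}>0\). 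This is precisely the positivity I shall need in order to cancel common factors below, and it is the only place where \(\omega_{a,\widetilde c}>0\) is indispensable.

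Next I would apply condition~\ref{item:reversible} of Definition~\ref{def:reversable} to each of the three pairs. Substituting the forms above and cancelling the positive quantities \(P\), \(Q\), \((1-s_{\widetilde b})\) just established, this leaves the three scalar equations
\[
\pi_a\omega_{a,+}\,s_{\widetilde b}=\pi_b\omega_{b,-}\,s_{\widetilde a},\qquad \pi_b\omega_{b,+}\,s_{\widetilde c}=\pi_c\omega_{c,-}\,s_{\widetilde b},\qquad \pi_a\omega_{a,+}\,s_{\widetilde c}=\pi_c\omega_{c,-}\,s_{\widetilde a}.
\]
Finally I would eliminate the endpoint quantities \(\pi_a\omega_{a,+}\) and \(\pi_c\omega_{c,-}\): solving the first two relations for them and substituting into the third causes the factors \(s_{\widetilde a},s_{\widetilde b},s_{\widetilde c}\) (all positive) to cancel and yields \(\pi_b\omega_{b,-}=\pi_b\omega_{b,+}\). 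Since \(\pi_b>0\) by condition~\ref{item:non-triviality}, this gives \(\omega_{b,+}=\omega_{b,-}\), and combining with \(\omega_{b,-}+\omega_{b,+}=1\) from Theorem~\ref{thm:delineated} forces \(\omega_{b,+}=\omega_{b,-}=\tfrac12\); note that neither the balanced nor the unbiased hypothesis is used, consistent with the statement. The only genuinely delicate point is the positivity bookkeeping in the first step; once the common products are known to be nonzero, the remainder is routine linear elimination.
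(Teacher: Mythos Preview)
Your proof is correct and follows essentially the same route as the paper: both arguments use the product representation of Theorem~\ref{thm:delineated} together with the positivity forced by \(\omega_{a,\widetilde c}>0\), then apply dynamical detailed balance to the three pairs \((a,b)\), \((b,c)\), \((a,c)\) and eliminate to obtain \(\omega_{b,-}=\omega_{b,+}\). The only cosmetic difference is that the paper organises the three relations as equalities of the ratios \(\tfrac{\pi_\cdot}{s_{\widetilde\cdot}}\omega_{\cdot,\pm}\), whereas you keep the scattering factors on the right-hand side before substituting; the algebra is identical.
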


\begin{proof}
Without loss of generality, suppose \(a\prec b \prec c\).
 By \eqref{eq:inequality1}, \(\omega_{a,\widetilde c}=\omega_{a,+}\prod_{u\;:\;a \prec u \prec c}(1-s_{\widetilde u})\), and so positivity of
 \(\omega_{c,\widetilde a}\) forces positivity of \(\prod_{u\;:\;a \prec u \prec c}(1-s_{\widetilde u})\).
 Let \(\pi\) be the invariant measure.
 Using \(\pi_a p_{a,\widetilde c}=\pi_c p_{c, \widetilde a}\),
 and representations derived from \eqref{eq:inequality1} and \eqref{eq:inequality2},
 we can deduce that \(\pi_a\omega_{a,+}s_{\widetilde c} = \pi_c\omega_{c,-}s_{\widetilde a}\)\,.
 Write this as
 \[
  \frac{\pi_a}{s_{\widetilde a}} \omega_{a,+} \quad=\quad \frac{\pi_c}{s_{\widetilde c}} \omega_{c,-} \,.
 \]
Likewise it follows that
 \begin{equation*}
  \frac{\pi_a}{s_{\widetilde a}} \omega_{a,+} \quad=\quad \frac{\pi_b}{s_{\widetilde b}} \omega_{b,-} 
  \qquad\text{ and }\qquad
  \frac{\pi_b}{s_{\widetilde b}} \omega_{b,+} \quad=\quad \frac{\pi_c}{s_{\widetilde c}} \omega_{c,-} 
  \,.
 \end{equation*}
We deduce
 \[
  \frac{\pi_b}{s_{\widetilde b}} \omega_{b,-} \quad=\quad \frac{\pi_b}{s_{\widetilde b}} \omega_{b,+} \,,
 \]
and hence (since \(\pi_b/s_{\widetilde b}>0\), and Theorem \ref{thm:delineated} yields \(\omega_{b,+}+\omega_{b,-}=1\)) it follows that \(\omega_{b,+}=\omega_{b,-}=\frac12\).
\end{proof}

It is a consequence that, for all delineated scattering processes 
satisfying dynamical reversibility, the \(\omega_{b,\pm}\) probabilities are all equal to \(\tfrac12\) except perhaps in the special case 
when \(s_{\widetilde b}=1\). 
This conclusion can be extended
to all states in the case of a delineated scattering process,
satisfying unbiased dynamical reversibility,
and
with sufficiently many states \(b\) with
\(s_{\widetilde b}<1\). (This includes the \SIRSNRRF which will 
be defined later.)

\begin{cor}\label{cor:symmetry2}
Consider a delineated scattering process
satisfying unbiased dynamical reversibility, such that if \(\omega_{a,\widetilde c}>0\) then there is a state \(b\) lying between \(a\) and \(c\).
In that case \(\omega_{a,+}=\omega_{a,-}=\frac12\) for all states \(a\)
and so the delineated scattering process is balanced.
\end{cor}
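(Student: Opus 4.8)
The plan is to reduce everything to Corollary \ref{cor:symmetry1}, which already shows that the \emph{middle} state of any triple straddling a positive transmission is balanced. Concretely, it suffices to prove that every state $a$ is strictly enclosed by a positive transmission, i.e. that there are $x\prec a\prec y$ in the scattering class $\Equiv$ of $a$ with $\omega_{x,\widetilde y}>0$; Corollary \ref{cor:symmetry1} then delivers $\omega_{a,+}=\omega_{a,-}=\tfrac12$ at once, and balancedness follows. Two structural facts drive the argument. First, inequality \eqref{eq:total-order} makes transmissions \emph{interval-like}: if $\omega_{a,\widetilde c}>0$ and $b$ lies between $a$ and $c$, then $\omega_{a,\widetilde b}>0$, so the states reached from $a$ in each direction form an order-interval. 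Second, condition \ref{item:reversible-adapted} gives $\pi_a/s_{\widetilde a}=\pi_b/s_{\widetilde b}$ whenever $\omega_{a,\widetilde b}=\omega_{b,\widetilde a}>0$, so the quantity $\phi(a):=\pi_a/s_{\widetilde a}$ is constant across every edge and hence constant on the (connected) scattering class.

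Next I would record the basic balance identity. Fix an edge $a\prec c$ (meaning $\omega_{a,\widetilde c}>0$). Writing $p_{a,\widetilde c}$ and $p_{c,\widetilde a}$ through \eqref{eq:concrete}, both carry the \emph{same} factor $\prod_{a\prec z\prec c}(1-s_{\widetilde z})$, which is positive precisely because $\omega_{a,\widetilde c}>0$. Substituting into the reversibility relation $\pi_a p_{a,\widetilde c}=\pi_c p_{c,\widetilde a}$, cancelling this common product, and invoking $\phi\equiv\text{const}$, one obtains the clean identity
\[
\omega_{a,+}\quad=\quad\omega_{c,-}\qquad\text{for every edge } a\prec c\,.
\]
This is the engine of the proof: it transports a known value of $\omega_{c,-}$ back to $\omega_{a,+}$.

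With these in hand the conclusion follows. Given $a$, Theorem \ref{thm:delineated} gives $\omega_{a,+}+\omega_{a,-}=1$, so $a$ carries an edge on at least one side; say $\omega_{a,+}>0$ (the other case is symmetric, using $\omega_{x,+}=\omega_{a,-}$ for an edge $x\prec a$). By the interval property $a$ transmits to every state in its right-reach, and as this reach is a non-degenerate order-interval I pick two states $c\prec c'$ inside it. The edge $a\prec c'$ then straddles $c$, so Corollary \ref{cor:symmetry1} yields $\omega_{c,-}=\tfrac12$; feeding this into the balance identity for the edge $a\prec c$ gives $\omega_{a,+}=\tfrac12$, whence $\omega_{a,-}=\tfrac12$ as well, for every state $a$.

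The main obstacle is exactly the step of producing two distinct states inside the one-sided reach of $a$ --- equivalently, ruling out that $a$ is an extreme or isolating state of $\Equiv$. This is where the hypothesis that a positive transmission always encloses an intermediate state is essential (it prevents reaches from collapsing to a point), together with connectivity of the scattering class and the fact that, in the motivating setting, scattering classes carry dense orders with no least or greatest element (as for the intersection points along a Poisson line). I would also keep an eye on degenerate states with $s_{\widetilde z}=1$, but these are controlled by the local-summability Corollary \ref{cor:local-summability} and do not disturb positivity of the relevant finite products.
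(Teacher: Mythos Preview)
Your proof is correct and follows essentially the same route as the paper: find a state in the reach of \(a\), use the hypothesis to produce an intermediate state that is straddled by a positive transmission, apply Corollary~\ref{cor:symmetry1} to that intermediate state, and then transport the value \(\tfrac12\) back to \(a\) via the identity \(\omega_{a,+}=\omega_{c,-}\) along an edge \(a\prec c\). The paper obtains this last identity more directly: unbiasedness gives \(\omega_{a,\widetilde b}=\omega_{b,\widetilde a}\) outright, and substituting the representations from Theorem~\ref{thm:delineated} and cancelling the common (positive) product yields \(\omega_{a,+}=\omega_{b,-}\) immediately, so your detour through constancy of \(\phi(a)=\pi_a/s_{\widetilde a}\) is not needed. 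Also, your closing concerns about density of the order and absence of extremal elements are superfluous here: the hypothesis of the corollary supplies the needed intermediate state directly, and the interval property you already noted ensures that state lies in the reach of \(a\).
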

\begin{proof}
First note for any state \(a\) there must be another state \(c\) with \(\omega_{a,\widetilde c}>0\), for otherwise 
all 
feasible
transitions from \(a\) would
actually have zero probability. 
According to the stated conditions there must be a state \(b\) lying between \(a\) and \(c\):
suppose without loss of generality that \(a\prec b\). 
Now \(\omega_{b,-}=\tfrac12\) by Corollary \ref{cor:symmetry1}: moreover 
the representation \eqref{eq:inequality1} applied to \(\omega_{a,\widetilde c}\) implies that \(\prod_{u\;:\;a \prec u \prec b}(1-s_{\widetilde u})<1\).

The unbiasedness condition \ref{item:reversible-adapted} of Definition \ref{def:reversable} asserts that \(\omega_{a,\widetilde b}=\omega_{b,\widetilde a}\),  
together with
the equations \(\omega_{a,\widetilde b}=\omega_{a,+}\prod_{u\;:\;a \prec u \prec b}(1-s_{\widetilde u})\)
and \(\omega_{b,\widetilde a}=\omega_{b,-}\prod_{u\;:\;a \prec u \prec b}(1-s_{\widetilde u})\),
then imply
that 
\(\omega_{a,+}=\omega_{b,-}\). But \(\omega_{b,-}=\tfrac12\) by Corollary \ref{cor:symmetry1}.
The argument is concluded by using \(\omega_{a,+}+\omega_{a,-}=1\), as established in Theorem \ref{thm:delineated}.
\end{proof}

Within a fixed scattering class of the form specified in Definition
\ref{def:equivalence},
an immediate algebraic consequence 
of general unbiased dynamic reversibility
is that
the
equilibrium measure of a state is proportional to the scattering probability
at that state.
This, together with exploitation of delineated
structure as expressed in Theorem \ref{sec:ASP}, will allow
us to describe suitable \RRF efficiently in terms of 
a prescribed positive 
function on scattering classes (Theorem \ref{thm:mh-ratio} below).
\begin{lem}\label{lem:reverse1}
Consider an unbiased dynamically reversible scattering process.
 Suppose \(a\sim b\); if \(\pi\) is the invariant measure
 then
 \begin{equation}\label{eq:reverse1}
  \pi_a / s_{\widetilde a} \quad=\quad \pi_b / s_{\widetilde b}\,.
 \end{equation}
 For a scattering class \(\Equiv\)
 defined as in Definition \ref{def:equivalence}, 
 we write \(\kappa(\Equiv)\) for the common value of \(\pi_a / s_{\widetilde a}\) for \(a\in\Equiv\).
\end{lem}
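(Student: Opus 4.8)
The plan is to prove the ratio identity \eqref{eq:reverse1} first for the generating relation $\leftrightsquigarrow$ of Definition \ref{def:equivalence}, and then to propagate it along finite connecting chains to obtain it for the full equivalence relation $\sim$. The whole argument is algebraic, driven by detailed balance together with unbiasedness, and the pay-off is that the transmission probabilities $\omega$ cancel out entirely, leaving a relation purely between $\pi$ and the scattering probabilities $s$.

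First I would treat the one-step case. Suppose $\omega_{a,\widetilde b}>0$ for distinct states $a\neq b$, so that $a\leftrightsquigarrow b$; by unbiasedness ($\omega_{a,\widetilde b}=\omega_{b,\widetilde a}$) the factor $\omega_{b,\widetilde a}$ is equally positive. Detailed balance (condition \ref{item:reversible} of Definition \ref{def:reversable}) gives
\[
 \pi_a\,\omega_{a,\widetilde b}\,s_{\widetilde b}
 \quad=\quad
 \pi_b\,\omega_{b,\widetilde a}\,s_{\widetilde a}\,.
\]
Since $\omega_{a,\widetilde b}=\omega_{b,\widetilde a}>0$, this common positive factor may be cancelled, yielding $\pi_a s_{\widetilde b}=\pi_b s_{\widetilde a}$, which is precisely condition \ref{item:reversible-adapted}. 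Dividing by the strictly positive quantity $s_{\widetilde a}s_{\widetilde b}$ (positivity of $s$ is part of the abstract scattering representation of Definition \ref{def:abstract-s-p}) gives
\[
 \pi_a / s_{\widetilde a}
 \quad=\quad
 \pi_b / s_{\widetilde b}\,,
\]
which is \eqref{eq:reverse1} for states linked by $\leftrightsquigarrow$.

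Next I would extend to general $a\sim b$. By Definition \ref{def:equivalence}, either $a=b$ (in which case \eqref{eq:reverse1} is trivial) or there is a finite chain $a=b_0,b_1,\dots,b_n=b$ with, for each $i$, either $\omega_{b_i,\widetilde{b}_{i+1}}>0$ or $\omega_{b_{i+1},\widetilde{b}_i}>0$. Under unbiasedness these two conditions coincide, so each consecutive pair satisfies $b_i\leftrightsquigarrow b_{i+1}$ with $\omega_{b_i,\widetilde{b}_{i+1}}=\omega_{b_{i+1},\widetilde{b}_i}>0$; in particular $b_i\neq b_{i+1}$, since $\omega_{c,\widetilde c}=0$ (condition \ref{item:no-self-transmission}), so the one-step argument applies to each link. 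Applying the one-step identity to each link gives $\pi_{b_i}/s_{\widetilde{b}_i}=\pi_{b_{i+1}}/s_{\widetilde{b}_{i+1}}$, and chaining these equalities along $i=0,\dots,n-1$ yields $\pi_a/s_{\widetilde a}=\pi_b/s_{\widetilde b}$, proving the lemma and justifying the definition of $\kappa(\Equiv)$ as the common value of $\pi_a/s_{\widetilde a}$ over a scattering class $\Equiv$.

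There is essentially no deep obstacle here: the only points requiring care are that unbiasedness is exactly what makes the transmission factor in detailed balance common to both sides (so it cancels and the transmission probabilities drop out completely), and that positivity of the $s_{\widetilde a}$ and the $\pi_a$ (condition \ref{item:non-triviality} and the requirement $s_a>0$ of Definition \ref{def:abstract-s-p}) licenses both the cancellation and the division. The transitivity step is then just transitivity of equality along the connecting chain, so the hardest ``work'' is merely the bookkeeping that confirms each link is genuinely an instance of the one-step case.
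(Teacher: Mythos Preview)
Your proof is correct and follows essentially the same approach as the paper: reduce to the generating relation $\omega_{a,\widetilde b}>0$ and then invoke condition \ref{item:reversible-adapted} (which you derive explicitly from detailed balance plus unbiasedness), with transitivity along finite chains handling the general case. The paper's version is simply terser, citing \ref{item:reversible-adapted} directly and leaving the chaining implicit.
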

\begin{proof}
 It suffices to establish \eqref{eq:reverse1} when \(\omega_{a,\widetilde b}>0\), equivalently \(\omega_{b,\widetilde a}>0\).
 But \eqref{eq:reverse1} holds in this case because of condition 
 \ref{item:reversible-adapted} of Definition \ref{def:reversable}.
\end{proof}
Since \(\pi\) is not normalized, the
\(\kappa(\Equiv)\)
 are defined only up to a common multiplicative constant.

\begin{cor}\label{cor:reverse2}
Consider a unbiased dynamically reversible scattering process.
Suppose \(a\in\Equiv_1\) and \(\widetilde a\in\Equiv_2\)
for scattering classes defined as in Definition \ref{def:equivalence} using the involution supplied by dynamical reversibility. Then
\begin{equation}\label{eq:reverse2}
 s_a / s_{\widetilde a} \quad=\quad \kappa(\Equiv_1)/\kappa(\Equiv_2)
\end{equation}
In particular, \(s_a=s_{\widetilde a}\) when \(a\)
is equivalent to its involution (thus, \(a\sim \widetilde a\)).
\end{cor}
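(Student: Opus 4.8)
The plan is to unwind the definition of $\kappa$ supplied by Lemma \ref{lem:reverse1} and then exploit the conjugacy of $\pi$ under the involution; the entire argument is a short computation. Recall that Lemma \ref{lem:reverse1} defines $\kappa(\Equiv)$ as the common value of $\pi_c/s_{\widetilde c}$ taken over states $c$ in the scattering class $\Equiv$; the positivity conditions \ref{item:non-triviality} and $s_c>0$ guarantee that each such ratio is well-defined and strictly positive. I would first confirm at the outset that $a$ and $\widetilde a$ genuinely lie in scattering classes in the sense of Definition \ref{def:equivalence}, which applies here precisely because the relevant involution is the one furnished by dynamical reversibility.

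First I would evaluate the two class-functions directly at the relevant states. Since $a\in\Equiv_1$, the defining property of $\kappa$ gives $\kappa(\Equiv_1)=\pi_a/s_{\widetilde a}$. Since $\widetilde a\in\Equiv_2$, the same property applied to the state $\widetilde a$ — and using that the involution is idempotent, so that $\widetilde{\widetilde a}=a$ — gives $\kappa(\Equiv_2)=\pi_{\widetilde a}/s_{\widetilde{\widetilde a}}=\pi_{\widetilde a}/s_a$.

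Taking the quotient of these two expressions yields
\[
 \frac{\kappa(\Equiv_1)}{\kappa(\Equiv_2)}
 \quad=\quad
 \frac{\pi_a/s_{\widetilde a}}{\pi_{\widetilde a}/s_a}
 \quad=\quad
 \frac{\pi_a}{\pi_{\widetilde a}}\cdot\frac{s_a}{s_{\widetilde a}}\,.
\]
The conjugacy condition \ref{item:conjugacy} of Definition \ref{def:reversable} asserts $\pi_a=\pi_{\widetilde a}$, so the first factor is $1$ and the desired identity \eqref{eq:reverse2} follows. The special case is then immediate: if $a\sim\widetilde a$ then $a$ and $\widetilde a$ lie in the same scattering class, so $\Equiv_1=\Equiv_2$, whence $\kappa(\Equiv_1)/\kappa(\Equiv_2)=1$ and therefore $s_a=s_{\widetilde a}$.

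The argument is essentially pure bookkeeping, so there is no serious obstacle to overcome; the only point requiring care is the correct handling of the involution's idempotence $\widetilde{\widetilde a}=a$ when re-expressing $\kappa(\Equiv_2)$, since a careless reading would leave a spurious $s_{\widetilde{\widetilde a}}$ in place of $s_a$ and break the cancellation. All the substantive work has already been done in Lemma \ref{lem:reverse1} and in the conjugacy axiom, and this corollary simply combines the two.
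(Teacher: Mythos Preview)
Your proof is correct and follows essentially the same approach as the paper: apply Lemma \ref{lem:reverse1} at the states $a\in\Equiv_1$ and $\widetilde a\in\Equiv_2$ to express $\kappa(\Equiv_1)$ and $\kappa(\Equiv_2)$ in terms of $\pi$ and the scattering probabilities, then invoke the conjugacy condition $\pi_a=\pi_{\widetilde a}$ from Definition \ref{def:reversable}\ref{item:conjugacy}. The paper's version is simply more terse, writing $s_{\widetilde a}=\pi_a/\kappa(\Equiv_1)$ and $s_a=\pi_{\widetilde a}/\kappa(\Equiv_2)$ and concluding directly.
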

\begin{proof}
 By Lemma \ref{lem:reverse1}, \(s_{\widetilde a}=\pi_a/\kappa(\Equiv_1)\) likewise \(s_{a}=\pi_{\widetilde a}/\kappa(\Equiv_2)\).
 The result now follows from condition 
 \ref{item:conjugacy} of Definition \ref{def:reversable}.
\end{proof}

Since \(\pi_a=\pi_{\widetilde a}\), it follows from Lemma \ref{lem:reverse1}
that \(\pi_a\leq\min\{\kappa(\Equiv_1),\kappa(\Equiv_2)\}\) when
\(a\in\Equiv_1\) and \(\widetilde{a}\in\Equiv_2\). 
\begin{defn}\label{def:deficit}
 Consider a unbiased dynamically reversible scattering process.
Suppose \(a\in\Equiv_1\) and \(\widetilde a\in\Equiv_2\)
for scattering classes defined as in Definition \ref{def:equivalence}.
The \emph{deficit} \(\delta_a\) is given by
\[
 \pi_a  \quad=\quad
 (1-\delta_a) \min\{\kappa(\Equiv_1),\kappa(\Equiv_2)\}\,.
\]
\end{defn}
Note that the deficit \(\delta_a=\delta_{\widetilde{a}}\)
is left invariant by the involution, since \(\pi_a=\pi_{\widetilde{a}}\), while the involution 
\(a\mapsto\widetilde{a}\) simply exchanges
the two scattering classes involved with state \(a\).

The following interpretation of \(\kappa(\Equiv)\) clarifies its r\^ole.
\begin{cor}\label{cor:through-prob}
 Consider a balanced delineated reversible scattering process
 with invariant measure \(\pi\).
 Let \(\Equiv\) be a scattering class: if \(a\in\Equiv\) then
 \begin{equation}\label{eq:kappa-meaning}
  \kappa(\Equiv)
  \quad=\quad
  2 \sum_{c\in\Equiv\;:\;c\prec a} 
   \pi_c \omega_{c,\widetilde{a}}\,.
 \end{equation}
Hence \(\kappa(\Equiv)\) can be interpreted as 
measuring (invariantly)
the in-flow of the process arriving at 
the state \(a\in\Equiv\) from the left
side of \(\Equiv\setminus\{a\}\),
(alternatively, from the right side), 
but not necessarily stopping there.
\end{cor}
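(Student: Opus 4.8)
The plan is to expand the right-hand side of \eqref{eq:kappa-meaning} using the explicit transition-probability representation of Theorem \ref{thm:delineated} together with Lemma \ref{lem:reverse1}, and then to recognise the result as a telescoping sum that collapses to a single factor of \(\kappa(\Equiv)\). First I would note that every index \(c\) appearing in the sum lies in the scattering class \(\Equiv\) containing \(a\), so Lemma \ref{lem:reverse1} applies and \eqref{eq:reverse1} gives \(\pi_c = \kappa(\Equiv)\,s_{\widetilde c}\); this pulls the constant \(\kappa(\Equiv)\) outside the summation. Next, for each \(c\) with \(c\prec a\) I would apply the exact identity \eqref{eq:concrete} (with \(c\) in the role of the lower state), divide by the strictly positive scattering probability \(s_{\widetilde a}\), and invoke the balanced hypothesis \(\omega_{c,+}=\tfrac12\); this yields \(\omega_{c,\widetilde a}=\tfrac12\prod_{z\in\Equiv:\,c\prec z\prec a}(1-s_{\widetilde z})\).

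Substituting both facts, the right-hand side of \eqref{eq:kappa-meaning} becomes
\[
 2\sum_{c\in\Equiv:\,c\prec a}\pi_c\omega_{c,\widetilde a}
 \;=\;
 \kappa(\Equiv)\sum_{c\in\Equiv:\,c\prec a}
 s_{\widetilde c}\prod_{z\in\Equiv:\,c\prec z\prec a}(1-s_{\widetilde z})\,.
\]
After relabelling dummy indices, the remaining sum is precisely the left-hand side of \eqref{eq:identity2}, whose value is \(1-\prod_{c\in\Equiv:\,c\prec a}(1-s_{\widetilde c})\) (the mirror image of \eqref{eq:identity1}). It therefore remains only to verify that this telescoping sum attains the value \(1\) rather than merely being bounded by it. Here I would use that balancedness gives \(\omega_{a,-}=\tfrac12>0\), so Corollary \ref{cor:final-divergence} (equation \eqref{eq:closed-end}) forces \(\prod_{c\in\Equiv:\,c\prec a}(1-s_{\widetilde c})=0\); hence the sum equals \(1\) and the displayed quantity equals \(\kappa(\Equiv)\), as claimed. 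The alternative ``from the right'' version follows by the mirror argument, using \(\omega_{a,+}=\tfrac12\) together with \eqref{eq:identity1}.

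The computation is short, and the only point needing genuine care — the ``main obstacle'' such as it is — is the passage from the inequality in \eqref{eq:identity2} to the exact value \(1\). This is precisely where balancedness earns its keep: without the guarantee \(\omega_{a,-}>0\) one could not invoke Corollary \ref{cor:final-divergence} to annihilate the tail product, and the in-flow sum could fall strictly below \(\tfrac12\kappa(\Equiv)\). The probabilistic interpretation recorded in the statement is then immediate, since each term \(\pi_c\omega_{c,\widetilde a}\) measures the equilibrium in-flow that arrives at \(a\), transmitted from the left-neighbour \(c\) without necessarily scattering there, summed over all \(c\) on the left of \(a\) within \(\Equiv\).
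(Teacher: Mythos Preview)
Your proof is correct and follows essentially the same approach as the paper: reduce the sum to the telescoping expression appearing in \eqref{eq:identity2} and then argue that it attains the value \(1\). The only cosmetic difference is that the paper first applies dynamical reversibility \(\pi_c p_{c,\widetilde a}=\pi_a p_{a,\widetilde c}\) to flip the sum and then expands \(\omega_{a,\widetilde c}\) using \(\omega_{a,-}=\tfrac12\), whereas you pull out \(\kappa(\Equiv)\) via Lemma~\ref{lem:reverse1} and expand \(\omega_{c,\widetilde a}\) using \(\omega_{c,+}=\tfrac12\); both routes land on the identical telescoping sum, and your justification that it equals \(1\) (via Corollary~\ref{cor:final-divergence}) matches what the paper invokes from the proof of Theorem~\ref{thm:delineated}.
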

\begin{proof}
Apply dynamical reversibility:
 \begin{multline*}
\sum_{c\in\Equiv\;:\;c\prec a} 
 \pi_c \omega_{c,\widetilde{a}}s_{\widetilde{a}}
 \quad=\quad
\pi_a \sum_{c\in\Equiv\;:\;c\prec a} 
 \omega_{a,\widetilde{c}}s_{\widetilde{c}} 
 \quad=\quad
 \\
 \pi_a \omega_{a,-}\times
 \sum_{c\in\Equiv\;:\;c\prec a} 
 \left(\prod_{b\in\Equiv\;:\; c\preceq b\prec a}
 \left(1-s_{\widetilde{b}}\right)
 \right)
 s_{\widetilde{c}}
 \quad=\quad \half \pi_a
 \end{multline*}
 where the last step uses balance, and also the fact
 established in the proof of Theorem \ref{thm:delineated},
 that Inequality \eqref{eq:identity2} is in fact an equality.
 Equation \eqref{eq:kappa-meaning} now follows
 by multiplying through by \(2/s_{\widetilde{a}}\). 
\end{proof}
\begin{remark}
 Combining Equation \eqref{eq:kappa-meaning}
 with the equation corresponding to the right side of 
 \(\Equiv\setminus\{a\}\), we also obtain
 \begin{equation}\label{eq:kappa-meaning2}
  \kappa(\Equiv)
  \quad=\quad
  \sum_{c\in\Equiv\;:\;c\neq a} 
  \pi_c \omega_{c,\widetilde{a}}\,;
 \end{equation}
 so \(\kappa(\Equiv)\) measures (invariantly) the in-flow
 of the process arriving at \(a\) from any other state in \(\Equiv\),
 but not necessarily stopping there.
 (This alternate interpretation holds
 even if the delineated
 reversible scattering process is not balanced).
\end{remark}

 We conclude the discussion of abstract scattering processes by noting 
 a converse result: given an involution and a decomposition of
 state-space into candidate scattering classes
 with attached \(\kappa\) values, there
 are simply-specified assignments of scattering probabilities 
 (motivated by the constructions of Metropolis-Hastings Markov chain Monte Carlo
 -- see for example \citealp[Chapter 1]{GilksRichardsonSpiegelhalter-1995})
 which lead to valid delineated scattering processes satisfying 
 unbiased dynamical reversibility. For this, we require
 that all deficits \(\delta_a\) 
 (Definition \ref{def:deficit}) are set identically equal to zero.
 
\begin{thm}\label{thm:mh-ratio}
 Given a state-space \(S\) supporting an involution \(a\mapsto\widetilde{a}\)
 and a decomposition
 \(\EquivClass=\{\Equiv_1,\Equiv_2,\ldots\}\)
 into disjoint subsets \(\Equiv_1\), \(\Equiv_2\), \ldots,
 a positive function \(\kappa:\Equiv\mapsto\kappa(\Equiv)>0\) defined on 
 \(\EquivClass\),
 and a total ordering on each scattering class \(\Equiv\in\EquivClass\), 
 consider the following
 ``zero-deficit''
 assignment of scattering probabilities:
\begin{equation}\label{eq:mh-ratio}
 s_{\widetilde a} \quad=\quad 
 \min\left\{1, \frac{\kappa(\Equiv_2)}{\kappa(\Equiv_1)}\right\} \qquad \text{ when } a\in\Equiv_1 \text{ and }\widetilde{a}\in\Equiv_2\,.
\end{equation}
Suppose 
for convenience
that no scattering class contains either maximal or minimal elements.
Taking
\(\omega_{a,\pm}=\half\) for all states \(a\), we can use the 
scattering probabilities to define transmission probabilities 
as in \eqref{eq:concrete} of Theorem \ref{thm:delineated}.
The resulting scattering representation corresponds to a
(balanced)
\((a\mapsto\widetilde{a},\EquivClass)\)-delineated scattering
process
 precisely when
 \begin{enumerate}[label=\({\ref{thm:mh-ratio}.\arabic*}\)]
  \item \label{condition:locally-finite}
  for each 
  \(\Equiv\in\EquivClass\),
  and for each \(a\prec b\in\Equiv\), 
  \[
   \sum_{c\in\Equiv\;:\; a\prec c\prec b} s_{\widetilde{c}} \quad<\quad\infty\,.
  \]
  \item \label{condition:extremely-divergent}
  for each 
  \(\Equiv\in\EquivClass\),
  and for each \(a\in\Equiv\),
  \[
   \prod_{c\in\Equiv\;:\; c\prec a} 
   \left(1-s_{\widetilde{c}}\right) \quad=\quad0\,,
   \qquad
   \prod_{c\in\Equiv\;:\; a\prec c} 
   \left(1-s_{\widetilde{c}}\right) \quad=\quad0\,.
  \]
 \end{enumerate}
  Finally this process satisfies unbiased dynamical reversibility, 
  and is therefore a balanced delineated reversible scattering process,
  with identically zero deficits and equilibrium
measure given by
 \[
  \pi_a \quad=\quad \min\left\{\kappa(\Equiv_1), \kappa(\Equiv_2)\right\} \qquad \text{ when } a\in\Equiv_1 \text{ and }\widetilde{a}\in\Equiv_2\,,
 \]
\end{thm}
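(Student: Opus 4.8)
The plan is to close the loop in what is really a self-referential construction: the transmission probabilities are \emph{defined} through formula \eqref{eq:concrete}, yet that formula was derived in Theorem \ref{thm:delineated} under the standing assumption that one already has a delineated scattering process. So the genuine work is to verify \emph{ab initio} that the pair \((s,\omega)\) manufactured from \eqref{eq:mh-ratio} and the balanced choice \(\omega_{a,\pm}=\half\) really does constitute such a process, and then to read off reversibility and the deficit claim. I would organise the argument into four blocks: (i) necessity of the two conditions; (ii) sufficiency, i.e.\ that under \ref{condition:locally-finite} and \ref{condition:extremely-divergent} the construction is a balanced \((a\mapsto\widetilde a,\EquivClass)\)-delineated scattering process; (iii) unbiased dynamical reversibility with the stated \(\pi\); and (iv) the zero-deficit assertion.

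For necessity I would simply invoke the earlier corollaries: any delineated scattering process satisfies local summability by Corollary \ref{cor:local-summability}, giving \ref{condition:locally-finite}, while Corollary \ref{cor:final-divergence}, applied with \(\omega_{a,\pm}=\half>0\), forces both infinite products of \ref{condition:extremely-divergent} to vanish. For sufficiency I would first check the two-sided inequality \eqref{eq:total-order}. The bound \(\omega_{b,\widetilde a}+\omega_{b,\widetilde c}\leq1\) is immediate from \(\omega_{b,\widetilde\cdot}\leq\half\); for the decay inequality I would factor \(\prod_{z:\,a\prec z\prec c}(1-s_{\widetilde z})\) across an intermediate \(b\), exhibiting \(\omega_{a,\widetilde c}\) as \(\omega_{a,\widetilde b}(1-s_{\widetilde b})\) times a further sub-product bounded by \(1\). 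I would then show the transition matrix is stochastic by summing \eqref{eq:concrete} and invoking the telescoping identities \eqref{eq:identity1} and \eqref{eq:identity2} (valid for arbitrary \(s\)-values): the row-sum from \(a\) collapses to \(\half\bigl(1-\prod_{c:\,a\prec c}(1-s_{\widetilde c})\bigr)+\half\bigl(1-\prod_{c:\,c\prec a}(1-s_{\widetilde c})\bigr)\), and condition \ref{condition:extremely-divergent} makes both products vanish so that this equals \(1\). Finally I would use \ref{condition:locally-finite} to confirm that the prescribed \(\EquivClass\) really is the intrinsic family of scattering classes of Definition \ref{def:equivalence}: local summability keeps the finite sub-products positive as \(b\downarrow a\), so that \(\sup\{\omega_{a,\widetilde b}:a\prec b\}=\half\) in agreement with \eqref{eq:limits} (this is genuinely needed in the dense ordering of the motivating \SIRSNRRF), and it links distinct elements of a prescribed class by chains of positive transmission.

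The reversibility block is the algebraic heart. Unbiasedness \(\omega_{a,\widetilde b}=\omega_{b,\widetilde a}\) follows at once from the balanced form of \eqref{eq:concrete}: for \(a\prec b\) in a common class both quantities equal \(\half\prod_{z:\,a\prec z\prec b}(1-s_{\widetilde z})\), the first through \(\omega_{a,+}\) and the second through \(\omega_{b,-}\). Conditions \ref{item:conjugacy}, \ref{item:non-triviality}, \ref{item:no-self-transmission} of Definition \ref{def:reversable} are immediate, since \(\min\) is symmetric, \(\kappa>0\), and self-transmission is excluded by construction. For \ref{item:reversible} it then suffices, under unbiasedness, to verify the reduced relation \ref{item:reversible-adapted}, namely \(\pi_a s_{\widetilde b}=\pi_b s_{\widetilde a}\) whenever \(a,b\) lie in a common class \(\Equiv\) with \(\widetilde a\in\Equiv'_a\) and \(\widetilde b\in\Equiv'_b\). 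Substituting \eqref{eq:mh-ratio} I would rewrite
\[
 \pi_a\,s_{\widetilde b}
 \;=\;
 \frac{1}{\kappa(\Equiv)}\,
 \min\{\kappa(\Equiv),\kappa(\Equiv'_a)\}\,
 \min\{\kappa(\Equiv),\kappa(\Equiv'_b)\}\,,
\]
which is manifestly symmetric under interchange of \(a\) and \(b\); this is precisely the Metropolis-Hastings cancellation. Invariance of \(\pi\) is then a formal consequence of detailed balance together with stochasticity and \(\pi_a=\pi_{\widetilde a}\) (summing \(\pi_a p_{a,\widetilde b}=\pi_b p_{b,\widetilde a}\) over \(a\) and using that the involution is a bijection).

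The zero-deficit claim is then immediate: comparing the defined \(\pi_a=\min\{\kappa(\Equiv_1),\kappa(\Equiv_2)\}\) with Definition \ref{def:deficit} gives \(\delta_a\equiv0\), and consistency with Lemma \ref{lem:reverse1} is checked by noting \(\pi_a/s_{\widetilde a}=\kappa(\Equiv_1)\). I expect the main obstacle to be block (ii), precisely because of the circularity: since \eqref{eq:concrete} presupposes the conclusion, the real content is to establish independently both that the rows sum to one (the role of \ref{condition:extremely-divergent}) and that the derived suprema \eqref{eq:limits} return the value \(\half\) built into the \(\omega\)'s (the role of \ref{condition:locally-finite}), thereby reconciling the prescribed \(\EquivClass\) with the intrinsic scattering classes. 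Once this self-consistency is secured, the remaining reversibility and deficit assertions reduce to the routine \(\min\)-identity above.
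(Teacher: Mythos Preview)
Your proposal is correct and covers all the required claims. The chief difference from the paper is in block (ii), the sufficiency direction. The paper establishes existence of the balanced delineated scattering process \emph{constructively}, by exhibiting an explicit recursive simulation: at each step a fair coin determines the direction along the current class and a single Uniform\((0,1)\) draw is inverted against the partial products \(\prod_{c:\,a\prec c\preceq b}(1-s_{\widetilde c})\) to select the next scattering state; conditions \ref{condition:locally-finite} and \ref{condition:extremely-divergent} are exactly what make this inversion well-defined and non-defective. Your route is instead \emph{verificational}: you take the formulae of \eqref{eq:concrete} at face value and check directly that they satisfy Definition \ref{def:delineated}, that rows sum to one via the telescoping identities, and that the prescribed \(\EquivClass\) coincides with the intrinsic scattering-class decomposition. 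Both are sound; the paper's construction has the side benefit of yielding the simulation algorithm recorded as Corollary \ref{cor:mh-ratio}, while your approach makes more transparent precisely which axiom each of the two conditions underwrites. On blocks (i), (iii) and (iv) you and the paper agree: necessity via Corollaries \ref{cor:local-summability} and \ref{cor:final-divergence}, reversibility by checking the reduced condition \ref{item:reversible-adapted} through the symmetric \(\min\)-identity you display, and the deficit claim by inspection.
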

\begin{proof}
Note that forcing all deficits to be set to zero also forces 
\(\pi_a =\min\left\{\kappa(\Equiv_1), \kappa(\Equiv_2)\right\}\)
for \(a\in\Equiv_1\) and \(\widetilde{a}\in\Equiv_2\):
\eqref{eq:mh-ratio} then follows from Lemma \ref{lem:reverse1}.

 The
 necessity of 
 \ref{condition:locally-finite},
 respectively \ref{condition:extremely-divergent},
 follows from Corollary \ref{cor:local-summability},
 respectively
 Corollary \ref{cor:final-divergence}.

Existence of the required balanced delineated scattering process \(Z\) 
can be demonstrated by noting that it is well-defined
using the following
recursive procedure involving a
sequence of fair coin tosses and 
draws from a Uniform\((0,1)\) distribution:
\begin{enumerate}
 \item Suppose \(Z_n=a\in\Equiv\). 
 Choose between the two branches of \(\Equiv\) 
 with probability \(\omega_{a,+}=\half\) for \(\{b\in\Equiv\;:\;a\prec b\}\) and \(\omega_{a,-}=\half\) for \(\{b\in\Equiv\;:\;b\prec a\}\);
 \item Draw \(U_{n+1}\) from a Uniform\((0,1)\) distribution. 
 If \(\{b\in\Equiv\;:\;a\prec b\}\) is chosen, 
 then set \(Z_{n+1}\) to be the smallest \(\widetilde b\) (with \(a\prec b\), \(b\in\Equiv\)) such that
 \[
  \prod_{c\in\Equiv\;:\;a\prec c \preceq b}(1-s_{\widetilde c}) \quad\leq\quad U_{n+1} 
  \quad<\quad \prod_{c\in\Equiv\;:\;a\prec c \prec b}(1-s_{\widetilde c})
 \]
(this uses the fact that \(\prod_{c\in\Equiv\;:\;a\prec c}(1-s_{\widetilde c})=0\), which itself arises from \eqref{eq:closed-end}).
Similarly,
if \(\{b\in\Equiv\;:\;b\prec a\}\) is chosen, 
then set \(Z_{n+1}\) to be the largest \(\widetilde b\) (with \(b\prec a\), \(b\in\Equiv\)) such that
 \[
  \prod_{c\in\Equiv\;:\;b\preceq c \preceq a}(1-s_{\widetilde c}) \quad\leq\quad U_{n+1} 
  \quad<\quad \prod_{c\in\Equiv\;:\;b\prec c \prec a}(1-s_{\widetilde c})\,.
 \]
 \item Increment \(n\) by \(1\) and go to step 1.
\end{enumerate}
 Finally, unbiased dynamical reversibility follows immediately from computations
 verifying the conditions in  Definition \ref{def:reversable}
 and particularly the condition 
 \ref{item:reversible-adapted},
 which can be substituted for 
 condition \ref{item:reversible}
 in the case of unbiased dynamical reversibility.
\end{proof}

Part of the proof of Theorem \ref{thm:mh-ratio}
describes a simulation procedure which it is convenient 
to reference explicitly.
\begin{cor}\label{cor:mh-ratio}
 In the situation of Theorem \ref{thm:mh-ratio},
 the (balanced)
\((a\mapsto\widetilde{a},\EquivClass)\)-delineated scattering
process \(Z\) can be simulated using
\begin{enumerate}
 \item a choice \(Z_0\) of initial position;
 \item a sequence of independent fair coin tosses to determine 
 direction of travel along each successive line;
 \item and independently a sequence of independent draws from
 the Uniform \((0,1)\) distribution, to determine the distance of travel along each successive line, based only on the relevant scattering probabilities.
\end{enumerate}
The \(n^\text{th}\) corresponding
pair of coin toss followed by uniform draw can be thought of as
the innovation for time \(n\) generating the evolution of \(Z\).
\end{cor}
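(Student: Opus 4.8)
The plan is to observe that the required simulation recipe is precisely the recursive construction already exhibited in the proof of Theorem \ref{thm:mh-ratio}, so that the work consists mainly in confirming that this construction consumes exactly the three ingredients listed and that its per-step innovations are mutually independent. First I would recall that, given the current state \(Z_n=a\) lying in a scattering class \(\Equiv\), the construction tosses a fair coin to select one of the two branches \(\{b\in\Equiv\;:\;a\prec b\}\) or \(\{b\in\Equiv\;:\;b\prec a\}\) (thereby realizing \(\omega_{a,\pm}=\half\)), and then draws a single \(U_{n+1}\) from the Uniform\((0,1)\) distribution which the quantile prescription of that proof converts into the landing state \(Z_{n+1}=\widetilde b\). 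Since these two random inputs are the only ones consumed at step \(n\), and the scattering probabilities \(s_{\widetilde c}\) used depend only on \(a\) through its scattering class, the whole evolution is driven by \(Z_0\) together with an independent sequence of fair coin tosses and an independent sequence of uniform draws, exactly as claimed.

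The one substantive point I would verify is that this mechanism reproduces the transition law \(p_{a,\widetilde b}=\omega_{a,\widetilde b}\,s_{\widetilde b}\) of \eqref{eq:concrete}. For \(a\prec b\) in \(\Equiv\) I would compute the probability that the \(+\)-branch is chosen and that \(U_{n+1}\) lands in the half-open interval assigned to \(\widetilde b\); by telescoping, the length of that interval is
\[
 \prod_{c\in\Equiv\;:\; a\prec c\prec b}(1-s_{\widetilde c})
 \;-\;
 \prod_{c\in\Equiv\;:\; a\prec c\preceq b}(1-s_{\widetilde c})
 \;=\;
 \left(\prod_{c\in\Equiv\;:\; a\prec c\prec b}(1-s_{\widetilde c})\right) s_{\widetilde b}\,,
\]
so multiplication by the coin probability \(\omega_{a,+}=\half\) yields exactly the right-hand side of \eqref{eq:concrete}; the case \(b\prec a\) is handled symmetrically. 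That these half-open intervals genuinely tile \([0,1)\), so that \(Z_{n+1}\) is almost surely well-defined, is precisely condition \ref{condition:extremely-divergent}, namely \(\prod_{c\in\Equiv\;:\; a\prec c}(1-s_{\widetilde c})=0\), already established in Theorem \ref{thm:mh-ratio} and recorded in \eqref{eq:closed-end}.

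Finally I would assemble these observations: because at each step the pair (coin toss, uniform draw) is drawn afresh, independently of the past and of every other such pair, the sequence of these pairs is an i.i.d.\ innovation sequence, and the prescription realizes a measurable deterministic map \((Z_n,\text{innovation}_{n+1})\mapsto Z_{n+1}\). This exhibits \(Z\) as a stochastic recursion driven by i.i.d.\ innovations, confirming both the Markov property and the interpretation of the \(n^\text{th}\) innovation as the coin-toss--uniform-draw pair. I do not expect a genuine obstacle here, since the corollary merely packages the construction from the proof of Theorem \ref{thm:mh-ratio}; the only step needing care is the tiling argument of the second paragraph, and even that is already supplied by condition \ref{condition:extremely-divergent}.
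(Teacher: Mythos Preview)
Your proposal is correct and takes essentially the same approach as the paper: the corollary is stated there without proof, merely as an explicit restatement of the recursive simulation procedure already given in the proof of Theorem~\ref{thm:mh-ratio}. Your verification that the quantile intervals have the right lengths and tile \([0,1)\) is more detailed than anything the paper provides, but entirely in the same spirit.
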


\begin{remark}
 Condition \ref{condition:locally-finite} implies 
 that the pattern of intersections of a scattering class \(\Equiv\) 
 with other scattering classes of higher \(\kappa\) 
 levels
 must be locally finite
 in \(\Equiv\).
 Condition \ref{condition:extremely-divergent}
 holds if the pattern of intersections of a scattering class \(\Equiv\) 
 with other scattering classes of higher \(\kappa\) must be infinite in number to the right or left of any fixed state,
 which will certainly be the case for \SIRSN; but note that
 condition \ref{condition:extremely-divergent} could
 still be satisfied even if this is not the case.
\end{remark}

\begin{remark}
 More general scattering probabilities can be considered
 which introduce non-zero deficits:
\begin{equation}\label{eq:mh-ratio-modified}
 s_{\widetilde a} \quad=\quad 
 \sigma(\tfrac{\kappa(\Equiv_2)}{\kappa(\Equiv_1)})
   \min\left\{1, \frac{\kappa(\Equiv_2)}{\kappa(\Equiv_1)}\right\} 
 \qquad \text{ when } a\in\Equiv_1 \text{ and }\widetilde{a}\in\Equiv_2\,,
\end{equation}
where the positive function \(\sigma\) is required to satisfy 
the inversion symmetry \(\sigma(u)=\sigma(1/u)\) (in order to ensure that
dynamical reversibility holds)
and 
it is further required that
\(s_{\widetilde a}\) and \(s_a\) 
satisfy the contraint of lying in \((0, 1)\).
In the following, we will consider only the
(zero-deficit) case \(\sigma(u)\equiv1\) introduced above, corresponding to 
an acceptance mechanism
of Metropolis-Hastings type. In the next section 
we will see that this is the only similarity-invariant possibility
once we require the scattering process 
to have identically zero deficits.

%
%
 \end{remark}

 \section[Scale-invariant RRF on SIRSN (SIRSN-RRF)]{Scale-invariant \RRF on \SIRSN (\SIRSNRRF)}\label{sec:SIRSN-RRF}
 
{\RRF}s
based on the Poisson line \SIRSN 
or \SIRSN candidate \(\Pi\)
(after sampled at times of switching lines, and quenching by conditioning
on the random environment \(\Pi\))
can be considered 
as \((a\mapsto\widetilde{a},\EquivClass)\)-delineated scattering processes,
where
\begin{enumerate}[(a)]
\item
the state-space
is 
the set
\(\StateSpace\)
of ordered pairs of distinct speed-marked lines from \(\Pi\),
as given by \eqref{eq:state-space};
 \item 
the involution is given by 
\(a=(\Line_-,\Line_0)\mapsto\widetilde{a}=(\Line_0,\Line_-)\),
 \item 
and scattering classes \(\Equiv\in\EquivClass\) 
turn out to be of the form
\(\Equiv=\{(\Line,\Line_1)\;:\; \Line_1\in\Pi\setminus\{\Line\}\}\)
for a corresponding line \(\Line\in\Pi\), furnished with the
total ordering taken from \(\Line\)
(using our specification of preferred direction
for each \(\Line\in\Pi\)).
\end{enumerate}
We say that the lines of \(\Pi\)
are scattering classes for the process.

So we will 
consider {\RRF}s which under quenching are balanced
delineated reversible scattering processes
based on \(\Pi\) as above,
with scattering classes corresponding 
to the lines of \(\Pi\).
The stochastic dynamics can then be specified by defining the invariant
measure \(\pi_a\) and the scattering probability \(s_a\) for all states \(a\). 
We require these to deliver processes on \(\Pi\) which behave well under 
Euclidean symmetry and changes of scale,
leading in due course to the natural family of \SIRSNRRF
of Theorem \ref{thm:similarity-invariance} 
and Definition \ref{def:SIRSN-RRF}. 
Viewing the invariant measure and scattering probabilities as depending not just on location 
but also on all of \(\Pi\), we are led to:
\begin{defn}\label{def:similarity}
 Consider a balanced
delineated reversible scattering process
based on the Poisson line \SIRSN 
or \SIRSN candidate \(\Pi\)
(quenching by conditioning on \(\Pi\)),
with the lines of \(\Pi\) as scattering classes.
This is said to satisfy
\emph{similarity equivariance} if 
\begin{enumerate}
 \item the scattering probability, when viewed as a function
 \(s(\Line_1,\Line_2;\Pi\setminus\{\Line_1,\Line_2\})\)
 of location \((\Line_1,\Line_2)\) and reduced environment
 \(\Pi\setminus\{\Line_1,\Line_2\}\),
 is invariant under the group of similarities (generated by Euclidean
 motions and changes of scale);
 \item the invariant measure, viewed as a function
 \(\pi(\Line_1,\Line_2;\Pi\setminus\{\Line_1,\Line_2\})\)
 of location \((\Line_1,\Line_2)\) and
 \emph{reduced environment} \(\Pi\setminus\{\Line_1,\Line_2\}\),
 is invariant under the Euclidean motion group; moreover the ratio
 \[
  \frac{\pi(\Line_1,\Line_2;\Pi\setminus\{\Line_1,\Line_2\})}%
  {\pi(\Line_3,\Line_4;\Pi\setminus\{\Line_3,\Line_4\})}
 \]
 is invariant under scale-change.
\end{enumerate}
\end{defn}

\begin{remark}
 A simple geometric argument
 shows
 it suffices to check scale-invariance only for the ratios
 \[
   \frac{\pi(\Line_1,\Line_2;\Pi\setminus\{\Line_1,\Line_2\})}%
  {\pi(\Line_1,\Line_3;\Pi\setminus\{\Line_1,\Line_3\})}\,,
 \]
 since \(\pi(\Line_1,\Line_2;\Pi\setminus\{\Line_1,\Line_2\})\)
 is symmetric in \(\Line_1\) and \(\Line_2\)
 as a consequence of dynamical reversibility, 
 and almost surely all lines in \(\Pi\) intersect.
\end{remark}

Also note that it follows from Lemma \ref{lem:reverse1}
that the similarity-equivariance
and Euclidean invariance
properties of Definition \ref{def:SIRSN-RRF}
imply Euclidean-invariance of
\[
 \kappa(\Line_1)\;=\;\kappa(\Line_1;\Pi\setminus\{\Line_1\})
\quad=\quad
  \frac{\pi(\Line_1,\Line_2;\Pi\setminus\{\Line_1,\Line_2\})}%
  {s(\Line_1,\Line_2;\Pi\setminus\{\Line_1,\Line_2\})}\,,
\]
and similarity-invariance for
\begin{equation*}
 \frac{\kappa(\Line_1;\Pi\setminus\{\Line_1\})}
 {\kappa(\Line_2;\Pi\setminus\{\Line_2\})}
\quad=\quad
  \frac{s(\Line_2,\Line_1;\Pi\setminus\{\Line_1,\Line_2\})}%
  {s(\Line_1,\Line_2;\Pi\setminus\{\Line_1,\Line_2\})}\,.
\end{equation*}
Finally Euclidean-invariance for the deficit
(viewed again as a function
 \(\delta(\Line_1,\Line_2;\Pi\setminus\{\Line_1,\Line_2\})\)
 of location \((\Line_1,\Line_2)\) and \(\Pi\))
follows from
\[
 1 - \delta(\Line_1,\Line_2;\Pi\setminus\{\Line_1,\Line_2\})
 \quad=\quad
 \frac{\pi(\Line_1,\Line_2;\Pi\setminus\{\Line_1,\Line_2\})}
 {\min\{\kappa(\Line_1;\Pi\setminus\{\Line_1\}),
 \kappa(\Line_2;\Pi\setminus\{\Line_2\})\}}\,,
\]
as does similarity-invariance for
\[
\frac{1 - \delta(\Line_1,\Line_2;\Pi\setminus\{\Line_1,\Line_2\})}
{1 - \delta(\Line_3,\Line_4;\Pi\setminus\{\Line_3,\Line_4\})}
\quad=\quad
   \frac{\pi(\Line_1,\Line_2;\Pi\setminus\{\Line_1,\Line_2\})}%
  {\pi(\Line_1,\Line_3;\Pi\setminus\{\Line_1,\Line_3\})}
\,.
\]
Consideration of these remarks also proves the 
following converse.
\begin{lem}\label{lem:spec}
  Consider a balanced
delineated reversible scattering process based
on the Poisson line \SIRSN 
or \SIRSN candidate \(\Pi\),
with the lines of \(\Pi\) as scattering classes.
It is said to
satisfy
\emph{similarity-equivariance} if 
\(\kappa(\Line_1;\Pi\setminus\{\Line_1\})\)
and 
\(\delta(\Line_1,\Line_2;\Pi\setminus\{\Line_1,\Line_2\})\)
determine Euclidean-invariant functions,
and the ratios
\[
\frac{\kappa(\Line_1;\Pi\setminus\{\Line_1\})}
 {\kappa(\Line_2;\Pi\setminus\{\Line_2\})}
\qquad
\text{ and}
\qquad
 \frac{1 - \delta(\Line_1,\Line_2;\Pi\setminus\{\Line_1,\Line_2\})}
{1 - \delta(\Line_3,\Line_4;\Pi\setminus\{\Line_3,\Line_4\})}
\]
are scale-invariant.
In this case the stochastic dynamics of the scattering process
are determined by specifying the functions 
\(\kappa(\Line_1;\Pi\setminus\{\Line_1\})\)
and 
\(\delta(\Line_1,\Line_2;\Pi\setminus\{\Line_1,\Line_2\})\).
\end{lem}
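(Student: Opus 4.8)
The plan is to invert the explicit dictionary between the pair $(\pi,s)$ and the pair $(\kappa,\delta)$ already recorded in the remarks preceding the statement. Writing $\Psi=\Pi\setminus\{\Line_1,\Line_2\}$, Lemma \ref{lem:reverse1} and Definition \ref{def:deficit} give
\[
 \pi(\Line_1,\Line_2;\Psi)=(1-\delta(\Line_1,\Line_2;\Psi))\min\{\kappa(\Line_1),\kappa(\Line_2)\}\,,
\]
and, since $\kappa(\Line_1)=\pi(\Line_1,\Line_2;\Psi)/s(\Line_1,\Line_2;\Psi)$,
\[
 s(\Line_1,\Line_2;\Psi)=(1-\delta(\Line_1,\Line_2;\Psi))\min\{1,\kappa(\Line_2)/\kappa(\Line_1)\}\,.
\]
Because $s$ depends only on the ratio $\kappa(\Line_2)/\kappa(\Line_1)$, these formulas recover $s$ outright and $\pi$ up to its (irrelevant) common normalization from $\kappa$ and $\delta$; this is the algebraic substance behind the determination claim, to be completed once the invariances have been read off.

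First I would extract the Euclidean part. Euclidean-invariance of $\kappa$ and $\delta$ feeds directly through the two displays to give Euclidean-invariance of both $\pi$ and $s$, which is one half of Definition \ref{def:similarity}. For the scaling part, scale-invariance of the ratio $\kappa(\Line_2)/\kappa(\Line_1)$ says that under a scaling all $\kappa$-values are multiplied by a single positive factor, so that $\min\{1,\kappa(\Line_2)/\kappa(\Line_1)\}$ and every ratio $\min\{\kappa(\Line_1),\kappa(\Line_2)\}/\min\{\kappa(\Line_3),\kappa(\Line_4)\}$ is scale-invariant. Combined with scale-invariance of $1-\delta$, the first display then yields scale-invariance of $\pi(\Line_1,\Line_2;\cdot)/\pi(\Line_3,\Line_4;\cdot)$ and the second yields scale-invariance of $s$; together with the Euclidean step this is exactly similarity-equivariance in the sense of Definition \ref{def:similarity}.

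To finish the determination claim I would invoke the balanced delineated reversible structure that is assumed throughout: balance gives $\omega_{a,\pm}=\half$, and Theorem \ref{thm:delineated} then expresses every transition probability $p_{a,\widetilde b}=\omega_{a,\widetilde b}s_{\widetilde b}$ purely in terms of the scattering probabilities and the fixed orderings induced by the preferred directions on the lines of $\Pi$. Since $s$ has been recovered from $\kappa$ and $\delta$, the entire quenched transition kernel, and hence the law of the scattering process, is fixed; an explicit simulation is moreover supplied by Corollary \ref{cor:mh-ratio}.

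The step I expect to require the most care is the scaling argument for $s$. As phrased, the hypothesis controls only the ratio of $(1-\delta)$-values under scaling, whereas the second display shows that scale-invariance of $s$ is equivalent to scale-invariance of $1-\delta$ itself (its companion factor $\min\{1,\kappa(\Line_2)/\kappa(\Line_1)\}$ being already scale-invariant). The ratio condition fixes $1-\delta$ only up to a scaling-dependent multiplicative factor, so the real work is to pin this factor to unity. This is immediate in the zero-deficit Metropolis-Hastings case $\delta\equiv0$ that drives the sequel, and in general it should follow from the scale-invariance of the law of $\Pi$ together with the constraint $1-\delta\in(0,1]$, which prevents any nontrivial multiplicative scaling cocycle.
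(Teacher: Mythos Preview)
Your approach is the paper's approach: the paper's entire proof is the single sentence ``Consideration of these remarks also proves the following converse'', pointing back to exactly the dictionary \(\pi=(1-\delta)\min\{\kappa,\kappa\}\), \(s=\pi/\kappa\) that you have written out and inverted. You have simply supplied the details the paper leaves implicit, including the appeal to Theorem~\ref{thm:delineated} (balance plus scattering probabilities determines the kernel), which is the right way to close the determination claim.

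The one point where you hesitate is real, and your proposed fix is correct, though the ingredient you name is not the operative one. The ratio hypothesis on \(1-\delta\) forces \(1-\delta\) to transform under scaling by a multiplicative factor \(c(\lambda)\) depending only on \(\lambda\); composition of scalings gives \(c(\lambda\mu)=c(\lambda)c(\mu)\). Positivity of \(\pi\) (condition~(c) of Definition~\ref{def:reversable}) gives \(1-\delta>0\), while by definition \(1-\delta\le1\). If \(c(\lambda_0)>1\) for some \(\lambda_0\), then \(c(\lambda_0^n)=c(\lambda_0)^n\to\infty\) would push \(1-\delta\) above \(1\) on the scaled configuration, a contradiction; the case \(c(\lambda_0)<1\) is handled by passing to \(\lambda_0^{-1}\). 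Hence \(c\equiv1\) and \(1-\delta\) is itself scale-invariant, so \(s\) is similarity-invariant as required. The scale-invariance of the law of \(\Pi\) is not needed for this step; the constraint \(1-\delta\in(0,1]\) alone kills the cocycle.
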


Remarkably, the choice of \(\kappa(\Line_1;\Pi\setminus\{\Line_1\})\)
is heavily constrained by similarity-equivariance. This follows from
an ergodic theorem for Poisson line \SIRSN  or \SIRSN candidates \(\Pi\). This in turn requires
the Slivnyak-Mecke theorem
for Palm conditioning of Poisson processes.
\begin{thm}[Slivnyak-Mecke]\label{thm:Slivnyak-Mecke}
 Suppose \(\Pi\) is a Poisson point process on Euclidean
 space \(\Reals^d\)
 with diffuse intensity measure \(\nu\). 
 For any measurable non-negative function \(f\) 
 on \(\Reals^d\),
 \[
 \Expect{\sum_{x\in\Pi}f(x,\Pi\setminus\{x\})}
 \quad=\quad
 \int \Expect{f(x,\Pi)} \nu({{\d}{x}})\,. 
\]
\end{thm}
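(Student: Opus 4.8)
The plan is to establish the Mecke identity first for intensity measures of finite total mass, where the Poisson process admits the transparent description as a Poisson number of conditionally i.i.d.\ points, and then to bootstrap to general ($\sigma$-finite) diffuse $\nu$ by a monotone-class argument. Throughout, diffuseness of $\nu$ is what guarantees that the points of $\Pi$ are almost surely simple, so that both the sum $\sum_{x\in\Pi}$ and the subtracted configuration $\Pi\setminus\{x\}$ are unambiguous.

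First I would suppose $\lambda := \nu(\Reals^d) < \infty$. Then $\Pi$ consists of $N\sim\mathrm{Poisson}(\lambda)$ points which, conditionally on $N=n$, are i.i.d.\ with law $\nu/\lambda$. The left-hand side is expanded by conditioning on $N$, and exchangeability of the i.i.d.\ points lets one replace $\sum_{i=1}^n f(X_i,\{X_j:j\neq i\})$ by $n\,f(X_1,\{X_2,\ldots,X_n\})$ under expectation. The decisive step is that $X_1$ is independent of the remaining configuration $\{X_2,\ldots,X_n\}$, so that integrating $X_1$ out against $\nu/\lambda$ and re-indexing $m=n-1$ acts on the weight $\tfrac{e^{-\lambda}\lambda^n}{n!}\cdot n$, which equals $\tfrac{e^{-\lambda}\lambda^{m+1}}{m!}$, turning it into $\tfrac{e^{-\lambda}\lambda^{m}}{m!}$ once the spare factor $\lambda$ cancels the $1/\lambda$ from the law of $X_1$. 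The resulting sum over $m$ is precisely the unconditional Poisson expectation, now evaluated at an external point $x$, yielding $\int\Expect{f(x,\Pi)}\,\nu({\d}x)$ as required.

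To pass to $\sigma$-finite diffuse $\nu$ I would regard each side as a measure on $\Reals^d\times\mathbf{N}$, where $\mathbf{N}$ is the space of locally finite configurations: on the left, $M_1=\Expect{\sum_{x\in\Pi}\delta_{(x,\,\Pi\setminus\{x\})}}$, and on the right, $M_2=\nu\otimes\Law{\Pi}$. It suffices to check $M_1=M_2$ on a generating $\pi$-system of product sets $A\times\mathcal{C}$, with $A$ bounded Borel and $\mathcal{C}$ a cylinder event depending only on the configuration inside some bounded window $B\supseteq A$. Restricting $\Pi$ to $B$ and exploiting the independence of $\Pi|_B$ and $\Pi|_{B^{\mathrm{c}}}$ for a Poisson process reduces each such evaluation to the finite-intensity computation already performed; a $\pi$--$\lambda$ argument then promotes this to equality of the two measures, and monotone convergence finally upgrades from indicators to arbitrary non-negative measurable $f$.

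I expect the $\sigma$-finite extension, rather than the finite computation, to be the main obstacle: since $f$ is a functional of the whole infinite configuration $\Pi\setminus\{x\}$ one cannot simply localise $f$ to a bounded window, and the role of the cylinder test sets (which do localise) together with independence of Poisson restrictions is precisely what makes the reduction legitimate. A streamlined alternative that avoids the cylinder bookkeeping is to prove the identity directly for multiplicative functionals $f(x,\mu)=g(x)\exp(-\sum_{y\in\mu}u(y))$, using the relation $\exp(-\sum_{y\in\Pi\setminus\{x\}}u(y))=e^{u(x)}\exp(-\sum_{y\in\Pi}u(y))$ for $x\in\Pi$ and differentiating the Laplace functional $\Expect{\exp(-\sum_{y\in\Pi}u(y))}=\exp(-\int(1-e^{-u})\,{\d}\nu)$ in a linear perturbation of $u$; this treats general $\nu$ uniformly, but there the burden shifts onto justifying differentiation under the expectation and onto verifying, by a functional monotone-class theorem, that such functionals generate enough of the non-negative measurable functionals.
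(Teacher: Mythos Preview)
Your proof proposal is correct and follows a standard route to the Slivnyak--Mecke identity: first the explicit computation for finite total intensity via the mixed-binomial representation, then extension to $\sigma$-finite diffuse $\nu$ by a $\pi$--$\lambda$ argument on cylinder sets, exploiting the independence of Poisson restrictions to disjoint regions. The algebra in the finite case is right, and your handling of the localisation subtlety (that $f$ depends on the full configuration, so one must test on cylinder events rather than localise $f$ itself) is well judged.

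There is, however, nothing to compare against: the paper does not supply its own proof of this theorem but simply cites \cite[Section~4.4.4, Example~4.3]{ChiuStoyanKendallMecke-2013}. Your write-up is therefore more than the paper itself provides, and either of your two suggested routes (the $\pi$-system argument or the Laplace-functional differentiation) would serve as a self-contained substitute for that citation.
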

 The proof of this result
 is described in 
Example 4.3 of 
\citet[Section 4.4.4]{ChiuStoyanKendallMecke-2013}.

We now state and prove the required ergodic theorem.
\begin{thm}\label{thm:ergodic} 
 Let \(\Pi\) be a Poisson line \SIRSN or \SIRSN candidate,
 and let \(\xi(\Line,\Pi)\) be a
 non-negative measurable function of \(\Line\) and
 \(\Pi\) 
 which is invariant
 under Euclidean motion
 applied to the pair \((\Line,\Pi)\)
 of speed-marked line \(\Line\)
 and speed-marked line pattern \(\Pi\). 
 Consider \(\xi(\Line,\Pi\setminus\{\Line\})\) as \(\Line\) varies over the speed-marked line process \(\Pi\):
 this is almost surely a deterministic function of the speed \(v(\Line)\) alone.
\end{thm}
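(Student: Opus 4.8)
The plan is to combine the Palm description of the Poisson line process supplied by the Slivnyak-Mecke Theorem \ref{thm:Slivnyak-Mecke} with an ergodicity argument for the one-parameter group of translations \emph{along} a fixed line. Write \(\Line_v\) for the line along the \(x\)-axis carrying speed-mark \(v\). Because \(\xi\) is invariant under Euclidean motion of the pair \((\Line,\Pi)\), and the intensity \eqref{eq:plp-sirsn-intensity} is invariant under Euclidean isometries, the law of \(\xi(\Line_0,\Pi)\) for a line \(\Line_0\) of speed \(v\) at an arbitrary position depends only on \(v\): any isometry \(T\) carrying \(\Line_v\) to \(\Line_0\) gives \(\xi(\Line_0,\Pi)=\xi(\Line_v,T^{-1}\Pi)\) with \(T^{-1}\Pi\stackrel{d}{=}\Pi\). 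By the Slivnyak-Mecke Theorem \ref{thm:Slivnyak-Mecke}, the reduced environment \(\Pi\setminus\{\Line\}\) seen from a line \(\Line\) of \(\Pi\) is, under the Palm measure at \(\Line\), distributed exactly as an independent copy of \(\Pi\). Consequently it suffices to exhibit a deterministic measurable \(g\) with \(\xi(\Line_v,\Pi)=g(v)\) almost surely for \(\nu\)-almost every \(v\); applying Slivnyak-Mecke once more to the (suitably localized, hence finite) expected number of lines of \(\Pi\) at which \(\xi(\Line,\Pi\setminus\{\Line\})\neq g(v(\Line))\) then forces this bad set to be empty almost surely, which is the claim.

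The heart of the matter is therefore to show that, for each fixed \(v\), the random variable \(G(v,\Pi):=\xi(\Line_v,\Pi)\) is almost surely constant. For this I would exploit the subgroup of Euclidean motions fixing \(\Line_v\) setwise, which contains the translations \(\tau_t\colon(x,y)\mapsto(x+t,y)\) along the \(x\)-axis. Since \(\tau_t\Line_v=\Line_v\), invariance of \(\xi\) gives \(G(v,\tau_t\Pi)=\xi(\tau_t\Line_v,\tau_t\Pi)=\xi(\Line_v,\Pi)=G(v,\Pi)\), so \(G(v,\cdot)\) is invariant under the measure-preserving flow \(\tau_t\) acting on the stationary speed-marked line process. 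It then remains to prove that this flow is ergodic, whereupon a translation-invariant functional must be almost surely constant.

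The main obstacle is exactly this ergodicity of translation along a single direction, which is delicate because a nearly-horizontal line can meet two windows far apart along the \(x\)-axis, so that the restrictions of \(\Pi\) to the two windows never become strictly independent. Since \(\Pi\) is improper, I would first pass to the speed-truncated sub-processes \(\Pi_{\geq v_0}\), which (recall \(\gamma>1\), so \(\int_{v_0}^\infty v^{-\gamma}\,\d v<\infty\)) are proper Poisson line processes of finite intensity whose increasing union as \(v_0\downarrow0\) generates the full configuration. For fixed \(v_0\) I would establish mixing by the standard Poisson estimate: if \(A\) and \(B\) depend only on the lines of \(\Pi_{\geq v_0}\) meeting a fixed bounded disk \(D\), then \(A\) and \(\tau_tB\) are determined by the lines meeting \(D\) and \(D+(t,0)\) respectively, and the lines meeting both disks are those at angle \(O(1/t)\) to the \(x\)-axis, a set of \(\nu\)-measure \(O(1/t)\to0\). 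Because the number of points of \(\Pi_{\geq v_0}\) in this overlap is Poisson with mean tending to \(0\), the two windows become asymptotically independent, giving \(\mathbb{P}(A\cap\tau_tB)\to\mathbb{P}(A)\mathbb{P}(B)\); approximating a general \(\tau\)-invariant event by such cylinder events (first over disks, then over \(v_0\downarrow0\)) shows that every \(\tau\)-invariant event has probability \(0\) or \(1\). (The exactly-horizontal lines left fixed by \(\tau_t\) form a \(\nu\)-null set, hence are almost surely absent and harmless.)

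With ergodicity in hand, \(G(v,\Pi)=g(v)\) almost surely for a deterministic value \(g(v)\); measurability of \(g\) as a function of \(v\) follows from a routine Fubini argument applied to the jointly measurable map \((v,\Pi)\mapsto\xi(\Line_v,\Pi)\). Transferring back through the Slivnyak-Mecke Theorem \ref{thm:Slivnyak-Mecke} as described then forces \(\xi(\Line,\Pi\setminus\{\Line\})=g(v(\Line))\) simultaneously for all \(\Line\in\Pi\), almost surely, which is the assertion.
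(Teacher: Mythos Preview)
Your argument is correct and follows the same overall architecture as the paper: reduce by Euclidean invariance to a fixed reference line, show that the translation flow along that line is ergodic on the Poisson line pattern so that the functional is a.s.\ constant, and then transfer back to the lines of \(\Pi\) via the Slivnyak--Mecke theorem.

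The one substantive difference is in how ergodicity along the line is obtained. You work with cylinder events based on disks in the plane and must then cope with the fact that near-horizontal lines can meet two far-apart disks; you resolve this by a mixing estimate showing the overlap has \(\nu\)-measure \(O(1/t)\). The paper avoids this complication entirely by switching to the \(v\)--\(s\)--\(\phi\) parametrization \eqref{eq:plp-sirsn-intensity-alt} relative to the fixed line \(\Line_v\): since \(s\) is the intersection point on \(\Line_v\), the flow \(\tau_t\) simply shifts \(s\), and the Poisson restrictions to disjoint \(s\)-intervals are \emph{exactly} independent. This lets the paper run a Hewitt--Savage-style argument directly (approximate \(\xi\) by a function of the lines meeting \(\Line_v\) in a bounded interval \(I\), shift to a disjoint interval, conclude \(\xi\) is independent of itself) without any asymptotic overlap estimate or truncation to \(\Pi_{\geq v_0}\). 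Your route is a bit longer but equally valid; the paper's parametrization trick is worth noting because it removes precisely the obstacle you identified as ``the main obstacle''.
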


\begin{proof}
It is enough to consider
non-negative bounded measurable \(\xi\), 
since the general result then follows
by consideration of \((n\wedge\xi)\) for increasing \(n\).
For a fixed speed-marked line \(\Line\),
consider the random process 
\(t\mapsto \xi(\Line;T_t\Pi)\),
where \(T_t\) is Euclidean translation parallel to \(\Line\) for \(t\in\Reals\).
The law of \(\Pi\) is translation invariant, so this bounded random process is stationary.

%
%

Now view \(\Pi\) in \(v-s-\phi\) coordinates corresponding to 
\eqref{eq:plp-sirsn-intensity-alt} based on the
fixed speed-marked
line \(\Line\).
In these coordinates the action of \(T_t\) sends
\((v,s,\phi)\) to \((v,s+t,\phi)\). 
Arguing as in the standard proof of the Hewitt-Savage zero-one law (for example \citealp[Theorem 3.15]{Kallenberg-2010}),
the bounded shift-invariant
function \(\xi(\Line;\Pi)\) can be approximated by a 
non-negative bounded measurable
function \(\widetilde{\xi}(\Line;\Pi)\)
depending only on lines of \(\Pi\) intersecting \(\Line\) in a bounded 
interval \(I\). 
Choosing \(t\) such that \(T_tI\) and \(I\) are disjoint,
\(\xi(\Line;\Pi)\) is similarly approximated by the
statistically independent \(\widetilde{\xi}(\Line;T_t\Pi)\). 
It follows that \(\xi(\Line;\Pi)\) is independent of itself
and thus is almost surely a deterministic function
\(c(\Line)\)
of \(\Line\) alone.
Euclidean-invariance of \(\xi\) implies that
\(\xi(\Line;\Pi)=c(\Line)=c(v(\Line))\) depends only on the speed \(v(\Line)\) of \(\Line\).

 Recall that the speed-marked Poisson line process \(\Pi\)
can be viewed as 
a Poisson point process in \(v-r-\theta\) space
(\((0,\infty)\times\operatorname{linespace})\))
 with intensity \(\nu({\d}{\Line})=\nu({\d}{v}{\d}{r}{\d}{\theta})\) given by \eqref{eq:plp-sirsn-intensity}.
So Theorem \ref{thm:Slivnyak-Mecke} applies to 
\(f(\Line,\Pi\setminus\{\Line\})=
\xi(\Line;\Pi\setminus\{\Line\})\Indicator{\Line\in A}\times\Indicator{\Pi\setminus\{\Line\}\in B}\),
for any
compact subset \(A\subset (0,\infty)\times\operatorname{linespace}\)
and any measurable subset \(B\)
of the space of speed-marked patterns inducing locally finite point patterns on \((0,\infty)\times\operatorname{linespace}\).
Thus
\begin{multline*}
  \Expect{\sum_{\Line\in\Pi\cap A}
  \xi(\Line;\Pi\setminus\{\Line\})\Indicator{\Pi\setminus\{\Line\}\in B}}
 \quad=\quad
 \\
 \int_A \Expect{
 \xi(\Line;\Pi)\;;\; \Pi\in B
 } \nu({\d}{\Line}) 
 \quad=\quad
 \Prob{\Pi\in B}\int_A c(v(\Line))
 \nu({\d}{\Line})
\end{multline*}
(finite because we required \(A\) to be a compact subset of
\((0,\infty)\times\operatorname{linespace}\)).

Viewing this as an equality between measures evaluated on 
product sets
\(A\times B\), a \(\Pi\)-system argument allows us to deduce that
 almost surely
\(\xi(\Line;\Pi\setminus\{\Line\})=c(v(\Line))\)
for all \(\Line\in\Pi\).
\end{proof}

As a direct consequence we have the following result,
which characterizes similarity-equivariant
scattering processes 
based on \(\Pi\) as a one-parameter
family
when the deficit 
\(\delta(\Line_1,\Line_2;\Pi\setminus\{\Line_1,\Line_2\})\)
is required to
vanish identically (equivalently if the invariant measure
\(\pi(\Line_1,\Line_2;\Pi\setminus\{\Line_1,\Line_2\})\)
is maximized subject to the specification of 
\(\kappa(\Line;\Pi\setminus\{\Line\})\)).
Note that 
the result requires one
to check non-triviality of the scattering process
(namely, that scattering can not occur instantaneously, and therefore 
that the scattering classes do indeed correspond to entire lines).
\begin{thm}\label{thm:similarity-invariance}
 Consider a balanced delineated reversible scattering process based on \(\Pi\)
 (quenched by conditioning on \(\Pi\)), 
 which has the lines of \(\Pi\) as scattering classes,
 so that its stochastic
 dynamics are specified by \(\kappa(\Line;\Pi\setminus\{\Line\})\)
and \(\delta(\Line_1,\Line_2;\Pi\setminus\{\Line_1,\Line_2\})\).
 Suppose further that the stochastic dynamics
 are similarity-equivariant.
 Then, for some real parameter \(\alpha\),
 \begin{equation}\label{eq:scaling}
  \frac{\kappa(\Line_1;\Pi\setminus\{\Line_1\})}%
  {\kappa(\Line_2;\Pi\setminus\{\Line_2\})}
  \quad=\quad
  \frac{v(\Line_1)^\alpha}{v(\Line_2)^\alpha}\,.
 \end{equation}
 Moreover, if the deficit 
 \(\delta(\Line_1,\Line_2;\Pi\setminus\{\Line_1,\Line_2\})\)
 vanishes identically 
 (so
 that
 \(\pi(\Line_1,\Line_2;\Pi\setminus{\Line_1,\Line_2})\)
 reduces to \(\min\{v(\Line_1)^\alpha,v(\Line_2)^\alpha\}\),
 and additionally \(s(\Line_1,\Line_2;\Pi\setminus{\Line_1,\Line_2})\)
 reduces to 
 \(\min\{1,(v(\Line_2)/v(\Line_1))^\alpha\}\)),
 then \(\alpha>\gamma-1\) is 
 necessary and sufficient for the scattering process 
 to be non-trivial.
\end{thm}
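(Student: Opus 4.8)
The plan is to establish the two assertions in turn; the power law \eqref{eq:scaling} follows from the invariance structure alone, while the threshold $\alpha>\gamma-1$ comes from a Palm computation of the scattering intensity along a single line. For \eqref{eq:scaling} I would first note, using Lemma \ref{lem:spec} and the remarks preceding it, that similarity-equivariance makes $\kappa(\Line;\Pi\setminus\{\Line\})$ a non-negative measurable function of $(\Line,\Pi)$ that is invariant under Euclidean motion, so that Theorem \ref{thm:ergodic} applies verbatim and forces $\kappa(\Line;\Pi\setminus\{\Line\})=c(v(\Line))$ almost surely for a fixed measurable function $c>0$ of the speed alone. Next I would bring in scale-invariance of the ratio: under the scaling \eqref{eq:SIRSN-scaling} a spatial dilation by $a$ preserves $\Law{\Pi}$ while multiplying every speed by $\lambda=a^{1/(\gamma-1)}$, and $\lambda$ ranges over all of $(0,\infty)$ as $a$ does. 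Scale-invariance of $\kappa(\Line_1;\cdot)/\kappa(\Line_2;\cdot)$ thus yields $c(\lambda v(\Line_1))/c(\lambda v(\Line_2))=c(v(\Line_1))/c(v(\Line_2))$ for all $\lambda>0$ and all lines of $\Pi$; since the realized speeds are almost surely dense in $(0,\infty)$, this makes $c(\lambda v)/c(v)$ a function of $\lambda$ alone, and writing $h=c/c(1)$ reduces the relation to the multiplicative Cauchy equation $h(\lambda v)=h(\lambda)h(v)$, whose measurable solutions are exactly $h(v)=v^\alpha$. Equation \eqref{eq:scaling} follows.

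For the threshold I would first translate the zero-deficit scattering probabilities into speeds. Combining the Metropolis--Hastings recipe \eqref{eq:mh-ratio} with \eqref{eq:scaling}, a particle moving along $\Line_0$ (speed $v_0$) and reaching an intersection with a line $\Line'$ (speed $v'$) scatters onto $\Line'$ with probability $\min\{1,(v'/v_0)^\alpha\}$, so switches to faster lines are certain while switches to slower lines are penalised. Non-triviality is precisely the assertion that the first scattering occurs at strictly positive distance, which is the local summability condition \ref{condition:locally-finite} guaranteeing that scattering classes are genuine whole lines. To test it I would invoke the Slivnyak--Mecke Theorem \ref{thm:Slivnyak-Mecke}: Palm-conditioning on $\Line_0\in\Pi$ leaves the other lines distributed as an independent copy of $\Pi$, whose intersections with $\Line_0$, in the $(v,s,\phi)$ coordinates of \eqref{eq:plp-sirsn-intensity-alt}, form a Poisson process of intensity $\tfrac{\gamma-1}{2}v^{-\gamma}\sin\phi\,\d v\,\d s\,\d\phi$ along $\Line_0$.

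The crux is then a single Campbell computation: over a segment of $\Line_0$ of length $L$ the expected total scattering probability is
\[
 L\int_0^\infty\!\!\int_0^\pi \min\{1,(v/v_0)^\alpha\}\,\tfrac{\gamma-1}{2}v^{-\gamma}\sin\phi\,\d\phi\,\d v
 \quad=\quad
 L(\gamma-1)\int_0^\infty \min\{1,(v/v_0)^\alpha\}\,v^{-\gamma}\,\d v\,.
\]
Splitting at $v=v_0$, the range $v>v_0$ contributes $\int_{v_0}^\infty v^{-\gamma}\,\d v<\infty$ (using $\gamma>1$), while the range $v<v_0$ contributes $v_0^{-\alpha}\int_0^{v_0}v^{\alpha-\gamma}\,\d v$, which is finite exactly when $\alpha-\gamma>-1$, i.e. $\alpha>\gamma-1$, and diverges otherwise --- the obstruction being the abundance of slow lines as $v\downarrow0$. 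Because the summands already lie in $[0,1]$, for a Poisson sum finite expectation is equivalent to almost-sure finiteness, so condition \ref{condition:locally-finite} holds almost surely precisely when $\alpha>\gamma-1$; for $\alpha\leq\gamma-1$ the scattering probabilities sum to infinity on every initial segment and scattering is instantaneous. Taking $L=\infty$ shows the same sum diverges on each half-line for every admissible $\alpha$, so condition \ref{condition:extremely-divergent} (eventual scattering, \emph{via} Corollary \ref{cor:final-divergence}) is automatic; hence for $\alpha>\gamma-1$ Theorem \ref{thm:mh-ratio} delivers a genuine non-trivial process, giving the ``necessary and sufficient'' claim.

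The main obstacle I anticipate lies not in the integral but in the careful passage between the three guises of non-triviality: finiteness of the Campbell expectation, the almost-sure local summability \ref{condition:locally-finite}, and the genuinely probabilistic statement that the first scattering is at strictly positive distance. The Palm bookkeeping must be handled with the same $\Pi$-system and product-measure care already used in the proof of Theorem \ref{thm:ergodic} --- confirming that the crossing intensity along the conditioned line $\Line_0$ is the unconditioned one and that the angular factor integrates cleanly to a constant --- and one must check that the divergent regime really forces instantaneous scattering rather than merely failing a sufficient condition.
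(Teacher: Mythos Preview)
Your proposal is correct and follows essentially the same route as the paper: Theorem \ref{thm:ergodic} plus the multiplicative Cauchy equation for \eqref{eq:scaling}, and a Slivnyak--Mecke/Campbell computation of the scattering intensity along a fixed line for the threshold. Two small remarks. First, your split of the integral at $v=v_0$ tacitly assumes $\alpha>0$ (so that $\min\{1,(v/v_0)^\alpha\}=(v/v_0)^\alpha$ for $v<v_0$); for $\alpha\leq0$ the minimum equals $1$ on $(0,v_0)$ and $\int_0^{v_0}v^{-\gamma}\d v=\infty$ gives divergence directly, so the conclusion stands but deserves a one-line case distinction --- the paper does this explicitly. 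Second, where you invoke the principle ``for a Poisson sum with $[0,1]$-valued summands, finite expectation is equivalent to almost-sure finiteness'' to settle both directions at once, the paper instead argues the divergent regime by hand: density of slow lines for $\alpha\leq0$, and for $0<\alpha\leq\gamma-1$ a monotonicity reduction to the boundary $\alpha=\gamma-1$ followed by an explicit decomposition of the sum into independent Poisson blocks and a martingale-plus-harmonic-series argument. Your appeal to the general $\int(1\wedge f)\,\d\mu$ criterion for Poisson integrals is cleaner and entirely legitimate here; just be sure to state it as a lemma or cite it, since it is doing real work in the ``necessary'' direction.
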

\begin{proof}
 Applying Theorem \ref{thm:ergodic} to 
 the function
 \(\kappa(\Line;\Pi\setminus\{\Line\})\),
 we deduce that we may write 
 \(\kappa(\Line;\Pi\setminus\{\Line\})=\psi(v(\Line))\).
 Now similarity-invariance for the ratio
 \[
  \frac{\kappa(\Line_1;\Pi\setminus\{\Line_1\})}
 {\kappa(\Line_2;\Pi\setminus\{\Line_2\})}
 \]
 implies, for all \(\lambda>0\),
 \[
  \frac{\psi(v)}{\psi(1)}\quad=\quad\frac{\psi(\lambda v)}{\psi(\lambda)}\,.
 \]
But then
a multiplicative form of Cauchy's functional equation
must hold,
\[
 \frac{\psi(\lambda v)}{\psi(1)}\quad=\quad
  \frac{\psi(v)}{\psi(1)}\times  \frac{\psi(\lambda)}{\psi(1)}\,,
\]
and thus (\(\psi\) being measurable) 
there must be a constant \(\alpha\) such that
\[
 \psi(v)\quad=\quad \psi(1) v^\alpha\,.
\]
Equation \eqref{eq:scaling} follows.

Suppose that the deficit vanishes identically, so in particular
\[
s(\Line_1,\Line_2;\Pi\setminus{\Line_1,\Line_2})
\quad=\quad 
\min\left\{1,\frac{v(\Line_2)^\alpha}{v(\Line_1)^\alpha}\right\}\,. 
\]
The resulting process is a non-trivial \RRF exactly when the conditions
of Corollaries \ref{cor:local-summability} and 
\ref{cor:final-divergence} are satisfied.
The condition for Corollary \ref{cor:final-divergence} follows immediately
from the observation that, for any given line \(\Line\), there is an everywhere dense set of intersections with lines of slower speed (in case \(\alpha<0\))
and an infinite unbounded set of intersections with lines of faster speed
(in case \(\alpha\geq0\). 

Consider the condition for 
Corollary \ref{cor:local-summability}, which is required
if 
the lines of \(\Pi\)
do indeed correspond to the scattering classes of the process.
We shall first show that \(\alpha>\gamma-1\) implies
finiteness of all sums of the form
\[
 \Expect{\sum_{\Line\in\Pi} \xi(\Line) \sum_{\dist(\Line',\origin)<A}
 s(\Line,\Line';\Pi\setminus\{\Line,\Line'\})
 }
\]
whenever \(\xi(\Line)\) is non-negative and measurable,
and \(\Expect{\sum_{\Line\in\Pi} \xi(\Line) v(\Line)^{-(\gamma-1)}}<\infty\).
Apply  
Theorem \ref{thm:Slivnyak-Mecke} (Slivnyak-Mecke) twice over,
and expand using the 
line-space coordinates leading to \eqref{eq:plp-sirsn-intensity}:
\begin{multline*}
 \Expect{\sum_{(\Line_-,\Line_0)\in\StateSpace} 
 \xi(\Line_-) \Indicator{|\sdist(\Line_0,\origin)|<R}
\min\left\{1,\frac{v(\Line_0)^\alpha}{v(\Line_-)^\alpha}\right\}
 }
 \\
 \quad=\quad
  \int\int_{|\sdist(\Line_0,\origin)|<R}
 \xi(\Line_-)
 \Expect{
\min\left\{1,\frac{v(\Line_0)^\alpha}{v(\Line_-)^\alpha}\right\} }
\nu({\d}{\Line_0}) \nu({\d}{\Line_-})
\\
=\,
\left(\tfrac{\gamma-1}{2}\right)^2  
\int_{-\infty}^\infty\int_0^\pi\int_{0}^\infty
\xi(v_-,\theta_-,r_-)
\int_{-R}^R\int_0^\pi\int_{0}^\infty
\left(1\wedge\tfrac{v_0^\alpha}{v_-^\alpha}\right)
v_0^{-\gamma}{\d}{v_0}{\d}{\theta_0}{\d}{r_0}\;
v_-^{-\gamma}{\d}{v_-}{\d}{\theta_-}{\d}{r_-}
\\
=\,
\left(\tfrac{\gamma-1}{2}\right)^2  
\int_{-\infty}^\infty\int_0^\pi\int_{0}^\infty
\xi(v,\theta_-,r_-)
\int_{-R}^R\int_0^\pi\int_{0}^\infty
\left(1\wedge u^\alpha\right)
v (uv)^{-\gamma}{\d}{u}{\d}{\theta_0}{\d}{r_0}\;
v^{-\gamma} {\d}{v}{\d}{\theta_-}{\d}{r_-}
 \\
 \hfill(\text{using }v_-=v, v_0=v u \text{ so that } {\d}{v_0}{\d}{v_-}=v{\d}{v}{\d}{u})
 \\
=\,
\left(\tfrac{\gamma-1}{2}\right)^2
\int_{-\infty}^\infty\int_0^\pi\int_{0}^\infty
\xi(v,\theta_-,r_-) v^{-(2\gamma-1)}
{\d}{v}{\d}{\theta_-}{\d}{r_-}
\times
2\pi R 
\int_{0}^\infty
\min\left\{1,u^\alpha\right\}
u^{-\gamma}{\d}{u} 
 \\
\quad=\quad
\left(\gamma-1\right) \pi R  
\times
\Expect{\sum_{\Line\in\Pi} \xi(\Line) v(\Line)^{-(\gamma-1)}}
\times
\left(
\int_{0}^1
u^{\alpha-\gamma}{\d}{u}  
+
\int_{1}^\infty
u^{-\gamma}{\d}{u}  
\right)
\end{multline*}
(where the last step uses \(\alpha>0\)).
This is finite only if \(\alpha>\gamma-1\).
Moreover in that case the theory of infinite products implies
that the next scattering cannot occur instantaneously,
and that there is a positive chance of the next scattering taking place
at an intersection with a line faster than the current line.
Since such intersections form a locally finite pattern along the 
current line, this implies that the scattering classes do indeed correspond to the lines, and therefore that the scattering process 
is non-trivial.

If \(\alpha\leq0\) then
\[
\sum_{\Line\in\Pi} \xi(\Line) \sum_{\dist(\Line',\origin)<A}
 s(\Line,\Line';\Pi\setminus\{\Line,\Line'\})
 \quad=\quad
 \sum_{\Line\in\Pi} \xi(\Line) \sum_{\dist(\Line',\origin)<A}
 \min\left\{1,
 \frac{v(\Line')^\alpha}{v(\Line)^\alpha}\right\}
 \quad=\quad\infty\,,
\]
because of density everywhere of the set of lines in \(\Pi\) of speed lower than
 any prescribed positive threshold.
 In case \(0<\alpha\leq\gamma-1\), monotonicity considerations
 mean that it suffices to consider 
 the boundary case \(\alpha=\gamma-1\). 
 The relevant quantity is then
 \[
  \sum_{\Line\in\Pi, v(\Line)\in(0,1)} v(\Line)^{\gamma-1}
  \Indicator{\Line\text{ hits a fixed unit interval}}\,.
 \]
This admits the stochastic lower bound \(\lim_{n\to\infty} H_n\),
where:
\begin{equation}\label{eq:lower-bound}
 H_n\quad=\quad
 \sum_{r=1}^n N_r a_r^{\gamma-1}
 \,,
\end{equation}
where the \(N_r\) are independent Poisson\((1)\) random variables
and \(0<a_{r+1}<a_r\leq1\) is chosen so that
\[
 \frac{a_{r+1}^{-\gamma-1}-a_{r}^{-\gamma-1}}{\gamma-1}\quad=\quad1\,.
\]
Decomposing the process \(H_n\) as the sum of
a convergent \(L^2\) martingale and a divergent harmonic sum,
\[
 H_n \quad=\quad 
 \left(\sum_{r=1}^n a_r^{\gamma-1} \times \left(N_r-1\right)\right)
 + \sum_{r=1}^n a_r^{\gamma-1}\,,
\]
we deduce that the lower bound, which is the limit of \eqref{eq:lower-bound} as \(n\to\infty\),
almost surely diverges to \(+\infty\).
\end{proof}

We can now formally define the notion of a \SIRSNRRF.
\begin{defn}\label{def:SIRSN-RRF}
 Consider a Poisson line \SIRSN 
 or \SIRSN candidate \(\Pi\) with parameter \(\gamma\geq2\).
 A \SIRSNRRF based on \(\Pi\) 
 (when quenched by conditioning on \(\Pi\))
is a balanced
delineated reversible scattering process
on state-space
\(\StateSpace\).
with scattering classes given by the lines of \(\Pi\),
and satisfying
similarity-equivariance,
with zero deficit, with \(\kappa(\Line)=v(\Line)^\alpha\)
for some \(\alpha>\gamma-1\).
\end{defn}

In summary, the (scattering) probability for 
the \SIRSNRRF switching from \(\Line_1\) to \(\Line_2\) is given by
\begin{equation}\label{eq:switch}
 s_a \quad=\quad s_{(\Line_1,\Line_2)}
 \quad=\quad \min\left\{1, \frac{v_2^\alpha}{v_1^\alpha}\right\}
 \qquad \text {for some } \alpha\in(\gamma-1,\infty)\,.
\end{equation}
Thus the \RRF always switches to faster lines, but switches to slower lines with probability proportional to a power of the relative speed of the new, slower, line.
(After a successful switch to a new line, the new direction of travel is chosen equiprobably.)

\begin{remark}
 The work of this section
 implies that,
 if
 \(\alpha>\gamma-1\) and there is zero defect,
 then
 the
 equilibrium measure 
 \(\pi_{\Line_1,\Line_2}=\min\{v(\Line_1)^\alpha,v(\Line_2)^\alpha\}\)
 automatically generates scale-invariance
 for the {\SIRSNRRF} 
 viewed as a random process in a random environment. 
 The constraint \(\alpha>\gamma-1\)
 ensures that the scattering times 
 \(0=\tau_0<\tau_1<\tau_2<\ldots< \zeta\)
 are well-defined as the line switching times
 mentioned in the procedure described after
 Definition \ref{def:SIRSN-RRF}.
\end{remark}

\begin{remark}\label{rem:similarity-invariance}
 If \(Z\) is a \SIRSNRRF of parameter \(\alpha\) on \(\Pi\),
 and \({\Similarity}\) is a similarity, then
 \({\Similarity}Z\) is a \SIRSNRRF of parameter \(\alpha\) on \({\Similarity}\Pi\). 
 This follows from similarity-invariance of the scattering probabilities
 and consideration of the simulation algorithm
for such a scattering process \(Z\), given
 in the proof of Theorem \ref{thm:mh-ratio}.
\end{remark}

\begin{lem}\label{lem:irreducible}
 A \SIRSNRRF based on a Poisson line \SIRSN 
 or \SIRSN candidate \(\Pi\)
 (when quenched by conditioning on \(\Pi\))
 forms an irreducible
 Markov chain on the state-space 
 \(\StateSpace\).
\end{lem}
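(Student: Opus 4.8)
The plan is to fix a typical realization of $\Pi$ and verify that the quenched chain $Z$ on $\StateSpace$ is irreducible, i.e.\ that for any two states $a=(\Line_-,\Line_0)$ and $b=(\Line'_-,\Line'_0)$ there is a finite path of positive quenched probability from $a$ to $b$. I would begin by recording the almost-sure geometric facts about the speed-marked line process that drive everything: (G1) any two lines of $\Pi$ meet (the angle-marginal is diffuse, so no two lines are parallel); (G2) along any line $\Line$ the crossings by lines of speed at least $v(\Line)$ are locally finite, since lines of speed $\geq v_0$ form a proper Poisson line process of finite intensity; and (G3) along any line $\Line$ the crossings by lines of speed strictly less than $v(\Line)$ are dense. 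These are exactly the density and local-finiteness statements already exploited in the proof of Theorem \ref{thm:similarity-invariance}.

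Next I would translate the scattering representation into a one-step reachability criterion. By Theorem \ref{thm:delineated} together with $\omega_{a,\pm}=\half$ and the \SIRSNRRF scattering probabilities \eqref{eq:switch}, the transition $(\Line_-,\Line_0)\to(\Line_0,\Line_+)$ has positive probability exactly when $\Line_+$ crosses $\Line_0$ and, travelling along $\Line_0$ from $\Line_-\cap\Line_0$ towards $\Line_0\cap\Line_+$, no line of speed at least $v(\Line_0)$ crosses $\Line_0$ strictly in between: for such a blocking crossing $\Line_w$ one has $s_{(\Line_0,\Line_w)}=\min\{1,(v(\Line_w)/v(\Line_0))^\alpha\}=1$, killing the transmission product, whereas for the slower crossings that are passed the product of the $(1-s_{\widetilde z})$ factors is strictly positive by Corollary \ref{cor:local-summability}, and the final factor $s_{(\Line_0,\Line_+)}$ is always positive. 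This isolates two elementary moves: a \emph{forced up-move}, scattering onto the nearest speed-$\geq v(\Line_0)$ crossing on a chosen side; and a \emph{down-move}, scattering onto any slower crossing lying before that nearest faster crossing.

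With these moves I would construct the path backwards from $b$. The last step must run along $\Line'_-$ and scatter at $P=\Line'_-\cap\Line'_0$ onto $\Line'_0$; by the reachability criterion this has positive probability provided the chain arrives on $\Line'_-$ at a point $Q_0$ lying, on the correct side of $P$, in the maximal open sub-interval of $\Line'_-$ abutting $P$ that is free of speed-$\geq v(\Line'_-)$ crossings between $Q_0$ and $P$ (this interval is nonempty by (G2), and $s_{(\Line'_-,\Line'_0)}>0$ supplies the final scatter). Using (G3) I may take $Q_0$ to be the crossing of $\Line'_-$ by a slower line $\Line''$; since $\Line'_-$ is then the faster line at $Q_0$, arrival at the state $(\Line'',\Line'_-)$ is produced by a forced up-move from a suitable state on $\Line''$. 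Iterating this observation, and using (G1) so that the lines involved all meet the lines reachable forwards from $a$ by repeated up-moves, I would splice the backward construction to the forward-reachable set of $a$, obtaining a finite positive-probability path.

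The main obstacle is \emph{position control}: a direct hop lands the chain at a geometrically fixed intersection point, and transmission is blocked by any faster crossing, so one cannot freely dictate where on a line the chain arrives. The resolution is the interplay of (G2) and (G3) --- the dense slower crossings give enough freedom to route intermediate intersections into prescribed faster-free sub-intervals, while local finiteness of the faster crossings both creates those sub-intervals and, through Corollary \ref{cor:local-summability}, keeps the transmission products strictly positive. Where the backward regress threatens not to terminate, I would invoke the twisted reversibility identity $\pi_x\,p^{(n)}_{x,y}=\pi_y\,p^{(n)}_{\widetilde y,\widetilde x}$ implied by Definition \ref{def:reversable} (valid since $\pi>0$ for the \SIRSNRRF), which converts an awkward ``descending'' accessibility requirement into an ``ascending'' one realised by forced up-moves; this symmetry is what makes the finiteness of the connecting path transparent.
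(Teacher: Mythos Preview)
Your one-step reachability analysis is correct, and (G1)--(G3) are the right geometric ingredients. But there is a genuine gap at the ``splice'' step. The backward construction from \(b\) descends through slower and slower lines \(\Line'_-,\Line'',\Line''',\ldots\), and you never show how this meets the forward-reachable set from \(a\). The reversibility identity \(\pi_x\,p^{(n)}_{x,y}=\pi_y\,p^{(n)}_{\widetilde y,\widetilde x}\) merely exchanges the problem \(a\to b\) for the structurally identical problem \(\widetilde b\to\widetilde a\); it does not convert a non-terminating descending chain into a terminating ascending one. What remains unaddressed is the core navigation problem: given two intersections on the \emph{same} line \(\Line_0\), separated by crossings with faster lines (each of which forces a switch), how does the \RRF pass from one to the other with positive probability? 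Every route you sketch --- whether forward from \(a\), backward from \(b\), or after applying reversibility --- eventually runs into exactly this obstruction, and you give no mechanism to get past a forced-switch point.

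The paper's argument is shorter and is organised around precisely this obstruction. First it makes a reduction you do not: since any two lines of \(\Pi\) meet, it suffices to show \((\Line_-,\Line_0)\to(\Line_0,\Line_+)\) for arbitrary \(\Line_+\neq\Line_-\); general \(a\to b\) then follows by two such steps via an intermediate line. The navigation problem along \(\Line_0\) is then solved using the Crofton tessellation generated by \(\Pi_{\geq v(\Line_0)}\). The key point is that along any edge of this tessellation --- a segment of some \(\Line\in\Pi_{\geq v(\Line_0)}\) between two adjacent tessellation vertices --- there can be \emph{no} crossing by a line of speed \(\geq v(\Line)\), because any such line already belongs to \(\Pi_{\geq v(\Line_0)}\) and would create an intermediate vertex. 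Hence the \RRF traverses any tessellation edge with positive probability simply by declining to switch at the (slower) crossings it meets. The finite chain of tessellation cells meeting the segment of \(\Line_0\) between \(\Line_-\cap\Line_0\) and \(\Line_0\cap\Line_+\) then furnishes a detour around each forced-switch obstacle, returning to \(\Line_0\) inside the final cell for the last hop. Your construction never isolates this ``no faster crossings between adjacent tessellation vertices'' observation, which is what makes the detour terminate in finitely many steps.
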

\begin{proof}
 It suffices to show that the \SIRSNRRF can move from \((\Line_-,\Line_0)\)
 to \((\Line_0,\Line_+)\). 
 
 The key observation is that the \SIRSNRRF is always compelled to switch onto a faster line, but may or may not choose to switch to a slower line.
 
 Consequently,
 if these intersections are not separated
 by a line of greater speed than \(v(\Line_0)\) then the
 \SIRSNRRF can travel from
 \((\Line_-,\Line_0)\)
 to \((\Line_0,\Line_+)\)
 in a single move. If there are such lines, then consider the sequence of cells
 from the 
 Crofton tessellation formed by \(\Pi_{\geq v(\Line_0)}\) 
 which intersect the segment
 of \(\Line_0\)
between \((\Line_-,\Line_0)\)
 and \((\Line_0,\Line_+)\). 
 With positive probability, the \SIRSNRRF can move 
 from \((\Line_-,\Line_0)\)
 along \(\Line_0\) in the direction of \((\Line_0,\Line_+)\),
 but has to switch to the Crofton tessellation when it is first encountered.
 The \SIRSNRRF can then use the boundaries of these cells 
 to move 
 to a point on \(\Line_0\) also lying on the
 boundary Crofton cell containing \((\Line_0,\Line_+)\);
 \(\Line_0\) can then be used to move to \((\Line_0,\Line_+)\).
\end{proof}

The continuous-time variant of the {\SIRSNRRF},
\((X_t:t\geq0)\),
can be recovered from the sampled process \((Z_n=X_{\tau_n}:n\geq0)\) 
simply by interpolating between sampling points,
requiring the \RRF \(X\) 
to travel at top permissible speed \(Y_{\tau_n}\) between
scattering times \(\tau_n\) and \(\tau_{n+1)}\).
In principle there is the possibility that the resulting continuous-time process might explode to infinity
in finite time \(\zeta<\infty\). 
We shall discuss this further in
section \ref{sec:conclusion}.

In the next section we address the question of
the long-run behaviour of the
(log-)speed process \(\log(Y)\) of the {\SIRSNRRF}.

 \section[Environment viewed from the SIRSN-RRF]{Environment viewed from the \SIRSNRRF}\label{sec:environment} 
 
We now focus on the 
(discrete-time) \SIRSNRRF \(Z\) of index \(\alpha>\gamma-1\) 
based on the planar Poisson line \SIRSN of parameter \(\gamma>2\),
or even the non-\SIRSN case of \(\gamma=2\), 
as discussed in Section \ref{sec:SIRSN-RRF}. 
This scattering process
can be viewed as possessing a
random and \(\Pi\)-dependent state-space 
\(\StateSpace\).
 Recall that the lines 
 of \(\Pi\)
 are speed-marked, 
 so the state 
 \((\Line_0,\Line_1)\) of \(Z_1\) includes information on the speed \(v(\Line_0)\)
 previous to the switch
 and also the current speed \(v(\Line_1)\).
  Conditioned on \(\Pi\), the process \(Z\)
  is Markovian with a discrete invariant measure
\(\pi_{(\Line_0,\Line_1)}
 =\min\left\{v(\Line_0)^\alpha, v(\Line_1)^\alpha\right\}\)
 for some \(\alpha>\gamma-1\),
 with respect to which \(Z\) satisfies dynamical reversibility.
The discrete invariant measure 
is never summable, since any summation has to extend over 
all the intersection points 
of the stationary line process \(\Pi\).
Consequently 
a stationary version of \(Z\) cannot exist,
and indeed the invariant measure is defined only up to a positive multiplicative constant.
Nonetheless we will see that
the environment viewed from \(Z\) can be converted into a stationary process
(following the classic construction for a random walk in a random environment), 
so long as it is reduced by centering, rotation, \emph{and} (most especially) rescaling.

To begin with, consider the  
\RRF \(Z\) in its quenched environment \(\Pi\).
This dynamically reversible
process can be related to a 
symmetric Dirichlet form which is quenched 
(conditioned on \(\Pi\))
and defined
for the random state-space 
\(\StateSpace\)
as follows:
suppose \(f\) and \(g\) are functions on
\(\StateSpace\) satisfying the symmetry condition
\(f(a)=f(\widetilde{a})\), \(g(b)=g(\widetilde{b})\).
Then
\begin{multline}\label{eq:quenched-dirichlet}
 B^\text{quenched}(f,g) \quad=\quad 
 \sum_{a\in\StateSpace} 
 \pi_a f(a)  \Expect{g({Z}_1) | Z_0=a, \Pi}
\\
\quad=\quad
 \underset{a\neq b\in\StateSpace}{\sum\sum}
 \pi_a p_{a,\widetilde{b}} 
 \;  f(a)g(b )
\\
\quad=\quad 
 \underset{a\neq b\in\StateSpace}{\sum\sum}
 \pi_a\omega_{a,\widetilde{b}} s_{\widetilde{b}} 
 \;  f(a )g(b )
 \quad=\quad
 \underset{a\neq b\in\StateSpace}{\sum\sum}
 \pi_b p_{b,\widetilde{a}}
 \;  f(a )g(b )
 \,,
\end{multline}
where the last step arises from dynamical reversibility
(since \(\pi_a p_{a,\widetilde{b}}=\pi_b p_{b,\widetilde{a}}\)),
and establishes the symmetry of the quenched 
Dirichlet form.
Note that the
equilibrium probabilities \(\pi_a\),
the transition probabilities \(p_{a,\widetilde{b}}\),
and
the 
transmission probabilities \(\omega_{a,\widetilde{b}}\)
all
depend implicitly on the random environment given 
by
the Poisson line \SIRSN 
or \SIRSN candidate \(\Pi\).

A Cauchy-Schwarz argument shows that
the Dirichlet form \(B^\text{quenched}(f,g)\)
is well-defined if the
functions \(f\) and \(g\) belong to the random Hilbert space \(\mathfrak{H}_\Pi\)
of functions \(h\)
defined on \(\StateSpace\) 
satisfying the symmetry requirement 
\(h(\Line_1,\Line_2)=h(\Line_2,\Line_1)\)
and
\begin{equation}\label{eq:quenched-hilbert-space}
 \sum_a h(a)^2 \pi_a
 \quad=\quad
 \underset{a\in\StateSpace}{\sum} 
 h(a)^2
 \pi_{a}
 \quad<\quad\infty
 \,.
\end{equation}

For completeness of exposition, we observe that 
measure-theoretic details for
such symmetric
\(h(a)=h(\Line_0,\Line_1)\)
can be dealt with 
by viewing
\(h(\Line_0,\Line_1)=h(x,\theta_0,v_0,\theta_1,v_1)\)
as a measurable function of one planar and four real variables,
using the \(2:1\) mapping \(\StateSpace\to\Reals^2\)
determined by 
\(\Line_0\cap\Line_1=\{x\}\) 
to deliver
the planar variable \(x\in\Reals^2\), while
\(\theta_i\) signifies the direction,
\(v_i\) the speed of \(\Line_i\).
We can thus
regard \(f\) and \(g\) as functions of \(\Reals^2\times(0,\pi)\times(0,\infty)\times(0,\pi)\times(0,\infty)\).
To be pedantic, we focus on functions in
the subspace of  \(\mathfrak{H}_\Pi\)
which is the \(L^2\) closure 
of  
\(h(x,\theta_0,v_0,\theta_1,v_1)\)
which depend continuously on the arguments
\(x,\theta_0,v_0,\theta_1,v_1\)
and depend only on finitely many evaluations
of events involving whether specified open subsets
of \(\Reals^2\) are hit by \(\Pi\)-lines of speeds exceeding specified positive thresholds.

By Fubini-argument methods
this leads us to consider an annealed Dirichlet form
defined for functions 
\(h(a;\Pi)=h(x,\theta_0,v_0,\theta_1,v_1;\Pi)\)
in the deterministic Hilbert space \(\mathfrak{H}\)
of functions
which also depend on the random pattern specified
by \(\Pi\),
defined by
\begin{equation}\label{eq:annealed-hilbert-space2}
 \mathfrak{H}
 \quad=\quad
 \left\{
 h
 \;:\;
 \Expect{
 \underset{a\in\StateSpace}{\sum}  
 h(a;\Pi)^2 \pi^\Pi_a}<\infty\;\
\text{ with }\;
 h(a;\Pi)=h(\widetilde{a},\Pi)
 \right\}\,.
 \end{equation}
%
Again we restrict to functions in
the \(L^2\) closure 
of  
\(h(x,\theta_0,v_0,\theta_1,v_1)\)
which depend continuously on the arguments
\(x,\theta_0,v_0,\theta_1,v_1\)
and depend only on finitely many evaluations
of events involving whether specified open subsets
of \(\Reals^2\) are hit by \(\Pi\)-lines of speeds exceeding specified positive thresholds.
In particular, measurability of functions
in the subspace will use
the \(\sigma\)-algebra
\(\sigma\{\Pi_{\geq u}:u>0\}\),
where \(\Pi_{\geq u}\) is the locally finite Poisson line process
of \(\Pi\)-lines of speed exceeding \(u\),
viewed as a random closed set and
endowed with the hitting \(\sigma\)-algebra 
(also called \emph{Effros \(\sigma\)-algebra})
generated by hitting events \([\Pi_{\geq u}\cap K\neq\emptyset]\)
for compact sets \(K\subset\Reals^2\)
\cite[\S6.1.2]{ChiuStoyanKendallMecke-2013}.

The annealed Dirichlet form is given by
\begin{multline}\label{eq:annealed-dirichlet}
 B(f,g) \quad=\quad 
 \Expect{
 \sum_{a\in\StateSpace} 
 \pi^\Pi_a f(a;\Pi\setminus a)  
 \Expect{g(\widetilde{Z}_1;\Pi\setminus Z_1) | Z_0=a, \Pi}
 }
 \\
 \quad=\quad
 \Expect{
 \underset{a\neq b\in\StateSpace}{\sum\sum}
 \pi^\Pi_a p_{a,\widetilde{b}}^\Pi 
 \;  f(a;\Pi\setminus a)g(b;\Pi\setminus b)
 }
 \\
\quad=\quad
 \Expect{
 \underset{a\neq b\in\StateSpace}{\sum\sum}
 \pi^\Pi_a\omega_{a,\widetilde{b}}^{\Pi\setminus(a,b)}
 s_{\widetilde{b}} 
 \;  f(a;\Pi\setminus a)g(b;\Pi\setminus b)
 }\,,
\end{multline}
where \(f\) and \(g\) 
are functions of the random
environment \(\Pi\) as well as of 
\(\Reals^2\times(0,\pi)\times(0,\infty)\times(0,\pi)\times(0,\infty)\),
and both belong to \(\mathfrak{H}\).
The 
superscripts in
\(\pi^\Pi_a\),
\(p_{a,\widetilde{b}}^\Pi\) 
and \(\omega_{a,\widetilde{b}}^{\Pi\setminus(a,b)}\),
in \eqref{eq:annealed-dirichlet}
and \eqref{eq:annealed-hilbert-space2}
emphasize dependence
on the environment \(\Pi\) as well as \(a\) and \(b\). 
We use an abbreviated notation 
\(\Pi\setminus a=\Pi\setminus\{\Line_-,\Line_+\}\) 
when \(a=(\Line_-,\Line_+)\),
and 
\(\Pi\setminus (a,b)=\Pi\setminus\{\Line_-,\Line_+,\Line_0,\Line_1\}\) 
when \(a=(\Line_-,\Line_+)\) and \(a=(\Line_0,\Line_1)\).

The annealed 
symmetric
Dirichlet form \eqref{eq:annealed-dirichlet}
can be associated with
the (rather trivial) augmentation of the Markov chain
\(Z\) which is given by
\(((Z_n,\Pi):n\geq0)\).
Thus the augmentation simply
consists of adding the time-constant random process
\(\Pi\). 
Note that we can use \(B(f,g)\) to recover the
joint distribution of \(Z_0\) and \({Z}_1\),
and hence the
conditional probability distribution
\(\Law{{Z}_1 | {Z}_0, \Pi}\).
So knowledge of the annealed 
symmetric
Dirichlet form \eqref{eq:annealed-dirichlet}
identifies the annealed stochastic dynamics 
of the \SIRSNRRF \(Z\).

We now introduce
the notion of 
the
\emph{relative environment process} \(\Zenv\)
and the
\emph{reduced relative environment process} \(\Zenv^{(0)}\)
for \(Z\);
\(\Zenv^{(0)}_n\) is the environment \(\Pi\) viewed
from
\(Z_n=(\Line_{-,n},\Line_{0,n})\),
obtained by
removing the lines \(\Line_{-,n}\) and \(\Line_{0,n}\)
(reduction),
then 
translating, rotating and rescaling 
\(\Pi\setminus\{\Line_{-,n},\Line_{0,n}\}\)
into a standard form
(relativization).
In detail, for each state
\(a=(\Line_{-},\Line_{0})\)
we introduce a proper similarity \(\Similarity_a\)
whose inverse can be used to deliver the required standard form.
(Recall that a
similarity is simply an affine-linear transformation
of Euclidean space:
a \emph{proper} similarity
is one which preserves the sign of the area differential.)
If \(\Line_-\cap\Line_0=\{z\}\) then 
we require that \(\Similarity_a \origin=z\);
furthermore \(\Similarity_a\) must send
the \(x\)-axis \({\Line_{*}^0}\) (with 
sense given by standard direction, and unit speed) 
to the line \(\Line_0\) (with prescribed sense
and rescaling so it has the required speed \(v(\Line_0)\));
finally we require that \(\Similarity_a\) sends
\({\Line_{*}^\theta}\) to \(\Line_-\), where
the line
\({\Line_{*}^\theta}\) passing through \(\origin\)
makes angle \(\theta=\ArcAngle(\Line_-,\Line_0)\) with 
\({\Line_{*}^0}\).
The scaling component of the similarity \(\Similarity_a\) is fixed
by the requirement that \(\Line_*^0\) has unit speed:
as a consequence the speed of \(\Line_*^\theta\)
must be \(v(\Line_-)/v(\Line_0)\). These requirements 
uniquely
define the 
proper similarity \(\Similarity_a\).

\begin{defn}\label{def:reduced-relative-environemt}
 The \emph{relative environment} \(\Zenv_n\) of \(Z_n\) is given by
 \(\Similarity_{Z_n}^{-1}\Pi\).
 The \emph{reduced relative environment} \(\Zenv^{(0)}_n\)
 of
 \(Z_n=(\Line_-,\Line_0)\) is 
 obtained by removing the lines 
 \(\Similarity_{Z_n}^{-1}\Line_-\) and \(\Similarity_{Z_n}^{-1}\Line_0\)
 corresponding to 
 \(\Line_-\) and \(\Line_0\): 
 \[
 \Zenv^{(0)}_n \quad=\quad 
 \Similarity_{Z_n}^{-1}\left(\Pi\setminus Z_n\right)
 \quad=\quad
 \Similarity_{Z_n}^{-1}\left(\Pi\right)\setminus \Similarity^{-1}_{Z_n} Z_n
 \quad=\quad
 \Similarity_{Z_n}^{-1}\left(\Pi\right)
 \setminus\left\{\Similarity_{Z_n}^{-1}\Line_-,\Similarity_{Z_n}^{-1}\Line_0\right\}\,.
 \]
\end{defn} 
So the relative environment of \(Z_n\)
can be parametrized by 
 the relative speed \(v(\Line_-)/v(\Line_0)\)
of the immediately preceding line \(\Line_{-}\)
when compared with the current speed \(\Line_0\),
the angle 
\(\ArcAngle(\Line_-,\Line_0)\)
between current and immediately preceding lines,
and
the point pattern 
\(\Zenv^{(0)}_n\).

The transmission
probability 
\(\omega_{a,\widetilde{b}}^{\Pi\setminus(a,b)}\) 
in \eqref{eq:annealed-dirichlet}
must vanish unless 
\(a\) and \(b\) belong to the same scattering class \(\Equiv\). Moreover in this case the states \(a\) and \(b\)
must share a line:
\(a=(\Line_-,\Line_0)\) and \(b=(\Line_+,\Line_0)\).
Applying dynamical reversibility
of the quenched process,
we find
\(\pi_a/s_{\widetilde a}=v(\Line_0)^\alpha=\kappa(\Equiv)\) is a function of the scattering class \(\Equiv\) alone.
Hence
\eqref{eq:annealed-dirichlet}
can be rewritten as
\begin{multline*}
 B(f,g)\quad=\quad
 \Expect{\sum_{\Equiv\in\EquivClass}
 \left(
 \underset{a\neq b\in\EquivClass}{\sum\sum}
 \kappa(\Equiv)
 s_{\widetilde{a}}
 \omega_{a,\widetilde{b}}^{\Pi\setminus(a,b)}
 s_{\widetilde{b}} 
 \;  f(a;\Pi\setminus a)g(b;\Pi\setminus b)
 \right)
 }
 \\
 \;=\;
  \operatorname{\mathbb{E}}\Bigg[
  \sum_{\Line_0\in\Pi}
 v(\Line_0)^\alpha
 \Bigg(
 \underset{\Line_-\neq \Line_+\in\Pi\setminus\{\Line_0\}}{\sum\sum}
 s_{(\Line_0,\Line_-)}
 f((\Line_-,\Line_0);\Pi\setminus \{\Line_-,\Line_0\})
 \;
 \\
 \qquad
 s_{(\Line_0,\Line_+)} 
 g((\Line_+,\Line_0);\Pi\setminus \{\Line_+,\Line_0\})
 \;
 \omega_{a,\widetilde{b}}^{\Pi\setminus(a,b)}
 \Bigg)
 \Bigg]
 \,,
\end{multline*}
where (compare Theorem \ref{thm:delineated})
\[
 \omega_{a,\widetilde{b}}^{\Pi\setminus(a,b)}
 \quad=\quad
 \omega_{(\Line_-,\Line_0),(\Line_0,\Line_+)}^{\Pi\setminus\{\Line_-,\Line_0,\Line_+\}}
 \quad=\quad
 \mathop{\underset{\Line\text{ separates}}{\prod}}_{
 \Line_0\cap\Line_+
 \text{ and } 
 \Line_0\cap\Line_+}
 \left(1- s_{(\Line_0,\Line)}\right)
 \,.
\]
Note that 
the stochastic dynamics of \(Z\) are invariant
under similarity transformations,
because they depend only on the
scattering probabilities
\(s_{(\Line_0,\Line_-)}\) and \(s_{(\Line_0,\Line_+)}\),
and the transmission probability
\(\omega_{a,\widetilde{b}}^{\Pi\setminus(a,b)}\),
all of which possess this invariance.
It follows that the relative environment process
\(\Zenv=(\Zenv_n=\Similarity_{Z_n}^{-1}\Pi:n\geq0)\),
when quenched by conditioning on \(\Pi\),
is again a Markov chain.

However we can say more.
The 
annealed Dirichlet form \eqref{eq:annealed-dirichlet} 
can be used to 
establish that \(\Zenv\) 
\emph{when not conditioned on \(\Pi\)}
(hence annealed)
forms a stationary process for suitably
distributed random initial
starting points \(X_0\in\StateSpace\), 
and it can then be used to compute
the ensuing stationary distribution of \(\Zenv\).
The theory is closely related to that of Palm
conditioning for point processes,
and similarly requires careful interpretation
although the underlying idea is simple enough:
for some \(c>0\),
the \SIRSN candidate \(\Pi\) is conditioned to have at least one intersection within \(c\) of the origin \(\origin\), such that the current line
has
speed exceeding \(c\),
and the intersection
with the previous line 
is chosen with weight based on the probability of switching. 
Our results will cover
the evolution of the relative environment process \(\Zenv\)
for
\(Z\) 
begun at one of these intersection points 
chosen according to the indicated weighting. 

To facilitate our argument, we first establish
a factorization result
for suitable \(\pi\)-weighted sums over 
\(\StateSpace\).

\begin{lem}\label{lem:pi-weighted-sum}
 Given a Poisson line process 
 \(\Pi\) based on the parameter \(\gamma>1\), consider a non-negative measurable
 function \(\xi((\Line_-,\Line_0);\Pi\setminus\{\Line_-,\Line_0\})\),
 defined for \((\Line_-,\Line_0)\in\StateSpace\),
 which admits a factorization
 \begin{multline}\label{eq:fn-factor}
  \xi((\Line_-,\Line_0);\Pi\setminus\{\Line_-,\Line_0\})\quad=\quad
    \xi_\text{invar}((\Line_-,\Line_0); \Pi\setminus{\Line_-,\Line_0})
  \;\times\;
  \\
  \;\times\;
  \xi_\text{speed}(v(\Line_0))
  \;\times\;
  \xi_\text{config}(\sdist(\Line_-\cap\Line_{0},\Line^{\bot}_{0}\cap\Line_0), 
  \sdist(\Line_0,\origin), \ArcAngle(\Line_0,{\Line_{*}^0})))
  \,,
 \end{multline}
 where \(\Line^\bot_0\) is the line through \(\origin\)
 perpendicular to \(\Line_0\).
 Here \(\xi_\text{invar}\) is a similarity-invariant function of 
 its arguments,
 while \(\xi_\text{speed}\) is a function of the current speed
 and \(\xi_\text{config}\) is a function of three parameters
 describing the location and orientation of the configuration \((\Line_-,\Line_0)\).
 Suppose the intersections 
 \(a=(\Line_-,\Line_0)\in\StateSpace\)
 are weighted by 
 \(\pi_a=\min\{v(\Line_-)^\alpha,(\Line_0)^\alpha\}\)
 for some fixed \(\alpha\in(\gamma-1,\infty)\).
 Then
 \begin{multline}\label{eq:int-factor}
  \Expect{
  \underset{\Line_-\neq\Line_0\in\Pi}{\sum\sum}
   \xi((\Line_-,\Line_0);\Pi\setminus\{\Line_-,\Line_0\})\;
   \pi_{(\Line_-,\Line_0)}
  }
  \quad=\quad
  \\
  \left(\frac{\gamma-1}{2}\right)^2
  \int_{-\infty}^\infty\int_{-\infty}^\infty\int_0^\pi
  \xi_\text{config}(s,r,\theta)
  {\d}\theta {\d}s {\d}r
  \;\times\;
  \int_0^\infty \xi_\text{speed}(v) v^{\alpha-2\gamma+1}{\d}{v}
  \;\times\;
  \\
  \;\times\;
  \int_{-\infty}^\infty\int_0^\pi
  \Expect{\xi_1(e^t,\phi,\Pi)}
  \min\{e^{(\alpha - (\gamma-1))t}, e^{-(\gamma-1)t}\}
  \sin\phi {\d}{\phi}\,{\d}{t}
  \,.
 \end{multline}
 Here \(\xi_1(e^t,\phi,\Pi)\) is defined in terms of 
 the similarity-invariant function \(\xi_\text{invar}\)
 by
 \[
  \xi_1\left(
    \frac{v(\Line_-)}{v(\Line_0)}, 
    \ArcAngle(\Line_-,\Line_0),
    \Similarity_{(\Line_-,\Line_0)}^{-1}(\Pi\setminus\{\Line_-,\Line_0\})\right)
  \quad=\quad
  \xi_\text{invar}((\Line_-,\Line_0); \Pi\setminus\{\Line_-,\Line_0\})\,.
 \]
 \end{lem}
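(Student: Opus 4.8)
The plan is to evaluate the expected double sum by applying the Slivnyak-Mecke theorem (Theorem~\ref{thm:Slivnyak-Mecke}) twice, once for each of the summation lines $\Line_-$ and $\Line_0$, and then to separate the resulting six-fold integral into the three advertised factors by a judicious choice of coordinates. The second-order form of Slivnyak-Mecke follows by iterating the first-order statement; applied to the ordered pair $(\Line_-,\Line_0)$ it converts $\Expect{\sum\sum \xi\,\pi}$ into $\int\int \Expect{\xi((\Line_-,\Line_0);\Pi)\,\pi_{(\Line_-,\Line_0)}}\,\nu({\d}\Line_-)\,\nu({\d}\Line_0)$, in which $\Line_-$ and $\Line_0$ are now \emph{deterministic} lines while $\Pi$ is the full (independent) line process. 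Since $\xi$ and $\pi$ are non-negative, Tonelli justifies the interchange and, later, the factorization of the integral.

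The crucial device is to parametrize the two lines asymmetrically. For $\Line_0$ I would use the $(v_0,r_0,\theta_0)$-coordinates of \eqref{eq:plp-sirsn-intensity}, setting $v=v_0$, $r=r_0$, $\theta=\theta_0$; for $\Line_-$ I would instead use the equivalent form \eqref{eq:plp-sirsn-intensity-alt} \emph{taking $\Line_0$ itself as the reference line}. This produces exactly the sine-weight $\sin\phi$ with $\phi=\ArcAngle(\Line_-,\Line_0)$, and it produces the signed position $s=\sdist(\Line_-\cap\Line_0,\Line_0^\bot\cap\Line_0)$ of the intersection measured along $\Line_0$, which is precisely the argument of $\xi_\text{config}$. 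Thus $\nu({\d}\Line_0)\,\nu({\d}\Line_-)=\bigl(\tfrac{\gamma-1}{2}\bigr)^2 v_0^{-\gamma} v_-^{-\gamma}\sin\phi\,{\d}v_0\,{\d}r_0\,{\d}\theta_0\,{\d}v_-\,{\d}s\,{\d}\phi$, with $s$, $r=r_0$, $\theta=\theta_0$ all carrying plain Lebesgue measure.

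Next I would substitute $v_-=v e^t$, $v_0=v$ (so $t=\log(v(\Line_-)/v(\Line_0))$), with Jacobian $|{\d}v_0\,{\d}v_-|=v e^t\,{\d}v\,{\d}t$, and collect the speed-dependent factors together with the weight $\pi_{(\Line_-,\Line_0)}=\min\{v_-^\alpha,v_0^\alpha\}$. A short calculation gives $v_0^{-\gamma}v_-^{-\gamma}\,\pi\,{\d}v_0\,{\d}v_- = v^{\alpha-2\gamma+1}\min\{e^{(\alpha-(\gamma-1))t},e^{-(\gamma-1)t}\}\,{\d}v\,{\d}t$, already exhibiting the $v^{\alpha-2\gamma+1}$ weight of the speed integral and the exponential weight of the invariant integral. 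Under this relativized parametrization the factorization \eqref{eq:fn-factor} separates cleanly: $\xi_\text{config}$ depends only on $(s,r,\theta)$, $\xi_\text{speed}$ only on $v$, and $\xi_\text{invar}$ only on $(t,\phi)$ together with the relativized pattern $\Similarity_{(\Line_-,\Line_0)}^{-1}\Pi$.

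The step I expect to be the main obstacle is the identification of the inner expectation $\Expect{\xi_\text{invar}((\Line_-,\Line_0);\Pi)}$ with $\Expect{\xi_1(e^t,\phi,\Pi)}$, which rests on showing that the relativized pattern $\Similarity_{(\Line_-,\Line_0)}^{-1}\Pi$ has the same law as $\Pi$. This is exactly where the scale-invariance of the Poisson line \SIRSN is used: the proper similarity $\Similarity_{(\Line_-,\Line_0)}$ carries a geometric scaling by $v_0^{\gamma-1}$ together with the matching speed rescaling by $v_0$ dictated by \eqref{eq:SIRSN-scaling}, and since the intensity \eqref{eq:plp-sirsn-intensity} is invariant under \eqref{eq:SIRSN-scaling} and under Euclidean motions, the combined action leaves $\Law{\Pi}$ invariant; the resulting law no longer depends on $(\Line_-,\Line_0)$, so the deterministic arguments $e^t$ and $\phi$ survive while the pattern is replaced by a fresh copy of $\Pi$. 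The only care needed is the routine bookkeeping that the deterministic lines $\Line_-,\Line_0$ almost surely do not belong to the independent $\Pi$, so that $\Pi\setminus\{\Line_-,\Line_0\}=\Pi$ inside the expectation and the defining relation for $\xi_1$ applies verbatim. With this in hand the six-fold integral factorizes as the product of the $(s,r,\theta)$-, $v$-, and $(t,\phi)$-integrals, yielding the stated formula, with the prefactor $\bigl(\tfrac{\gamma-1}{2}\bigr)^2$ supplied by the two copies of the intensity normalization.
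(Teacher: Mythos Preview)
Your proposal is correct and follows essentially the same approach as the paper's own proof: two applications of Slivnyak--Mecke, the asymmetric parametrization of \(\Line_0\) via \eqref{eq:plp-sirsn-intensity} and of \(\Line_-\) via \eqref{eq:plp-sirsn-intensity-alt} with \(\Line_0\) as reference line, the change of variables \(v_0=v\), \(v_-=ve^t\), and the use of similarity-invariance of \(\Law{\Pi}\) to replace \(\Similarity_{(\Line_-,\Line_0)}^{-1}\Pi\) by a fresh copy of \(\Pi\). The paper's argument is slightly terser but substantively identical.
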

\begin{proof}
 As in the proof of Theorem \ref{thm:similarity-invariance}, first
 apply the Slivnyak-Mecke theorem 
 (Theorem \ref{thm:Slivnyak-Mecke}) twice in succession
 to the left-hand side of \eqref{eq:int-factor}:
 \begin{multline*}
  \Expect{
  \underset{\Line_-\neq\Line_0\in\Pi}{\sum\sum}
   \xi((\Line_-,\Line_0);\Pi\setminus\{\Line_-,\Line_0\})
   \; \pi_{(\Line_-,\Line_0)}
  }
  \quad=\quad
  \\
  \int\int
  \Expect{\xi((\Line_-,\Line_0);\Pi)}
  \pi_{(\Line_-,\Line_0)}
  \nu({\d}{\Line_-})\nu({\d}{\Line_0})\,.
 \end{multline*}
Using the representation corresponding 
to \eqref{eq:plp-sirsn-intensity}
for \(\nu({\d}{\Line_0})\)
(based on \(\origin\) and \({\Line_{*}^0}\)
for reference point and line),
and the representation corresponding 
to \eqref{eq:plp-sirsn-intensity-alt}
for \(\nu({\d}{\Line_-})\)
(based on \(\Line_0\cap{\Line_{*}^0}\) 
and \(\Line_0\) for reference point and line),
we obtain
\begin{multline*}
  \Expect{
  \underset{\Line_-\neq\Line_0\in\Pi}{\sum\sum}
   \xi((\Line_-,\Line_0);\Pi\setminus\{\Line_-,\Line_0\})
   \; \pi_{(\Line_-,\Line_0)}
  }
  \quad=\quad 
  \\
 \left(\frac{\gamma-1}{2}\right)^2 
 \int_{-\infty}^\infty \int_0^\pi \int_0^\infty
 \Bigg(
 \int_{-\infty}^\infty \int_0^\pi \int_0^\infty
 \Expect{\xi_\text{invar}\left(\frac{v_-}{v_0}, \phi, 
 \Similarity_{(\Line_-,\Line_0)}^{-1}(\Pi)\right)}
 \times
 \\
 \times
 \xi_\text{speed}(v_0)
 \times
 \xi_\text{config}(s, r, \theta)
 \times
 \min\{v_-^\alpha,v_0^\alpha\}
 v_-^{-\gamma}\sin\phi {\d}{v_-} {\d}{\theta} {\d}{s}
 \Bigg)
 v_0^{-\gamma}{\d}{v_0} {\d}{\theta} {\d}{r}\,.
\end{multline*}
But scale invariance implies that 
\(\Expect{\xi_\text{invar}\left(\tfrac{v_-}{v_0}, \phi, 
 \Similarity_{(\Line_-,\Line_0)}^{-1}(\Pi)\right)}
 =
 \Expect{\xi_\text{invar}\left(\tfrac{v_-}{v_0}, \phi, 
 \Pi\right)}\).
The result now follows by a simple change of coordinates:
set \(v_0=v\) and \(v_-=v_0e^t\) so that
\({\d}{v_-}{\d}{v_0}=e^t v {\d}{v}{\d}{t}\).
\end{proof}

We can now state and prove the main theorem of this section.
\begin{thm}\label{thm:stationarity}
 Given a \SIRSNRRF \(Z\) on a Poisson line \SIRSN
 or \SIRSN candidate \(\Pi\),
 parametrized by \(\alpha>\gamma-1\) where \(\gamma\) is the 
Poisson line \SIRSN parameter,
the relative environment process \(\Zenv\) for the 
\SIRSNRRF \(Z\) can be made stationary if its initial distribution
can be expressed as three independent components as follows:
\begin{enumerate}[(1)]
 \item\label{result:log-rel-speed}
 the log-relative speed of the line before the current line
 has an asymmetric Laplacian density over \(\Reals\): rate parameter 
 \(\gamma-1\) for positive values, \(\alpha-(\gamma-1)\)
 for negative values;
 \item the angle between current and previous lines has a sine-weighted
 density;
 \item the ensemble 
 \(\Zenv^{(0)}\) of all lines other than current and previous lines
 (the reduced relative environment)
 is distributed as the original speed-marked Poisson line process \(\Pi\).
\end{enumerate}
 \end{thm}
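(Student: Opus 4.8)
The plan is to show that the claimed product law, which I will call $\mu$, is not merely stationary but in fact \emph{reversing} for the relative environment chain $\Zenv$, since reversibility together with $P_\Zenv\mathbf{1}=\mathbf{1}$ immediately yields stationarity. By similarity-equivariance (Remark \ref{rem:similarity-invariance}) the one-step kernel of $Z$ descends to a well-defined kernel $P_\Zenv$ on the quotient of $\StateSpace$ by the similarity group, that is, on the space of relative environments parametrized by the log-relative speed $t=\log(v(\Line_-)/v(\Line_0))$, the angle $\phi=\ArcAngle(\Line_-,\Line_0)$, and the reduced environment $\Zenv^{(0)}$. I would then read off reversibility of $P_\Zenv$ directly from the symmetry of the annealed Dirichlet form $B$ in \eqref{eq:annealed-dirichlet}, which was already established from dynamical reversibility.

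First I would test $B(f,g)$ against functions $f,g$ that are functions of the relative environment alone; under the constraint $h(a)=h(\widetilde a)$ these are exactly the similarity-invariant integrands $\xi_\text{invar}$ of Lemma \ref{lem:pi-weighted-sum}, and the composite integrand $f(a;\Pi\setminus a)\,\Expect{g(\widetilde Z_1;\Pi\setminus Z_1)\mid Z_0=a,\Pi}$ (with $g(\widetilde Z_1)=g(Z_1)$ by the symmetry constraint) is again similarity-invariant, so it enters Lemma \ref{lem:pi-weighted-sum} with $\xi_\text{speed}\equiv1$ and $\xi_\text{config}\equiv1$. That lemma factorizes the $\pi$-weighted sum into a product of (i) the configuration integral $\int\!\int\!\int 1\,\d\theta\,\d s\,\d r$, (ii) the speed integral $\int_0^\infty v^{\alpha-2\gamma+1}\,\d v$, and (iii) the informative factor $\int_{-\infty}^\infty\!\int_0^\pi \Expect{\xi_1(e^t,\phi,\Pi)}\,\min\{e^{(\alpha-(\gamma-1))t},e^{-(\gamma-1)t}\}\sin\phi\,\d\phi\,\d t$. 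Factor (iii) is, up to normalization, precisely integration against $\mu$: the weight $\min\{e^{(\alpha-(\gamma-1))t},e^{-(\gamma-1)t}\}$ equals $e^{-(\gamma-1)t}$ for $t\geq0$ and $e^{(\alpha-(\gamma-1))t}$ for $t\leq0$ (using $\alpha>\gamma-1$), which is exactly the asymmetric Laplace density of rates $\gamma-1$ and $\alpha-(\gamma-1)$ of item \ref{result:log-rel-speed}; the $\sin\phi$ supplies the sine-weighted angle; and the $\Pi$ inside the expectation gives $\Zenv^{(0)}\sim\Pi$.

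Since factors (i) and (ii) are common multiplicative constants -- the ``volume'' of the position, orientation, and scale directions that relativization quotients out -- they appear identically in $B(f,g)$ and in $B(g,f)$ and cancel, leaving $\langle \hat f, P_\Zenv\hat g\rangle_\mu=\langle \hat g, P_\Zenv\hat f\rangle_\mu$ for the reduced functions $\hat f,\hat g$. This is self-adjointness of $P_\Zenv$ in $L^2(\mu)$, hence reversibility with respect to $\mu$, from which stationarity follows. That $\mu$ normalizes to a probability measure is automatic, since $\int_{\Reals}\min\{e^{(\alpha-(\gamma-1))t},e^{-(\gamma-1)t}\}\,\d t=\tfrac{1}{\gamma-1}+\tfrac{1}{\alpha-(\gamma-1)}<\infty$ and $\int_0^\pi\sin\phi\,\d\phi=2$.

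The main obstacle is exactly the handling of the divergent factors (i) and (ii): both are infinite (indeed $\int_0^\infty v^{\alpha-2\gamma+1}\,\d v$ diverges for every admissible $\alpha$), reflecting the non-summability of $\pi$ and the reason $Z$ itself admits no stationary law. I would make the cancellation rigorous by replacing $\xi_\text{config}$ and $\xi_\text{speed}$ with indicators of a bounded window -- concretely the Palm-type conditioning flagged before the statement, requiring an intersection within distance $c$ of $\origin$ on a current line of speed exceeding $c$ -- so that (i) and (ii) become finite normalizing constants. Lemma \ref{lem:pi-weighted-sum} then shows that the induced relative-environment law under this conditioning equals $\mu$ for every $c$, while the symmetry of the now-finite windowed Dirichlet form gives reversibility of $P_\Zenv$ against $\mu$ on the windowed test functions; an $L^2$-closure argument extends this to all of $\mathfrak{H}$. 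A secondary point needing care is that $B$ determines $\Law{Z_1\mid Z_0,\Pi}$ (as noted after \eqref{eq:annealed-dirichlet}), so that self-adjointness of the reduced form genuinely transfers to reversibility of the chain $\Zenv$ rather than of some coarser object.
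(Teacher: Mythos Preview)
Your strategy is essentially the paper's: feed similarity-invariant test functions into the annealed Dirichlet form, apply Lemma~\ref{lem:pi-weighted-sum} to factor out the divergent position/orientation/scale integrals, window those to make them finite, and read off the asymmetric-Laplace/sine/$\Pi$ product law as the informative factor. The identification of $\mu$ from factor~(iii) is exactly right.

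There is, however, a genuine gap in how you pass from symmetry of $B$ to symmetry of $\langle\hat f,P_\Zenv\hat g\rangle_\mu$. With the window placed on $f$ only, Lemma~\ref{lem:pi-weighted-sum} applies to $\xi=f_W\,f_\text{invar}\cdot\Expect{g_\text{invar}(Z_1)\mid Z_0,\Pi}$ because the conditional expectation inherits similarity-invariance from $g_\text{invar}$; this yields $B(f_W f_\text{invar},g_\text{invar})=C(f_W)\,\langle\hat f,P_\Zenv\hat g\rangle_\mu$. But the symmetric partner $B(g_\text{invar},f_W f_\text{invar})$ does \emph{not} factor the same way: now the conditional expectation $\Expect{f_W(Z_1)\,f_\text{invar}(Z_1)\mid Z_0,\Pi}$ contains the window $f_W$, which is not similarity-invariant, so the integrand no longer decomposes as (invariant)$\times$(speed)$\times$(config) and Lemma~\ref{lem:pi-weighted-sum} cannot be invoked to extract $\langle\hat g,P_\Zenv\hat f\rangle_\mu$. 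Windowing both sides does not help either, for the same reason. So the abstract symmetry of $B$ does not transfer to the relative form by this route.

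The paper closes this gap with one further Slivnyak--Mecke application: having arrived at $B^\text{relative}(f_\text{invar},g_\text{invar})$, it disintegrates the sum over the \emph{next} line $\Line_+\in\Pi$ as well (using the $s$--$\phi$ parametrization \eqref{eq:plp-sirsn-intensity-alt} along $\Line_*^0$). After this third Slivnyak--Mecke step the form becomes a five-fold integral in $(t_-,\phi_-,t_+,\phi_+,s_+)$ with integrand $\Expect{f_1(e^{t_-},\phi_-,\Pi)\,g_1(e^{t_+},\phi_+,\Pi)\,\omega^\Pi_{[\origin,s_+]}}$ against weights that are \emph{manifestly} symmetric under $(t_-,\phi_-)\leftrightarrow(t_+,\phi_+)$. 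Symmetry of $B^\text{relative}$, and hence stationarity of $\mu$, is then read off directly from the formula rather than inferred from symmetry of $B$. Adding this step to your argument would complete it.
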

\begin{proof}
Because \(B(f,g)\) is given by
an expression involving the Markovian kernel
\(\pi_a\omega_{a,\widetilde{b}}\),
\[
 B(f,g) 
 \quad=\quad
  \Expect{
 \underset{a\neq b\in\StateSpace}{\sum\sum}
 \pi_a\omega_{a,\widetilde{b}} s_{\widetilde{b}} 
 \;  f(a;\Pi)g(b;\Pi)
 }\,,
\]
it possesses a completion which applies to
the case when \(g\) is bounded
and \(f\)  
satisfies the \(L^1\) condition
\[
  \Expect{
 \underset{a\in\StateSpace}{\sum}  
|f(a;\Pi)| \; \pi_a }<\infty\,.
\]
Suppose \(f\) is  non-negative and admits a factorization
as in Lemma \ref{lem:pi-weighted-sum};
 \begin{multline*}
  f((\Line_-,\Line_0);\Pi)\quad=\quad
    f_\text{invar}((\Line_-,\Line_0); \Pi\setminus{\Line_-,\Line_0})
  \;\times\;
  \\
  \;\times\;
  f_\text{speed}(v(\Line_0))
  \;\times\;
  f_\text{config}(\sdist(\Line_-\cap\Line_{0},\Line_{x,0}\cap\Line_0), 
  \sdist(\Line_0,\origin), \ArcAngle(\Line_0,{\Line_{*}^0})))
  \,,
 \end{multline*}
where \(f_\text{invar}\) is similarity-invariant.
In particular, if \(\phi_-=\ArcAngle(\Line_-,{\Line_0})\)
and
\(v(\Line_-)=e^{t_-} v(\Line_0)\)
then we write
\begin{multline*}
 f_\text{invar}((\Line_-,\Line_0); \Pi\setminus\{\Line_-,\Line_0\})
 \;=\;
 f_\text{invar}\left(\frac{v(\Line_-)}{v(\Line_0)}, 
 \ArcAngle(\Line_-,{\Line_0}),
 \Similarity_{(\Line_-,\Line_0)}^{-1}(\Pi\setminus\{\Line_-,\Line_0\})\right)
 \\
 \quad=\quad
 f_1(e^{t_-}, \phi_-, \Zenv^{(0)}_0)
 \,.
\end{multline*}
Suppose further that the bounded \(g\) is 
itself similarity-invariant: we write
\[
 g(\widetilde{b};\Pi)\quad=\quad
  g((\Line_+,\Line_0);\Pi)\quad=\quad
 g_\text{invar}((\Line_+,\Line_0); \Pi\setminus\{\Line_0,\Line_+\})
 \,.
\]
Since the dynamics of \(Z\) are similarity-invariant, this means that
\[
 \Expect{g(\widetilde{Z}_1;\Pi) | Z_0=a, \Pi}
 \quad=\quad
 \Expect{g(\Similarity\widetilde{Z}_1;\Similarity\Pi) | \Similarity Z_0=a, \Similarity \Pi}
\]
for any similarity \(\Similarity\), so \(\Expect{g(\widetilde{Z}_1;\Pi) | Z_0=a, \Pi}\)
is similarity-invariant
as a function of \(a\) and \(\Pi\setminus a\).
Consequently we may apply Lemma \ref{lem:pi-weighted-sum} to
\begin{multline*}
\xi((\Line_-,\Line_0);\Pi)
\quad=\quad
 f((\Line_-,\Line_0);\Pi)\Expect{g(\widetilde{Z}_1;\Pi) | Z_0=(\Line_-,\Line_0), \Pi}
 \quad=\quad
 \\
 f_\text{invar}((\Line_-,\Line_0); \Pi\setminus\{\Line_0,\Line_-\})\times
 \Expect{
 g_\text{invar}(\widetilde{Z}_1; \Pi\setminus\{\Line_0,\Line_+\})
 \;|\;
 Z_0=(\Line_-,\Line_0), \Pi
 }\times
 \\
 \times
 f_\text{speed}(v(\Line_0))
  \;\times\;
  f_\text{config}(\sdist(\Line_-\cap\Line_{0},\Line_{x,0}\cap\Line_0), 
  \sdist(\Line_0,\origin), \ArcAngle(\Line_0,{\Line_{*}^0})))
  \,.
\end{multline*}
We deduce
\begin{multline*}
B(f,g)\quad=\quad
\Expect{\underset{\Line_-\neq\Line_0\in\Pi}{\sum\sum}
 \pi_{(\Line_-,\Line_0)} f((\Line_-,\Line_0);\Pi)
 \Expect{g(\widetilde{Z}_1;\Pi) | Z_0=(\Line_-,\Line_0), \Pi}}
 \\
 \quad=\quad
  \left(\frac{\gamma-1}{2}\right)^2
  \int_{-\infty}^\infty\int_{-\infty}^\infty\int_0^\pi
  f_\text{config}(s,r,\theta)
  {\d}\theta {\d}s {\d}r
  \;\times\;
  \int_0^\infty f_\text{speed}(v) v^{\alpha-2\gamma+1}{\d}{v}
  \;\times\;
  \\
  \int_{-\infty}^\infty\int_0^\pi
  \min\{e^{(\alpha - (\gamma-1)) t_-},e^{-(\gamma-1)t_-}\}
  \times
  \\
  \times
  \Expect{
  f_\text{invar}((\Line_*^{\phi_-}(e^{t_-}),\Line_*^0);\Pi)
  \Expect{g_\text{invar}(\widetilde{Z}_1;\Pi) | Z_0=(\Line_*^{\phi_-}(e^{t_-}),\Line_*^0), \Pi}
  }
    \sin\phi_-{\d}{\phi_-}\,{\d}{t_-}
\,,
\end{multline*}
where the line \(\Line_*^{\phi_-}(e^{t_-})\) meets 
the unit-speed (\(x\)-axis) \(\Line_*^0\)
at \(\origin\), making an angle \(\phi_-\), 
and has speed \(e^{t_-}\).

We may deduce the following by choosing 
\(f_\text{config}(s,r,\theta)=\Indicator{s^2+r^2\leq c^2}\) 
and \(f_\text{speed}(v)=\Indicator{v>c}\) for fixed \(c>0\).
Sample the speed-marked line process \(\Pi\),
and sample \(\Line_0\) uniformly at random from the set of lines of \(\Pi\) lying within \(c\) of \(\origin\)
and with speed exceeding \(c\).
Then sample \(\Line_-\) from the lines of \(\Pi\) intersecting \(\Line_0\) and such that (i) the intersection point is within 
\(c\) of \(\origin\), 
using sampling weights \(\min\{1,(v(\Line_1)/v(\Line_0))^\alpha\}\)
If there is no such line \(\Line_0\),
or there turn out to be no such intersections, then re-sample \(\Pi\) and repeat till successful.
Use the resulting \((\Line_-,\Line_0)\) as the initial point of the \SIRSNRRF \(Z\).
Then the resulting relative environment process is associated with the following 
Dirichlet form:
\begin{multline*}
 B^\text{relative}(f_\text{invar},g_\text{invar})
 \;=\;
 \Expect{
  \min\left\{1, \left(\frac{v(\Line_-)}{v(\Line_0)}\right)^{\alpha}
  \right\}
  f_\text{invar}((\Line_-,\Line_0);\Pi)
  \Expect{g_\text{invar}(\widetilde{Z}_1;\Pi) | Z_0=(\Line_-,\Line_0), \Pi}
  }
  \\
  \;=\;
  \int_{-\infty}^\infty  \int_0^\pi
  \operatorname{\mathbb{E}}
  \Bigg[f_1(e^{t_-},\phi_-,\Pi) 
  \sum_{\Line_+\in\Pi}
  \omega^{\Pi\setminus\{\Line_+\}}_{\origin, (\Line_*^0\cap\Line_+)}
  \min\{1,v(\Line_+)^\alpha\}
  \times
  \\
  \times
  g_1(v(\Line_+),\ArcAngle(\Line_*^0,\Line_+),\Pi) 
  \Bigg]
  \min\{e^{(\alpha - (\gamma-1))t_-}, e^{-(\gamma-1)t_-}\}
  \sin\phi_- {\d}{\phi_-}
  \,
  {\d}{t_-}
  \,.
\end{multline*}
Here
\[
 \omega^{\Pi\setminus\{\Line_+\}}_{\origin, (\Line_*^0\cap\Line_+)}
 \quad=\quad
  \mathop{\underset{\Line\in\Pi\setminus\{\Line_+\}\text{ separating}}{\prod}}_{\origin \text{ and } (\Line_*^0\cap\Line_+)}
\Big(1-s_{\Line_0^*,\Line}\Big)\,.
\]

One further application of the Slivnyak-Mecke theorem
(Theorem \ref{thm:Slivnyak-Mecke}),
using the representation 
\eqref{eq:plp-sirsn-intensity-alt}
for \(\nu({\d}{\Line_+})\),
now yields
\begin{multline}
 B^\text{relative}(f_\text{invar},g_\text{invar})
 \quad=\quad
 \\
 \int_{-\infty}^\infty
 \int_{-\infty}^\infty
   \int_0^\pi
   \;
   \int_{-\infty}^\infty  \int_0^\pi
  \operatorname{\mathbb{E}}
  \Bigg[f_1(e^{t_-},\phi_-,\Pi) 
  \omega^{\Pi}_{[\origin, s]}
  \min\{1,e^{\alpha t_+}\}
  g_1(e^{t_+},\phi_+,\Pi) 
  \Bigg]
  \times
  \\
  \times
  \min\{e^{(\alpha - (\gamma-1))t_-}, e^{-(\gamma-1)t_-}\}
  \sin\phi_- {\d}{\phi_-}
  \,
  {\d}{t_-}
  \;
  e^{-(\gamma-1) t_+}\sin\phi_+ {\d}{\phi_+}
  \,
  {\d}{t_+}
  {\d}{s_+}
  \\
  \quad=\quad
 \int_{-\infty}^\infty
 \int_{-\infty}^\infty
   \int_0^\pi
   \;
   \int_{-\infty}^\infty  \int_0^\pi
\Expect{f_1(e^{t_-},\phi_-,\Pi) g_1(e^{t_+},\phi_+,\Pi)
\;
\omega^\Pi_{[\origin,s}}
  \times
  \\
  \times
  \min\{e^{(\alpha - (\gamma-1))t_-}, e^{-(\gamma-1)t_-}\}
  \min\{e^{(\alpha - (\gamma-1))t_+}, e^{-(\gamma-1)t_+}\}
  \times
  \\
  \times
  \sin\phi_- {\d}{\phi_-}
  \sin\phi_+ {\d}{\phi_+}
  \,
  {\d}{t_-}
  \,
  {\d}{t_+}
  {\d}{s_+}
  \,,
\end{multline}
where \([\origin,s]\) is short-hand for the interval 
along \(\Line_*^0\) with one endpoint given by \(\origin\)
and with the signed length \(s\).

The invariance of \(f_\text{invar}\), \(g_\text{invar}\), and \(f_\text{invar}\) collectively imply that the Dirichlet form \(B^\text{relative}(f_\text{invar},g_\text{invar})\) is symmetric.
This in turn implies that
the relative environment
process
\(\Zenv\) is stationary,
with an invariant measure which is a probability 
measure
which makes
the three coordinates of log relative speed \(t\)
of previous line, angle \(\phi\) between previous and current lines,
and reduced environment \(\Zenv^{(0)}\) independent with joint distribution as follows
\begin{enumerate}
 \item corresponding to the log relative speed \(t\),
 a (possibly asymmetric) Laplacian density
 over \(\Reals\) 
 \citep[ch.~24]{JohnsonKotzBalakrishnan-1995},
 given by 
  \begin{equation}\label{eq:equilibrium-density-reversed}
   f_{\alpha, \gamma}(y)\quad=\quad 
   \begin{cases}
  \frac{(\gamma-1)(\alpha-(\gamma-1))}{\alpha} 
  e^{-(\gamma-1)|y|}              & \text{ when } y\geq 0\,,
    \\
  \frac{(\gamma-1)(\alpha-(\gamma-1))}{\alpha} 
     e^{-(\alpha-(\gamma-1))|y|} & \text{ when } y< 0\,;
   \end{cases}
  \end{equation}

%
%

 \item corresponding to \(\phi\),
 a half-sine density over \([0,\pi)\), given by \(\tfrac12\sin\phi\);
 \item corresponding to the reduced relative
 environment \(\Zenv^{(0)}\),
 a distribution which agrees with that of the
 underlying
 \SIRSN
 candidate \(\Pi\).
\end{enumerate}
This completes the proof.
\end{proof}

We are actually interested in the log-relative speed of the current line with respect to the previous line. 
The distribution of this in stationary state
is readily computed directly from Theorem \ref{thm:stationarity},
bearing in mind that if \(Z\) is at \((\Line_-, \Line_0)\)
then its next state is \((\Line_0,\Line_+)\), where 
\(\Line_+\) is drawn from the 
reduced relative environment \(\Zenv^{(0)}\),
such that \((\Line_0,\Line_+)\) is the first intersection
along \(\Line_0\) is the chosen direction
which is accepted by the rule summarized by the acceptance
probability \eqref{eq:switch}.
We obtain
\begin{cor}\label{cor:log-rel-speed}
 In the situation of Theorem \ref{thm:stationarity},
 when \(\Zenv\) is stationary, 
 the distribution 
 of the log of the speed of the current line
 relative to the speed of the previous line has density
 given by
 the (possibly asymmetric) Laplacian density
 prescribed by \eqref{eq:equilibrium-density-reversed},
  with mean value given by
   \begin{equation}\label{eq:equilibrium-mean}
  \int_{-\infty}^\infty y f_{\alpha, \gamma}(y) {\d}{y}
  \quad=\quad
   \frac{1}{\gamma-1} - \frac{1}{\alpha-(\gamma-1)} 
  \quad=\quad
  \frac{\alpha - 2(\gamma-1)}{(\gamma-1)(\alpha-(\gamma-1))}\,.
 \end{equation}
\end{cor}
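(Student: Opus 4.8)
The plan is to read the density off Theorem~\ref{thm:stationarity} and then reduce the mean to an elementary integral. The random variable in question is $y=\log\bigl(v(\Line_+)/v(\Line_0)\bigr)$, the log-speed of the line switched onto relative to the line switched from, recorded as the stationary $\Zenv$ makes one transition from $(\Line_-,\Line_0)$ to $(\Line_0,\Line_+)$. By Theorem~\ref{thm:stationarity} the reduced relative environment $\Zenv^{(0)}$ is a fresh copy of $\Pi$ and the current line $\Line_0$ is the unit-speed $x$-axis, while $\Line_+$ is the first line of $\Zenv^{(0)}$ crossing $\Line_0$ (in the equiprobably chosen direction) at which scattering is accepted, the acceptance probability being $\min\{1,v(\Line_+)^\alpha\}$ from \eqref{eq:switch}. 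So the law of $y$ is determined entirely by the accepted-crossing mechanism applied to a fresh $\Pi$.

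First I would compute the law of $v(\Line_+)$. Using the parametrization \eqref{eq:plp-sirsn-intensity-alt} and the Slivnyak-Mecke theorem (Theorem~\ref{thm:Slivnyak-Mecke}) applied to the crossings of $\Line_0$, the accepted crossings form a thinned Poisson process whose speed--angle intensity is $\propto v^{-\gamma}\min\{1,v^\alpha\}\sin\phi$; integrating out the angle, the speed of the first accepted crossing has density $\propto v^{-\gamma}\min\{1,v^\alpha\}$. Passing to $y=\log v$ (Jacobian $e^{y}$) turns this into a density $\propto e^{-(\gamma-1)y}$ for $y\ge 0$ and $\propto e^{-(\alpha-(\gamma-1))|y|}$ for $y<0$: exactly the asymmetric Laplacian $f_{\alpha,\gamma}$ of \eqref{eq:equilibrium-density-reversed}. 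Normalizing, $\int_0^\infty e^{-(\gamma-1)y}\,\d y+\int_{-\infty}^0 e^{(\alpha-(\gamma-1))y}\,\d y=\tfrac{1}{\gamma-1}+\tfrac{1}{\alpha-(\gamma-1)}=\tfrac{\alpha}{(\gamma-1)(\alpha-(\gamma-1))}$, so the normalizing constant is $\tfrac{(\gamma-1)(\alpha-(\gamma-1))}{\alpha}$, finite precisely because $\alpha>\gamma-1$.

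With the density established, I would compute the mean by splitting $\int_{-\infty}^\infty y\,f_{\alpha,\gamma}(y)\,\d y$ at the origin and using $\int_0^\infty y\,e^{-\lambda y}\,\d y=\lambda^{-2}$ on each exponential tail; this gives $\tfrac{(\gamma-1)(\alpha-(\gamma-1))}{\alpha}\bigl(\tfrac{1}{(\gamma-1)^2}-\tfrac{1}{(\alpha-(\gamma-1))^2}\bigr)$, and the difference-of-squares identity $(\alpha-(\gamma-1))^2-(\gamma-1)^2=\alpha\,(\alpha-2(\gamma-1))$ collapses the prefactor to $\tfrac{\alpha-2(\gamma-1)}{(\gamma-1)(\alpha-(\gamma-1))}=\tfrac{1}{\gamma-1}-\tfrac{1}{\alpha-(\gamma-1)}$, as required. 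I expect the only genuine obstacle to be orientation: one must check that the rate $\gamma-1$ is paired with positive $y$ (faster new line) and $\alpha-(\gamma-1)$ with negative $y$, rather than the reflection, since the two assignments give opposite signs for the mean. The thinning computation settles this cleanly -- faster lines are accepted with probability $1$, leaving the bare crossing rate $v^{-\gamma}$, whereas slower lines carry the extra factor $v^{\alpha}$ -- so no separate argument is needed, and everything else is routine bookkeeping.
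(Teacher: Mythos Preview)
Your proposal is correct and follows essentially the same approach as the paper: both argue that, under the stationary relative environment, the crossings of the unit-speed current line by the fresh copy of \(\Pi\) form a speed-marked Poisson process with intensity \(v^{-\gamma}\,{\d}v\,{\d}s\), which after thinning by the acceptance probability \(\min\{1,v^\alpha\}\) yields the mark (hence relative-speed) density \eqref{eq:equilibrium-density-reversed}. The paper leaves the normalization and mean as immediate, whereas you spell out the elementary integrals and the orientation check; these additions are sound but do not constitute a different route.
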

\begin{proof}
 Leaving \((\Line_-, \Line_0)\),
 \(Z\) encounters intersections with lines of \(\Zenv^{(0)}\)
 according to a speed-marked Poisson 
 process with intensity measure \(v^{-\gamma}{\d}v{\d}s\),
 where \(v\) is now the speed of the new line relative to 
 the speed of \(\Line_0\) and \(s\) is the scaled distance
 along \(\Line_0\) from \((\Line_-, \Line_0)\).
 Note that the mark measure does not have finite mass.
 We have to thin this marked Poisson process
 with retention probability \(\min\{1,v^\alpha\}\),
 according to the acceptance probability \eqref{eq:switch}
 (bearing in mind that \(v\) is the relative speed of the new line),
 so the mark distribution of the retained lines does
 have finite mass.
 Accordingly the mark distribution of retained lines,
 and thus the relative speed of the new line, is proportional
 to (hence equal to) \eqref{eq:equilibrium-density-reversed}.
\end{proof}

%
%
%

In particular the log-relative-speed density has zero mean
in the symmetric case,
when \(\alpha = 2(\gamma-1)\), 
in which case the log-relative-speed stationary distribution 
is is symmetric and is
given by the Laplace or double-headed exponential distribution, 
with rate parameter \(\gamma-1\), and density
  \begin{equation}\label{eq:critical-equilibrium-density}
   f_{2(\gamma-1), \gamma}(y)\quad=\quad 
  \frac{\gamma-1}{2} 
  e^{-(\gamma-1)|y|} \,.
\end{equation}

Finally note that the reduced relative environment process, and hence the relative environment process, is very far from being irreducible. 
Indeed, any particular realization of the state
\(\Zenv\) of the reduced environment process
defines a countable set of 
intersection angles \(A=\{\ArcAngle(\Line_1, \Line_2)):
(\Line_1,\Line_2)\in(\Zenv\times\Zenv)\setminus\Delta\}\)
\emph{which remains time-constant under the evolution of 
\(\Zenv\)}.
But \(\Zenv\) has equilibrium distribution given by the \SIRSN candidate \(\Pi\), and
there will be zero probability of \emph{any}
of the countably many intersections
of lines from \(\Pi\) having an angle belonging to a fixed countable set.
Indeed, by the Slivnyak-Mecke theorem
(Theorem \ref{thm:Slivnyak-Mecke}) we know
that for any fixed angle \(\phi\) we must have
\[
 \Expect{\sum_{(\Line_1,\Line_2)\in\StateSpace}
 \Indicator{\ArcAngle(\Line_1,\Line_2)=\phi}
 }
 \quad=\quad
 \int\int\Indicator{\ArcAngle(\Line_1,\Line_2)=\phi}
 \nu({\d}{\Line_1})\nu({\d}{\Line_2})
 \quad=\quad0\,.
\]
Thus conventional Markovian arguments cannot be applied. However, as we will see in the next section, ergodic 
theory allows us to prove the results we need.

 \section[Long-term behaviour of SIRSN-RRF speed]{Long-term behaviour of \SIRSNRRF speed}\label{sec:ergodic}

If \(Z=(Z_0,Z_1, \ldots)\) is the (discrete-time)
\SIRSNRRF 
and if \(V=(V_0,V_1, \ldots)\) yields the 
corresponding sequence of
speeds for the current line,
then the relative speed change \(V_n/V_{n-1}\) can be 
determined using only \(\Zenv_n\) 
the relative
environment (relativized
by centering, rotating, and scaling).
Theorem \ref{thm:stationarity}
implies that the log-relative speed-changes
\(U_n=\log(V_n/V_{n-1})\) form a stationary sequence, if the
initial relative environment is given the stationary distribution
discussed in the previous section,
and thus \(U_0\)
is given the equilibrium density specified in 
Equation \eqref{eq:equilibrium-density-reversed}.
We may therefore apply 
the non-ergodic part of
Birkhoff's ergodic theorem 
(see for example \citealp[Theorem 10.6]{Kallenberg-2010}) to show
\begin{equation}\label{eq:long-run-equation}
 \frac{1}{n}\log(V_n/V_0) \quad=\quad
 \frac{1}{n}\sum_{m=1}^n U_m \quad\to\quad
 \Expect{U_0 | \mathcal{I}}\,,
\end{equation}
where \(\mathcal{I}\) is the \(\sigma\)-algebra of shift-invariant events for the random process \(U\),
and convergence is both almost sure and in \(L^1\).

It immediately follows that,
away from the critical case \(\alpha=2(\gamma-1)\),
the speed \(V_n\) has at least a positive chance of 
either diverging to \(+\infty\) exponentially fast as 
\(n\to\infty\)
(if \(\alpha > 2(\gamma-1)\))
or converging to \(0\) exponentially fast as 
\(n\to\infty\) 
(if \(\alpha < 2(\gamma-1)\)).
We may therefore rule out non-critical cases (\(\alpha\neq2(\gamma-1)\))
in our search for an example which is speed-neighbourhood-recurrent.

Suppose it can be shown that  
\(\Zenv\) (and therefore \(U\)) is ergodic, so that we can replace
\(\Expect{U_0 | \mathcal{I}}\) by \(\Expect{U_0}\) in \eqref{eq:long-run-equation}.
In the non-critical cases discussed above, this means 
we can replace ``has at least a positive chance'' by ``will almost surely end up``.
But in the critical case \(\Expect{U_0}=0\)
is not sufficient in itself
to guarantee neighborhood-recurrence for \(U\).
However 
in the ergodic case
neighborhood-recurrence actually follows rather simply from the celebrated
(but unpublished)
Kesten, Spitzer and Whitman 
range theorem (described by \citealp[page 38]{Spitzer-1964}).
In its original form the range theorem implies concerns recurrence for integer-valued 
stationary ergodic processes of zero mean. The real-valued / neighbourhood
recurrent case is a simple variation on the original idea:
 \begin{thm}{(Kesten-Spitzer-Whitman, real-valued case.)}
 \label{thm:KestenSpitzerWhitman}
  Suppose \(U_1\), \dots, \(U_n\), \ldots form a stationary ergodic sequence,
  with \(\Expect{U_1}=0\).
  Set \(W_n=U_1+\ldots+U_n\).
  Then for all \(\eps>0\) it is the case that
  \[
   \Prob{|W_n-W_0| \leq\eps \text{ infinitely often in }n}\quad=\quad1\,.
  \]
 \end{thm}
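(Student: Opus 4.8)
The plan is to adapt the classical range-theorem argument to the continuous setting by discretizing the state space into $\eps$-blocks. First I would define the range up to time $n$ at resolution $\eps$: let $R_n^\eps$ count the number of distinct half-open intervals of the form $[k\eps,(k+1)\eps)$, $k\in\mathbb{Z}$, that are visited by the partial-sum walk $W_0,W_1,\ldots,W_n$. The key observation is that if a new interval is entered at each of ``too many'' times, then the walk cannot be returning near its current position. More precisely, I would relate $R_n^\eps$ to the number of $\eps$-scale returns, and show that $\frac1n R_n^\eps \to 0$ almost surely forces infinitely many returns to within $\eps$ of the starting level.

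The core steps, in order, are as follows. Let $N_n$ denote the number of indices $1\le m\le n$ such that $W_m$ lands in a block not previously visited by $W_0,\ldots,W_{m-1}$; then $R_n^\eps = N_n + O(1)$. Writing $A_m$ for the event that $W_m,W_{m+1},W_{m+2},\ldots$ never again return to within $\eps$ of $W_m$ (in the sense that the block containing $W_m$ is never revisited after time $m$), stationarity gives $\Prob{A_m}=\Prob{A_0}=:q$, and a Fatou / counting argument shows
\[
 \Expect{N_n} \quad\geq\quad \sum_{m=1}^{n} \Prob{A_m} \quad=\quad n\,q\,,
\]
since each $m$ at which the walk never returns to its current block contributes a fresh block. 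Hence $\Expect{R_n^\eps}/n \geq q$ for all $n$. On the other hand, ergodicity of $U$ together with $\Expect{U_1}=0$ and Birkhoff's theorem give $W_n/n\to0$ almost surely, so the walk is confined to $o(n)$ blocks: $R_n^\eps = o(n)$ almost surely, and (after a uniform-integrability or bounded-ratio check, using $R_n^\eps\le n+1$) also in $L^1$, so $\Expect{R_n^\eps}/n\to0$. Combining the two bounds forces $q=0$; that is, almost surely every block that is visited is revisited, which is precisely neighbourhood recurrence at scale $\eps$. Taking a countable sequence $\eps\downarrow0$ and intersecting the resulting almost-sure events completes the proof.

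The main obstacle I anticipate is the passage $W_n/n\to0 \implies \Expect{R_n^\eps}/n\to0$: the almost-sure statement about the walk staying in $o(n)$ blocks is immediate, but upgrading to convergence of the \emph{expectation} $\Expect{R_n^\eps}/n$ requires justifying the interchange of limit and expectation. This is handled by the uniform bound $0\le R_n^\eps/n \le 1 + 1/n$, which gives the required domination and hence $L^1$ convergence by bounded convergence. A secondary subtlety is the careful definition of ``return to a block'' so that the stationarity shift aligns the events $A_m$ correctly — one must phrase $A_m$ as a shift of $A_0$ in the underlying shift-invariant dynamical system driving $U$, which is legitimate because $W_{m+j}-W_m = U_{m+1}+\cdots+U_{m+j}$ depends only on the shifted sequence. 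Everything else reduces to the standard discrete Kesten-Spitzer-Whitman bookkeeping.
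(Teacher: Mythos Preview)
Your overall strategy is the classical range argument and is sound in outline, but there is a genuine gap at exactly the point you flag as a ``secondary subtlety'': the block-based events $A_m=[\text{block containing }W_m\text{ never revisited after }m]$ are \emph{not} shifts of $A_0$. Whether $W_{m+j}$ lies in the same $\eps$-block as $W_m$ depends not only on the increment $W_{m+j}-W_m$ but also on the position $W_m\bmod\eps$ of $W_m$ within its block, and that residue is a function of $U_1,\ldots,U_m$, not of the shifted sequence $U_{m+1},U_{m+2},\ldots$. So the claimed identity $\Prob{A_m}=\Prob{A_0}=q$ fails, and with it the bound $\Expect{R_n^\eps}/n\geq q$.

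The fix is to replace the fixed-grid block event by the genuinely shift-invariant event $B_m=[\,|W_j-W_m|>\eps\text{ for all }j>m\,]$; this depends only on the increments and so $\Prob{B_m}=\Prob{B_0}$ by stationarity. Once you do this, the range $R_n^\eps$ is no longer the natural object; instead, observe that on $B_{m}\cap B_{m'}$ with $m<m'$ the points $W_m,W_{m'}$ are $\eps$-separated, so a direct packing bound gives $\sum_{m\leq n}\mathbb{1}[B_m]\leq \eps^{-1}\cdot 2\sup_{k\leq n}|W_k|+1$, and Birkhoff applied to the stationary ergodic sequence $\mathbb{1}[B_m]$ together with $W_n/n\to0$ forces $\Prob{B_0}=0$. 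This is precisely the paper's route. Note also that after switching to $B_m$ you lose the iterated-block argument for ``infinitely often'' (returning within $\eps$ of $W_0$ does not put you back at $W_0$), so you need a separate step for that; the paper handles it by rerunning the argument on the sub-sampled process $(W_0,W_m,W_{2m},\ldots)$ for each $m\geq1$.
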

 \begin{proof}
  Birkhoff's ergodic theorem guarantees that \(W_n/n\to0\) 
  almost surely, hence 
  \begin{equation}\label{eq:zero-average}
  n^{-1}\sup\{|W_1|,\ldots,|W_n|\}\quad\to\quad0\qquad\text{ almost surely.}
  \end{equation}
  Set \(A_n=[|W_m-W_n|>\eps\text{ for all }m> n]\). 

  Applying Birkhoff's ergodic theorem again,
  \begin{equation}\label{eq:b-e-t}
  n^{-1}(\Indicator{A_1}+\ldots+\Indicator{A_n})\quad\to\quad \Prob{A_0}\,.
  \end{equation}

  If \(m\neq n\) then
  \(|W_m-W_n|>\eps\) on \(A_n\cap A_m\), 
  and therefore a simple packing argument
  shows that 
  \begin{equation}\label{eq:packing}
  n^{-1}(\Indicator{A_1}+\ldots+\Indicator{A_n})
  \quad\leq\quad
  (n\eps)^{-1}\sup\{|W_1|,\ldots,|W_n|\}\,.
  \end{equation}
  Letting \(n\to\infty\) in \eqref{eq:packing} and using \eqref{eq:zero-average},
  we can deduce from \eqref{eq:b-e-t} that \(\Prob{A_0}=0\).
  Consequently 
  \[
\Prob{|W_n-W_0| \leq\eps \text{ at least once for }n>0}\quad=\quad1\,.
  \]
The same argument applies for the sub-sampled process \((W_0, W_m, W_{2m}, \ldots)\),
for any sub-sampling gap \(m>0\). Therefore the event
\(\bigcap_m \bigcup_{n\geq m} [|W_n-W_0|\leq\eps]\) happens
almost surely.
Consequently it is almost sure that \(W_n\) returns to within \(\eps\) of
\(W_0\) for infinitely many \(n\) and the result follows.
 \end{proof}

 Accordingly speed-neighborhood-recurrence is established in the critical
 case \(\alpha=2(\gamma-1)\) if we can show that the reduced environment
 process \(\Zenv\) is ergodic.
 This is the main result of this paper:
\begin{thm}\label{thm:ergodic-l-r-s}
 Given a \SIRSNRRF \(Z\) on a Poisson line \SIRSN
 or \SIRSN candidate \(\Pi\),
 parametrized by \(\alpha>\gamma-1\) where \(\gamma\) is the 
Poisson line \SIRSN parameter,
the relative  environment process \(\Zenv\) for the 
\SIRSNRRF \(Z\)  is ergodic.
\end{thm}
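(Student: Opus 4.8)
The plan is to adapt the ``environment viewed from the particle'' argument of \cite{Kozlov-1985} to the present similarity-group setting, showing that every bounded function \(f\) invariant under the transition operator \(P\) of \(\Zenv\) is almost surely constant with respect to the stationary probability \(Q\) furnished by Theorem \ref{thm:stationarity}. The three structural inputs are reversibility of \(\Zenv\) (from the symmetry of the reduced Dirichlet form computed in the proof of Theorem \ref{thm:stationarity}), quenched irreducibility of the \SIRSNRRF (Lemma \ref{lem:irreducible}), and the translation-ergodicity of \(\Pi\) underlying Theorem \ref{thm:ergodic}. None of these inputs uses criticality, so the argument will indeed cover all \(\alpha>\gamma-1\).

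First I would pass from \(P\)-invariance to single-step invariance. Given bounded \(f\) with \(Pf=f\) in \(L^2(Q)\), reversibility yields \(0=\langle f,(I-P)f\rangle_Q=\tfrac12\,\Expect{(f(\Zenv_1)-f(\Zenv_0))^2}\) under the stationary dynamics, whence
\[
 f(\Zenv_0)\quad=\quad f(\Zenv_1)\qquad Q\text{-almost surely.}
\]
Writing \(\Zenv_n=\Similarity_{Z_n}^{-1}\Pi\) and \(g(a;\Pi)=f(\Similarity_a^{-1}\Pi)\), the equivariance \(\Similarity_{\Similarity a}=\Similarity\circ\Similarity_a\) of the standardising similarities (immediate from the uniqueness clause defining \(\Similarity_a\)) shows that \(g\) is similarity-invariant, \(g(\Similarity a;\Similarity\Pi)=g(a;\Pi)\); and the displayed identity becomes \(g(Z_0;\Pi)=g(Z_1;\Pi)\) almost surely.

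Next I would quench and use irreducibility. Disintegrating over \(\Pi\) and using positivity of the quenched weights \(\pi^\Pi_a=\min\{v(\Line_-)^\alpha,v(\Line_0)^\alpha\}>0\), the identity \(g(Z_0;\Pi)=g(Z_1;\Pi)\) forces, for almost every \(\Pi\), that \(g(a;\Pi)=g(b;\Pi)\) across every one-step transition \(a\to b\) of positive quenched probability. Chaining these equalities along the connecting paths supplied by the irreducibility of the quenched \SIRSNRRF (Lemma \ref{lem:irreducible}) shows that \(a\mapsto g(a;\Pi)\) is constant for almost every \(\Pi\); write \(g(\cdot;\Pi)\equiv h(\Pi)\).

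Finally I would invoke ergodicity of \(\Pi\). Since \(g\) is similarity-invariant and does not depend on the state, \(h(\Similarity\Pi)=g(\Similarity a;\Similarity\Pi)=g(a;\Pi)=h(\Pi)\) for every proper similarity \(\Similarity\), so in particular \(h\) is invariant under translations in any fixed direction. The Hewitt--Savage-type argument in the proof of Theorem \ref{thm:ergodic} (where invariance under a single direction of translation already forces a.s.\ constancy) then makes \(h\) almost surely constant, hence \(f\) is \(Q\)-almost surely constant and \(\Zenv\) is ergodic. I expect the genuine difficulty to lie not in this skeleton but in two measure-theoretic passages it conceals: justifying the pointwise equality \(f(\Zenv_0)=f(\Zenv_1)\) from a vanishing Dirichlet form in the only-\(\sigma\)-finite quenched setting (the invariant measure \(\pi\) is never summable, as stressed in Section \ref{sec:environment}), and upgrading ``\(h(\Similarity\Pi)=h(\Pi)\) for a.e.\ \(\Pi\), each fixed \(\Similarity\)'' to invariance holding simultaneously along a translation subgroup -- a Fubini-and-continuity step that must be carried out despite \(\Zenv\) being very far from irreducible as a chain on configurations (as noted at the close of Section \ref{sec:environment}).
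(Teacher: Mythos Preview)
Your proposal is correct and follows essentially the same Kozlov-type argument as the paper: a bounded \(P\)-invariant (harmonic) function is shown to be one-step invariant via the vanishing of \(\Expect{(h(\Zenv_0)-h(\Zenv_1))^2}\), then quenched irreducibility makes it a function \(H(\Pi)\) of the environment alone, and finally similarity-invariance together with the translation-ergodicity underlying Theorem~\ref{thm:ergodic} forces \(H\) to be constant. Your account is in fact more explicit than the paper's (which compresses the irreducibility and ergodicity steps into two sentences), and you are right to flag the measure-theoretic passages as the places where care is needed; the paper does not spell these out either.
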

 
\begin{proof}
The key part of the argument is a
variation on an argument of \citet[Lemma 1, page 82]{Kozlov-1985}.

Firstly, consider the process \(\Zenv\) of the
relative 
environment viewed from the particle. 
Let \(h\) be a bounded harmonic function on the
relative 
environment state-space (harmonic with respect to the
process \(\Zenv\)). 
It suffices to show that
\(h(\Zenv_n)\) is almost surely constant in (discrete) time \(n\). 

Now consider
\[
 \Expect{(h(\Zenv_0)-h(\Zenv_1))^2} \quad=\quad 2 \,\Expect{h(\Zenv_0)^2} - 2 \,\Expect{h(\Zenv_0)h(\Zenv_1)}
\quad=\quad 0\,,
\]
where the first step follows from stationarity and the second because \(h(\Zenv)\) is a martingale. 
Consequently \(\Prob{h(\Zenv_1) = h(\Zenv_0)}=1\).

Secondly, using \(\Zenv\) to explore the network represented by \(\Pi\), 
we see that there is a \(\Pi\)-measurable random variable \(H = H(\Pi)\) such that \(h(\Zenv_n) = H(\Pi)\)
for all \(n\), 
for environment \(\Pi\).
Moreover \(H(\Pi)\) inherits similarity-invariance from \(\Zenv\).
It follows 
from the ergodicity of \(\Pi\) (Theorem \ref{thm:ergodic})
that  \(H(\Pi)\) must be non-random.
\end{proof}

This together with Theorem \ref{thm:KestenSpitzerWhitman} implies speed-neighborhood-recurrence for the \RRF \(Z\), as required.
It also shows that in non-critical cases 
the speed will either almost surely diverge to infinity 
or almost surely converge to zero.
Accordingly a version of Conjecture \ref{conj:complete-stop}
holds for the randomly-broken local \(\Pi\)-geodesics formed by a critical
\SIRSNRRF:
in the {critical case} \(\alpha=2(\gamma-1)\) 
the \RRF provides a ``randomly-broken 
local \(\Pi\)-geodesic'', which avoids slowing down to zero speed (or speeding up to infinite speed).

We conclude this section with a formal statement of the 
speed-neighborhood-recurrence result.
\begin{thm}\label{thm:nbd-recurrence}
 Let \(\Pi\) be a Poisson line \SIRSN or \SIRSN candidate 
 with parameter \(\gamma\geq2\). Then there exists 
 a (discrete-time) \SIRSNRRF \(Z\)
 on \(\Pi\) (an \RRF with similarity-invariant
 dynamics with zero defect) such that the speed process
 \(V\) (given by \(V_n=v(\Line_0)\) when \(Z_n=(\Line_-,\Line_0)\))
 almost surely returns infinitely often to any neighbourhood
 of \(V_0\).
\end{thm}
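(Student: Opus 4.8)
The plan is to specialise to the \emph{critical} exponent \(\alpha=2(\gamma-1)\) and then simply to assemble the structural results already established, so that the theorem emerges as essentially a corollary. First I would check admissibility of this choice: since \(\gamma\geq2\) we have \(\alpha-(\gamma-1)=\gamma-1\geq1>0\), so \(\alpha>\gamma-1\), and Definition~\ref{def:SIRSN-RRF} therefore supplies a genuine critical \SIRSNRRF \(Z\) on \(\Pi\), with associated speed process \(V\) (where \(V_n=v(\Line_0)\) when \(Z_n=(\Line_-,\Line_0)\)).

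Next I would start the relative environment process \(\Zenv\) in the stationary distribution furnished by Theorem~\ref{thm:stationarity}. The log-speed increments \(U_n=\log(V_n/V_{n-1})\) are, by construction, deterministic measurable functions of \(\Zenv_n\) alone (the relative speed of the current line with respect to the previous one being read off directly from the relative environment), and hence form a stationary sequence. Corollary~\ref{cor:log-rel-speed} identifies the one-dimensional law of each \(U_n\) as the symmetric Laplace density \eqref{eq:critical-equilibrium-density}; in particular \(\Expect{U_0}=0\) precisely because we are in the critical case.

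Then I would invoke ergodicity. Theorem~\ref{thm:ergodic-l-r-s} asserts that \(\Zenv\), started in its stationary distribution, is ergodic; since \((U_n)\) is a factor of \(\Zenv\), the sequence \((U_n)\) is itself stationary and ergodic with zero mean. This puts us exactly in the hypotheses of the real-valued Kesten--Spitzer--Whitman range theorem, Theorem~\ref{thm:KestenSpitzerWhitman}. Applying it to \((U_n)\) with partial sums \(W_n=U_1+\cdots+U_n=\log(V_n/V_0)\), I obtain, for each fixed \(\eps>0\), that almost surely \(|\log(V_n/V_0)|\leq\eps\) for infinitely many \(n\). Finally I would run \(\eps\) through a sequence decreasing to \(0\) and intersect the corresponding probability-one events, concluding that \(V_n\) returns infinitely often to every multiplicative, and hence to every ordinary, neighbourhood of \(V_0\) --- which is the assertion of the theorem.

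Since all of the genuinely difficult analysis --- the Palm/Dirichlet-form computation of the stationary distribution (Theorem~\ref{thm:stationarity}), the Kozlov-style proof of ergodicity (Theorem~\ref{thm:ergodic-l-r-s}), and the adaptation of the range theorem to continuous state space (Theorem~\ref{thm:KestenSpitzerWhitman}) --- has already been carried out, I anticipate no serious obstacle here. The only points demanding care are the inheritance of ergodicity by the factor sequence \((U_n)\) from the ergodic environment process \(\Zenv\), and the passage from the single-\(\eps\) conclusion of the range theorem to the ``every neighbourhood'' statement via a countable intersection over a null sequence of \(\eps\).
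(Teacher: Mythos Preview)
Your proposal is correct and follows essentially the same route as the paper's own proof: specialise to the critical index \(\alpha=2(\gamma-1)\), run the relative environment process in its stationary distribution (Theorem~\ref{thm:stationarity}), use ergodicity (Theorem~\ref{thm:ergodic-l-r-s}) together with the zero-mean property (Corollary~\ref{cor:log-rel-speed}) of the log-relative-speed increments, and conclude via the adapted Kesten--Spitzer--Whitman Theorem~\ref{thm:KestenSpitzerWhitman}. Your treatment is simply a slightly more detailed rendering of the paper's terse proof, including the explicit admissibility check \(\alpha>\gamma-1\) and the countable intersection over \(\eps\downarrow0\).
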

\begin{proof}
 Bearing in mind the characterization 
 (Theorem \ref{thm:similarity-invariance})
 of such \SIRSNRRF by index \(\alpha>\gamma-1\), 
 we choose the \SIRSNRRF with critical index \(\alpha=2(\gamma-1)\).
 Considering the relative environment process \(\Zenv\)
 run in stationarity, and noting that 
 in stationarity the mean log-relative 
 speed \(U=\log(V)\) has mean zero
 (Theorem \ref{thm:stationarity}),
 and forms an ergodic process
 (Theorem \ref{thm:ergodic-l-r-s}),
 the desired speed-neighborhood-recurrence result
 follows from the adapted Kesten-Spitzer-Whitman 
 Theorem \ref{thm:KestenSpitzerWhitman}.
\end{proof}

 \section{Conclusion}\label{sec:conclusion}
 
This paper 
defines and characterises \SIRSNRRF (similarity-equivariant 
discrete-time
Rayleigh random flights taking place on
scale invariant random spatial networks)
using
 an axiomatic approach to scattering processes.
It is
shown that the relative environment viewed from the
\SIRSNRRF is ergodic stationary,
and that there exists a critical \SIRSNRRF
whose speed process is neighborhood-recurrent.
We offer this as evidence 
in favour of Conjecture \ref{conj:complete-stop},
that \(\Pi\)-geodesics in a Poisson line \SIRSN never come to a complete
halt, and therefore can be constructed using doubly infinite
sequences of segments taken from the Poisson line \SIRSN.

We note that the abstract approach to scattering set out 
in Section \ref{sec:ASP} merits 
further exploration in its own right.

In conclusion, we briefly discuss some points going beyond
the question of speed-neighbourhood recurrence in the critical case.

A little more can be said concerning the two non-critical cases.
Bearing in mind Corollary \ref{cor:log-rel-speed},
if \(\alpha<2(\gamma-1)\), so that the log-relative-speed distribution 
of the next line relative to the current line
for the \SIRSNRRF has negative mean,
then ergodicity of the relative environment
means that
the \SIRSNRRF process itself must almost surely converge to a limiting random point as time tends to infinity. 
This is because its speed must tend to zero, and so
almost surely it must eventually get trapped in a cell of the proper Poisson line tessellation
formed by \(\Pi_{\geq\eps}\) (recall \(\Pi_{\geq\eps}\)
is the part of \(\Pi\) for which speeds are higher than some \(\eps>0\)).
The trapping occurs as \(\eps\to0\),
since \(\Pi_{\geq \eps}\) is a proper Poisson line tessellation, increasing monotonically as a random set
as \(\eps\to0\), with intensity tending to infinity
and with cells shrinking down to zero size.
Since the \SIRSNRRF has positive chance of not escaping
from
\(\Pi\setminus\Pi_{\geq\eps}\) 
onto \(\Pi_{\geq\eps}\)
for large time,
and has a positive chance of moving freely 
within the current connected component of 
\(\Pi\setminus\Pi_{\geq\eps}\)
(by considerations akin to those of the irreducibility
Corollary \ref{lem:irreducible}),
it follows that the limiting point of the \SIRSNRRF must indeed be random. We call this case the \emph{converging case}.

On the other hand, if \(\alpha>2(\gamma-1)\), then the log-relative-speed distribution of the next line 
relative to the current line for the \SIRSNRRF has positive mean,
and so the \SIRSNRRF process must almost surely diverge to infinity. 
This is because its speed must tend to infinity
(using again ergodicity of the relative environment), and so 
almost surely the process must get trapped on \(\Pi_{\geq v}\) for any \(v>0\).
The divergence occurs as \(v\to\infty\),
since \(\Pi_{\geq v}\) is a proper Poisson line tessellation, 
decreasing monotonically as a random set as \(v\to\infty\), with intensity tending to zero,
and therefore with cells blowing up to arbitrarily large 
size with cell boundaries almost surely being eventually
arbitrarily far from the origin \(\origin\).
 We call this case the \emph{diverging case}.

Whether the case is critical, diverging, or converging,
the discrete-time process is defined for all time
(since it will take an infinite number of jumps for the speed
to exceed all bounds, or to reduce to zero). 
Consequently the continuous-time
variant (defined by interpolating between scatterings using top-speed linear motion)
is defined for all time in the critical case \(\alpha=2(\gamma-1)\).
More generally,
under stationarity
consider 
Palm-conditioning on the current line at 
the \(n^\text{th}\) scattering instant.
The marginal distribution of the distance \(D_n\) travelled till next
scattering must be exponentially distributed,
because the 
pattern of speed-marked intersections on the current line is Poisson. 
Note that \(T_n=D_n/V_n\) is the time
till the next scattering.
Now
\(D_n/V_n^{\gamma-1}\)
is a function of the relative environment \(\Psi_n\)
(using the scaling transformation \eqref{eq:SIRSN-scaling}) and therefore forms an ergodic sequence.
Considering independent thinning of all lines
for which scattering fails,
one calculates the (conditioned) exponential rate 
of \(D_n/V_n^{\gamma-1}=T_n/V_n^{\gamma-2}\)
to be
\(\tfrac{\alpha}{\alpha-(\gamma-1)}\).
By the ergodic theorem it follows that
\(\tfrac{1}{n}\sum_{r=0}^n T_r/V_r^{\gamma-2}\)
converges almost surely to 
\(\tfrac{\alpha-(\gamma-1)}{\alpha}\).
Thus in the non-\SIRSN case of \(\gamma=2\)
it follows that scattering happens at a constant rate
in time, and thus the continuous process will be defined
for all time.
%
%

In the \SIRSN case of \(\gamma>2\), if the diverging case
holds (so \(\alpha>2(\gamma-1)\))
then \(V_n\) will eventually tend to \(\infty\)
at a geometric rate. Thus in that case
almost sure convergence of 
\(\tfrac{1}{n}\sum_{r=0}^n T_r/V_r^{\gamma-2}\)
to a positive constant
forces us to conclude that \(\sum_{r=0}^n T_r\)
diverges to \(\infty\), and therefore again
the continuous process will be defined
for all time.

In contrast, in
the converging case \(\alpha<2(\gamma-1)\)
a similar argument shows that 
the continuous-time process will reach zero-speed in finite time,
trading off the asymptotically linear decrease of the log-speed against
the asymptotically linear increase of
\(\sum_{r=0}^n T_r/V_r^{\gamma-2}\).

In the diverging case
\(\alpha>2(\gamma-1)\)
it is natural to ask whether 
the
(discrete or continuous time)
scattering process achieves a limiting direction as viewed from 
\(\origin\).
In fact it will not do so. This follows by an argument involving:
\begin{itemize}[(i)]
 \item an ergodic theorem for \(\Pi\) under scaling symmetries:
(this is proved in the same manner as Theorem \ref{thm:ergodic} 
but using the \(r\)-\(\theta\) coordinatization
used in Equation \eqref{eq:plp-sirsn-intensity} for the intensity measure \(\nu\) of \(\Pi\),
instead of the 
\(s\)-\(\phi\) parametrization used for Equation
\eqref{eq:plp-sirsn-intensity-alt});
\item a demonstration that if \(C_v\) 
is the zero-cell for \(\Pi_{\geq v}\)
(the \emph{Crofton cell} containing 
the origin for the corresponding tessellation)
then there is \(p>0\) such that if \(Z_0\) lies in \(C_v\)
then \(p\) is a lower bound for
the probability that Z makes a complete circuit of
\(\partial C_v\) when first hitting \(\partial C_v\).
(This is established by noting that by scaling it suffices to consider \(v=1\);
then the relevant probability
can be estimated by thinning \(\Pi_{< 1}\) such that
lines are only retained if they hit \(\partial C_1\)
and additionally will not scatter \(Z\) on the two occasions
when its circuit encounters the line in question.)
In fact  
\cite{Calka-2002} gives
distributional bounds on the out-radius of \(C_1\), though here we need only use the fact that \(C_1\) is 
stochastically bounded;
\item finally combining these two to show
\(\Prob{Z \text{ makes a circuit of } C_n \text{ for infinitely many }n}=1\). 
\end{itemize}
Since \(C_n\) will intersect any fixed line
for large enough \(n\), it follows that \(Z\) will eventually visit any fixed line, and therefore cannot be eventually 
confined within any wedge, and therefore cannot possess a limiting 
direction.

We conclude with some questions for further work.

In the critical case \(\alpha=2(\gamma-1)\)
it is an open question whether the
(discrete or continuous time)
process 
(as opposed to its speed)
is positive-recurrent on neighborhoods of the origin
\(\origin\). 
Note that simple arguments imply that
positive-recurrence on neighbourhoods would
force the conclusion that the process was \emph{point}-recurrent;
if \(Z\)
will always eventually return to a neighbourhood \(A\) of the origin 
then it may
(and therefore eventually will) 
move on to the intersection \(A\cap\Pi_{\geq v}\) between 
the neighbourhood and
the proper Poisson line process \(\Pi_{\geq v}\) (choosing the
positive speed \(v\) depending on \(\Pi\)
so that \(A\cap\Pi_{\geq v}\neq\emptyset\)); irreducibility (Lemma \ref{lem:irreducible}) then implies 
that \(Z\) has 
positive chance of visiting 
any specified point on 
\(A\cap\Pi_{\geq v}\), and 
therefore will succeed in doing so eventually 
after repeated
visits to \(A\cap\Pi_{\geq v}\).
However there is some evidence that in fact
\(Z\) is transient:
\(Z\) is caricatured by the two-dimensional integrated Brownian motion
\(\int V{\d}s\) (for \(V\) a \(2\)-dimensional Brownian motion),
which can be shown almost surely to have only finite total length of
path within any bounded neighbourhood of its starting point,
and thus to be transient in the sense of almost surely 
eventually leaving this starting point never to return.

It is natural to ask whether some kind of central limit behaviour
can be established. 
Certainly this question makes sense for the log-speed process,
as this is produced by partial sums of the stationary
ergodic process of log-relative speeds. 
We leave this question to further work, but note that
the approach to this will depend a great deal on whether or not
the scattering process itself is point-recurrent.

Central limit behaviour for the scattering process itself
is complicated by the fact that in the true \SIRSN case \(\gamma>2\)
the times between scattering have statistics depending monotonically
on the current speed (see remarks earlier in this section).
However it may be possible to formulate the process as
being approximated by a constructed process
based on a Brownian motion,
using the coupling techniques of \citet{KendallWestcott-1987}.

It has been noted that the \SIRSNRRF defined here
cannot exist on high-dimensional \SIRSN (with \(\gamma>d>2\)),
for the simple reason that lines of Poisson line processes
in space of dimension \(3\) or higher will
almost surely never intersect.
However it \emph{does} make sense to ask whether this construction
can be generalized to line patterns in \(3\)-space
formed by a Poisson process of planes. 
However
it would first be necessary to extend the results
of \cite{Kendall-2014c} and \citet{Kahn-2015} to this situation.
Finally, it would be an interesting exercise to 
establish similar results 
for Rayleigh random flights on
\citeauthor{Aldous-2012}'s 
(\citeyear{Aldous-2012})
binary hierarchy \SIRSN.

\bigskip
\noindent \textbf{Acknowledgements.} 
The author acknowledges the support of 
the Isaac Newton Institute for Mathematical Sciences, Cambridge, 
under EPSRC grant EP/K032208 (``Random Geometry'' programme),
by the Alan Turing Institute under EPSRC grant EP/N510129,
and also by EPSRC grants EP/K013939 and EP/R022100 for the author's research.

This is a theoretical research paper and, as such, no new data were created during this study.

   \bibliographystyle{plainnat}
   \bibliography{SIRSN-RRF}

%

\end{document}